\numberwithin{equation}{section}
\newtheorem{theorem}{Theorem}[section]
\newtheorem{lemma}[theorem]{Lemma}
\newtheorem{proposition}[theorem]{Proposition}
\newtheorem{rem}[theorem]{Remark}
\renewcommand{\ge}{\geq}
\renewcommand{\le}{\leq}
\newcommand{\ind}{\mathbf{1}}
\renewcommand{\tilde}{\widetilde}
\newcommand{\grad}{\nabla}
\newcommand{\cA}{{\ensuremath{\mathcal A}} }
\newcommand{\cF}{{\ensuremath{\mathcal F}} }
\newcommand{\cG}{{\ensuremath{\mathcal G}} }
\newcommand{\cE}{{\ensuremath{\mathcal E}} }
\newcommand{\cH}{{\ensuremath{\mathcal H}} }
\newcommand{\cC}{{\ensuremath{\mathcal C}} }
\newcommand{\cL}{{\ensuremath{\mathcal L}} }
\newcommand{\cT}{{\ensuremath{\mathcal T}} }
\newcommand{\cB}{{\ensuremath{\mathcal B}} }
\DeclareMathSymbol{\leqslant}{\mathalpha}{AMSa}{"36} % nicer `smaller or equal'
\DeclareMathSymbol{\geqslant}{\mathalpha}{AMSa}{"3E} % nicer `larger or equal'
\DeclareMathSymbol{\eset}{\mathalpha}{AMSb}{"3F}     % nicer `emptyset'
\renewcommand{\leq}{\;\leqslant\;}                   % redef. of < or =
\renewcommand{\geq}{\;\geqslant\;}                   % redef. of > or =
\newcommand{\dd}{\,\text{\rm d}}             % a straight d for differentials
\newcommand{\sumtwo}[2]{\sum_{\substack{#1 \\ #2}}} % sum with 2 lines
\newcommand{\bbE}{{\ensuremath{\mathbb E}} }
\newcommand{\bbN}{{\ensuremath{\mathbb N}} }
\newcommand{\bbP}{{\ensuremath{\mathbb P}} }
\newcommand{\bbR}{{\ensuremath{\mathbb R}} }
\newcommand{\bbZ}{{\ensuremath{\mathbb Z}} }
\newcommand{\kA}{{\ensuremath{\mathfrak A}} }
\newcommand{\gb}{\beta}
\newcommand{\gep}{\varepsilon}       % \ge already exists...
\newcommand{\gD}{\Delta}
\newcommand{\go}{\omega}
\newcommand{\gO}{\Omega}
\newcommand{\gl}{\lambda}
\def\captionfont@{\footnotesize}
\def\captionheadfont@{\scshape}
\long\def\@makecaption#1#2{%
  \vspace{2mm}
  \setbox\@tempboxa\vbox{\color@setgroup
    \advance\hsize-6pc\noindent
    \captionfont@\captionheadfont@#1\@xp\@ifnotempty\@xp
        {\@cdr#2\@nil}{.\captionfont@\upshape\enspace#2}%
    \unskip\kern-6pc\par
    \global\setbox\@ne\lastbox\color@endgroup}%
  \ifhbox\@ne % the normal case
    \setbox\@ne\hbox{\unhbox\@ne\unskip\unskip\unpenalty\unkern}%
  \fi
  \ifdim\wd\@tempboxa=\z@ % this means caption will fit on one line
    \setbox\@ne\hbox to\columnwidth{\hss\kern-6pc\box\@ne\hss}%
  \else % tempboxa contained more than one line
    \setbox\@ne\vbox{\unvbox\@tempboxa\parskip\z@skip
        \noindent\unhbox\@ne\advance\hsize-6pc\par}%
\fi
  \ifnum\@tempcnta<64 % if the float IS a figure...
    \addvspace\abovecaptionskip
    \moveright 3pc\box\@ne
  \else % if the float IS NOT a figure...
    \moveright 3pc\box\@ne
    \nobreak
    \vskip\belowcaptionskip
  \fi
\relax
}
\def\writefig#1 #2 #3 {\rlap{\kern #1 truecm
\raise #2 truecm \hbox{#3}}}
\newcommand{\bcos}{{\overline \cos}}
\newcommand{\var}{{\rm Var}}
\newcommand{\Tm}{T_{\rm mix}}
\title[Exclusion on the circle]
{The Simple Exclusion Process on the Circle has a diffusive Cutoff Window}
\address{IMPA, Estrada Dona Castorina 110,
RJ-22460-320 Rio de Janeiro, Brasil}
\email{lacoin@impa.br}
\author{Hubert Lacoin}
\begin{document}

\begin{abstract}
In this paper, we investigate the mixing time of the simple exclusion process on the circle with $N$ sites, with a number of particle $k(N)$ 
tending to infinity, both from the worst initial condition and from a typical initial condition. 
We show that the worst-case mixing time is asymptotically equivalent to $(8\pi^2)^{-1}N^2\log k$, 
while the cutoff window, is identified to be $N^2$.
Starting from a typical condition, we show that there is no cutoff and that the mixing time is of order $N^2$. \\
{\em Keywords: Markov chains, Mixing time, Particle systems, Cutoff Window}
\end{abstract}

\maketitle

 \tableofcontents

\section{Introduction}

The Symmetric Simple Exclusion Process (to which we sometimes refer simply as \textit{the  Simple Exclusion}) is one of the simplest particle system with local 
interactions. It can be considered as a toy model for the  relaxation of a gas of particle and was introduced by Spitzer in \cite{cf:Spitzer}. 
Since then, it has been the object of
a large number of studies by mathematicians and theoretical physicists, who investigated many of its properties such as
the evolution rules for the particle density and tried to derive Fick's law from microscopic dynamics,
studied to motion of an individual tagged particle (see \cite{cf:Liggett2, cf:KL} for reviews on the subject and references therein).
More recently \cite{cf:DSC0, cf:DSC, cf:Lac, cf:LeeYau, cf:Quas, cf:Yau} an interest has been developed for the convergence to equilibrium of the process on a finite graph in terms of mixing time, which is the object of our study.

\subsection{The Process}

We  consider $\bbZ_N:=\bbZ/N\bbZ$, the discrete circle with $N$ sites and place $k\in\{1,\dots,N-1\}$ particles
on it, with \textit{at most} one particle per site. With a slight abuse of notation, we  sometimes use elements of $\{1,\dots,N\} \subset \bbZ$ to refer to elements of $\bbZ_N$.

\medskip

The Simple Exclusion on $\bbZ_N$ is a dynamical evolution of the particle system which can be described informally as follows: 
each particle tries to jump independently on its neighbors with transition rates $p(x,x+1)=p(x,x-1)=1$, but the jumps are cancelled if
a particle tries to jump on a site which is already occupied (see Figure \ref{partisys} in Section \ref{fluctuat} for a graphical representation).

\medskip

More formally, our state-space is defined by
\begin{equation}
\gO= \gO_{N,k}= \left\{ \eta \in \{0,1\}^{\bbZ_N}\ | \ \sum_{x=1}^N \eta(x)= k \right\}.
\end{equation}
Given $\eta\in \gO$ define $\eta^x$ the configuration obtained by exchanging the content of site $x$ and $x+1$
\begin{equation}
\begin{cases}
\eta^x(x):=\eta(x+1),\\
\eta^x(x+1):=\eta(x),\\
\eta^x(y)=\eta(y), \quad \forall y\notin\{x,x+1\}.
\end{cases}
\end{equation}

The exclusion process on $\bbZ_N$ with $k$ particle is the continuous time Markov process on $\gO_{N,k}$ whose generator is given by
\begin{equation}\label{crading}
 (\mathcal L f)(\eta):=\sum_{x\in \bbZ_N} f(\eta^x)-f(\eta).
\end{equation}
The unique probability measure left invariant by $\mathcal L$ is the uniform probability measure on $\gO$ which we denote by $\mu$.
Given $\chi \in \gO,$ we let $(\eta^{\chi}_t)_{t\ge 0}$ denote the trajectory of the Markov chain starting from $\chi$.

\medskip

We want to know how long we must wait to reach the equilibrium state of the particle system, for which all configurations are equally likely.
We measure the distance to equilibrium is measured in terms of total variation distance.
If $\alpha$ and $\beta$ are two probability measures on $\gO$, the total variation distance between $\alpha$ and $\gb$ is defined to be
  \begin{equation}\label{tv}
  \| \alpha -\beta\|_{TV}:=\frac{1}{2}\sum_{\go\in \gO} |\alpha(\go)-\beta(\go)|=\sum_{\go\in \gO} (\alpha(\go)-\beta(\go))_+,
 \end{equation}
 where $x_+=\max(x,0)$ is the positive part of $x$. It measures how well one can couple two variables with law $\alpha$ and $\beta$.
 We define the worst-case distance to equilibrium  at time $t>0$ as follows
\begin{equation}\label{dat}
d(t)=d^{N,k}(t):=\max_{\chi\in \gO_{N,k}} \| P^\chi_t-\mu\|_{TV}.
\end{equation}
Similarly we define the typical distance from equilibrium at time $t>0$ as
\begin{equation}
 {\bf d} (t)={\bf d}^{N,k}(t):= \frac{1}{\# \gO_{N,k}} \sum_{\chi\in \gO_{N,k}} \| P^\chi_t-\mu\|_{TV}.
\end{equation}
For a given $\gep>0$ we define the $\gep$-mixing-time (or simply the mixing time when $\gep=1/4$) 
to be the time needed for the system to be at distance $\gep$ from equilibrium 
\begin{equation}
 \Tm^{N,k}(\gep):=\inf\{t\ge 0\  | \ d^{N,k}(t)\le \gep\}.
\end{equation}
Let us mention that the convergence to equilibrium has also been studied in terms of asymptotic rates: 
it has been known for a long time (see e.g.\ \cite[Corollary 12.6]{cf:LPW}) that for any reversible Markov chain
\begin{equation}
\lim_{t\to \infty}t^{-1}\log d(t) =  \lim_{t\to \infty} t^{-1}{\bf d} (t)=-\gl_1.
\end{equation}
exists and that $\gl_1>0$ is the smallest nonzero eigenvalue of $-\mathcal L$,
usually referred to as the spectral gap. 
 Note that the knowledge of the spectral-gap also give an information on $d(t)$ for finite $t$, as we have (cf.\ \cite{cf:LPW}[Theorem 12.3])
\begin{equation}\label{sgapin}
 \frac{1}{2} e^{-\gl_1 t} \le d(t)\le |\gO|^{-1}e^{-\gl_1 t}.
\end{equation}

\medskip

The exclusion process can in fact be defined on an arbitrary graph and its mixing property have been the object of a large number of works.
Let us mention a few of them here.
Let us start with the mean-field case: in \cite{cf:DS2}, the study of the exclusion on the complete graph with $N/2$ particles is reduced to the study of the birth and death chain and a sharp asymptotic 
for the mixing time is given
using a purely algebraic approach (see also \cite{cf:LL} for a probabilistic approach of the problem for arbitrary $k$).

\medskip

The problem on the lattice is much more delicate. Let us mention a few results that were obtained on the torus $(\bbZ_N)^d$:
in \cite{cf:Quas} (and  also independently in \cite{cf:DSC0}), comparisons with the mean-field model were used 
to prove that  $\gl_1:=O(N^{-2})$, and thus via \eqref{sgapin} that there exists a constant $C_d$ such that
\begin{equation}\label{gapbound}
 \Tm\le C_d N^2 \log \binom{N^d}{k}.
\end{equation}

\medskip

In \cite{cf:DSC, cf:LeeYau, cf:Yau}, the related problem of the  \textit{log-Sobolev constant} for the process was studied. 
In particular, in \cite{cf:Yau}, a sharp bound (up to multiplicative constant) on the log-Sobolev constant was proved for 
the exclusion process on the grid which allowed to improve \eqref{gapbound} into
\begin{equation}
  \Tm\le C_d N^2 \log\log \binom{N^d}{k}.
\end{equation}
In \cite{cf:Mor}, using the \textit{chameleon process}, this upper bound is improved in the case of small $k$ by showing that 
\begin{equation}
   \Tm\le C (\log d) N^2 \log k,
\end{equation}
(see also \cite{cf:RO} where the technique is extended to obtain estimates on the mixing time for arbitrary graphs in terms of the mixing time of a single particle).

\medskip

In another direction: in \cite{cf:CLR}, it is shown that the spectral gap for the simple-exclusion on any graph is equal to that of the underlying simple random walk 
(e.g.\ in our case $\gl_1=2(1-\cos(2\pi/N))$).

Finally concerning the case of dimension $1$: in \cite{cf:Wilson}, the mixing time of the exclusion process on the segment is proved to be larger than 
$(2\pi^2)^{-1}N^2 \log k$ and smaller than  $(\pi^2)^{-1}N^2 \log k$,
with the conjecture that the lower bound is sharp. This conjecture was proved in \cite{cf:Lac}.

\subsection{The main result}

The first result of this paper is a sharp asymptotic for the mixing time of the exclusion process on the circle $\bbZ_N$.
For a fixed $\gep\in(0,1)$, when $N$ and $k$ goes to infinity we are able to identify the asymptotic behavior of $\Tm(\gep)$.
We obtain that when $k\le N/2$ (which by symmetry is not a restriction)

$$\Tm(\gep)=\frac{N^2}{8\pi^2}(\log k)(1+o(1)).$$

Note that here the dependence in $\gep$ is not present in the asymptotic equivalent.
This means that on a time window which is $o(N^2\log k)$ the distance to equilibrium drops abruptly from $1$ to $0$.
This sudden collapse to equilibrium for a Markov chain was first observed by Diaconis and Shahshahani \cite{cf:DS} in the case of the (mean-field) transposition shuffle (see also \cite{cf:Aldous} for the random walk on the hypercube). 
The term cutoff itself was coined in \cite{cf:AD} and the phenomenon has since been proved to hold for a diversity of
Markov chains (see e.g.\ \cite{cf:LS, cf:LS3} 
for some recent celebrated papers proving cutoffs). It is believed that cutoff holds with some generality for reversible Markov chains
as soon as the mixing time is much larger than the inverse of the spectral gap,
but this remains a very challenging conjecture (see \cite[Chapter 18]{cf:LPW} for an introduction to cutoff and some 
counterexamples and \cite{cf:bdcutoff, cf:Chen, cf:Basu} for recent progress on that conjecture).

\medskip

A natural question is then of course: ``On what time scale does $d(t)$ decrease from, say, $999/1000$ to $1/1000$ ?'' .
This is what is called the cutoff window. We are able to show it is equal to $N^2$.
Let us mention that, this is result is, to our knowledge, 
the first sharp derivation of a cutoff window for a lattice interacting particle system.

\begin{theorem}\label{mainres}
For any sequence $k(N)$ satisfying $k(N)\le N/2$ and tending to infinity.
We have for every $\gep\in (0,1)$
\begin{equation}\label{cutoff}
\lim_{N\to \infty} \frac{8\pi^2\Tm^{N,k}(\gep)}{N^2\log k}=1.
\end{equation}
More precisely we have

\begin{equation}\label{damix}\begin{split}
\lim_{s\to \infty} &\limsup_{N\to \infty} d^{N,k} \left( (8\pi^2)^{-1}N^2\log k+sN^2 \right)=0,\\
\lim_{s\to -\infty} &\liminf_{N\to \infty}  d^{N,k} \left( (8\pi^2)^{-1}N^2\log k+sN^2 \right)=1
\end{split}\end{equation}
and the window is optimal in the sense that for any $u\in \bbR$ 
\begin{equation}\label{damix2}\begin{split}
&\limsup_{N\to \infty}  d^{N,k} \left( (8\pi^2)^{-1}N^2\log k+uN^2 \right)<1,\\
&\liminf_{N\to \infty}  d^{N,k} \left( (8\pi^2)^{-1}N^2\log k+uN^2 \right) >0.
\end{split}\end{equation}
\end{theorem}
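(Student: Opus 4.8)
The plan is to establish \eqref{damix} and \eqref{damix2} via matching upper and lower bounds, both centered at $t_N(s) := (8\pi^2)^{-1}N^2\log k + sN^2$, using the height-function (or ``second-class particle''/current) representation of the exclusion process and the comparison with the simple random walk eigenvalue $\gl_1 = 2(1-\cos(2\pi/N)) \sim 4\pi^2/N^2$. For the lower bound, I would construct an explicit distinguishing statistic. The natural candidate is a suitable Fourier mode of the particle configuration: set $\Phi(\eta) := \sum_{x\in\bbZ_N} \eta(x) e^{2\pi i x/N}$ (or rather its real/imaginary part adapted to the initial condition). Under the dynamics, $\E[\Phi(\eta_t^\chi)]$ decays like $e^{-\gl_1 t}$, so at time $t_N(s)$ its modulus is of order $k^{1/2} e^{-4\pi^2 s}$ when started from a configuration maximizing $|\Phi|$ (particles clustered on an arc of length $\sim k$, giving $|\Phi(\chi)| \asymp k$... wait, one must be careful: a clustered arc of $k$ sites gives $|\Phi| \asymp \min(k, N/\pi)$, but for $k \le N/2$ the relevant worst case gives $|\Phi(\chi)|$ of order $k$ when $k \ll N$, and one should instead track the decay so that at the cutoff time the signal is $\Theta(\sqrt k \cdot e^{-4\pi^2 s})$). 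The point is to choose the normalization so that under $\mu$ the variable $\Phi$ has fluctuations of order $\sqrt{k}$ (a CLT/variance computation for $\eta$ sampled uniformly from $\gO_{N,k}$), while started from the worst $\chi$ its mean at time $t_N(s)$ exceeds this by a factor growing as $s\to-\infty$; a second-moment/Chebyshev argument then forces $\|P_t^\chi - \mu\|_{TV}$ close to $1$, and staying bounded below for each fixed $u$. The variance of $\Phi(\eta_t^\chi)$ under the dynamics must be controlled uniformly — here one uses the negative association of the exclusion measure and the explicit two-point function decay, which is the technically delicate part of the lower bound.

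For the upper bound I would use the strategy of Lacoin \cite{cf:Lac} adapted to the circle: couple two copies of the chain, one from an arbitrary $\chi$ and one from stationarity $\mu$, and estimate the coupling time. The key tool is the monotone (graphical) coupling together with the observation that on the circle the ordered particle positions, read off as a height function, perform a system of coalescing/reflecting one-dimensional walks whose discrepancies are governed by a single random walk's mixing. Concretely, one shows that after time of order $(8\pi^2)^{-1}N^2\log k + sN^2$ with $s$ large, the expected number of discrepancies between the two coupled configurations is $o(1)$; this uses the $L^2$ (Wilson-type) bound, i.e.\ the eigenfunction $\sum_x \eta(x)\cos(2\pi x/N)$ and its decay rate $\gl_1$, combined with the fact that higher Fourier modes decay faster, so that the sum over all $N$ modes of $e^{-\gl_j t}$ contributes the $\log k$ (not $\log N$) because only $\sim k$ modes carry weight $k$ rather than weight $N$ — this is exactly the mechanism producing the constant $(8\pi^2)^{-1}$ rather than $(2\pi^2)^{-1}$. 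I would make this precise by a censoring/monotonicity argument reducing to the segment and invoking the sharp result on the segment from \cite{cf:Lac}, then transferring it to the circle via a symmetrization.

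The delicate window statements \eqref{damix}–\eqref{damix2} require more than the leading order: one needs that the $L^2$ distance at time $t_N(s)$ is, up to lower-order corrections, governed by the single top mode, i.e.\ $d(t_N(s)) \le \tfrac12\sqrt{\sum_j e^{-2\gl_j t_N(s)}} \approx \tfrac12 e^{-\gl_1 t_N(s)}\sqrt{\text{multiplicity}}$, and that this quantity is $\Theta(e^{-4\pi^2 s})$ — finite and strictly between $0$ and $1$ for each fixed $s\in\bbR$, tending to $0$ as $s\to+\infty$. For the matching lower direction one refines the Chebyshev argument: with $W_t := \Phi(\eta_t^\chi)$ one has $\E[W_{t_N(s)}] \asymp \sqrt k\, e^{-4\pi^2 s}$ and $\Var(W_{t_N(s)}) = O(k)$ uniformly in $s$, so that for fixed $u$ there is $\delta(u) > 0$ with $\|P_{t_N(u)}^\chi - \mu\|_{TV} \ge \delta(u)$, and $\delta(u)\to 1$ as $u\to -\infty$. \textbf{The main obstacle} I anticipate is obtaining the variance bound $\Var(W_t) = O(k)$ that is uniform over the whole window and over the sequence $k(N)$: this amounts to controlling the exclusion two-point correlation $\E[\eta_t(x)\eta_t(y)] - \E[\eta_t(x)]\E[\eta_t(y)]$ for all $t$ of order $N^2\log k$, which does not follow from the spectral gap alone and requires either the hydrodynamic/duality structure of the exclusion process (the correlation function solves a closed discrete heat-type equation on pairs) or the negative-association inequalities together with a careful entrance-law estimate near the diagonal. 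Establishing that this correlation sum stays $O(k)$ uniformly, rather than blowing up logarithmically, is where the real work lies; everything else is a combination of \eqref{sgapin}, the eigenvalue identity from \cite{cf:CLR}, and the monotone coupling machinery.
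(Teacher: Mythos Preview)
Your lower-bound sketch is essentially the paper's argument, but you have misidentified the difficulty. The variance bound $\Var(\Phi(\eta_t^\chi)) = O(k)$ does \emph{not} require two-point correlation control or negative association: since $e^{\gl_1 t}\Phi(\eta_t)$ is a martingale whose jumps have size at most $e^{\gl_1 t}\cdot 2\pi/N$ and occur at total rate at most $2k$, the predictable bracket satisfies $\langle M\rangle_t \le 2k\int_0^t e^{2\gl_1 s}(2\pi/N)^2\,ds$, and rescaling gives $\Var(\Phi(\eta_t^\chi)) \le 4\pi^2 k/(N^2\gl_1) \le 2k$ uniformly in $t$ and $\chi$. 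This is a three-line computation, not the main obstacle.

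The genuine gap is in your upper bound. Your proposal to ``reduce to the segment via censoring/monotonicity and transfer by symmetrization'' does not work, and the paper explicitly explains why (Section~1.4): the natural partial order \eqref{order} on which the Peres--Winkler censoring inequality and the argument of \cite{cf:Lac} rest has no analogue on the circle --- it breaks down completely once jumps between sites $1$ and $N$ are allowed. There is also no ``symmetrization'' that converts the segment constant $(2\pi^2)^{-1}$ into the circle constant $(8\pi^2)^{-1}$. Your alternative $L^2$ route is equally problematic: the generic bound $d(t)\le \tfrac12\bigl(\sum_j e^{-2\gl_j t}\bigr)^{1/2}$ involves the full spectrum of $\cL$ on $\gO_{N,k}$, not just the one-particle modes, and your heuristic that ``only $\sim k$ modes carry weight $k$'' is not a proof; without it you recover only $\log\binom{N}{k}$, not $\log k$, and you lose the window.

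The paper's upper bound is structurally different from anything you propose. It proceeds in two steps. First (Proposition~\ref{smallfluctu}), one shows that at time $t_A = (8\pi^2)^{-1}N^2\log k - AN^2$ the empirical density of $\eta_t^\chi$ has fluctuations of order $\sqrt{k}$ on every sub-interval; the key input here is Liggett's comparison inequality (Proposition~\ref{ligett}), which dominates symmetric positive-definite observables of the exclusion process by the same observables for $k$ \emph{independent} random walks, reducing the Laplace-transform estimate to a trivial product. Second (Proposition~\ref{csdf}), one shows that any configuration with $O(\sqrt{k})$ fluctuations couples with equilibrium in time $O(N^2)$: this uses the height-function (corner-flip) representation, but with a coupling that is \emph{not} Markovian for the particle system and does not rely on a global order --- one sandwiches $\xi^0$ between two vertically translated equilibrium copies $\xi^1,\xi^2$ and shows, via a multiscale analysis of the area $A(t)=\sum_x(\xi^2_t-\xi^1_t)(x)$ (a time-changed simple random walk), that $\xi^1$ and $\xi^2$ merge in time $O(N^2)$. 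Neither of these ingredients appears in your sketch, and both are essential.
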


\begin{rem}
The result above can be reformulated in the following manner: 
\begin{equation}\begin{cases}
 &\Tm= \Tm(1/4)= (8\pi^2)^{-1}N^2\log k+O(N^2),\\
 &\forall \gep\in (0,1), \quad  \limsup_{N\to \infty} \frac{|\Tm(\gep)- \Tm|}{N^2}<\infty,\\
 &\lim_{\gep \to 0} \liminf_{N \to \infty} \frac{\Tm(\gep)- \Tm}{N^2}= \lim_{\gep \to 0} \liminf_{N \to \infty} \frac{\Tm-\Tm(1-\gep)}{N^2}=\infty.
\end{cases}\end{equation}
The second line states that the cutoff window is at most $N^2$ while the third implies not only that this is sharp, but also that one the time scale $N^2$,
the ``cutoff profile'' has infinite support in both directions.
\end{rem}

\begin{rem}
Our result does not cover the case of a bounded number of particles. In this case there is no cutoff and the mixing time is of order $N^2$ for every $\gep$
with a pre-factor which depends on $\gep$ (a behavior very similar to the random-walk: case $k=1$).
\end{rem}

We also show that on the other-hand that starting from a typical configuration, the relaxation to equilibrium is not abrupt and occurs on the time-scale $N^2$.

\begin{theorem}\label{secondres}
For any sequence $k(N)$ satisfying $k(N)\le N/2$ and tending to infinity, we have for all $u>0$
\begin{equation}
 0< \liminf_{N\to \infty}  {\bf d}^{N,k}(N^2u) \le \limsup_{N\to \infty} {\bf d}(N^2u)<1
\end{equation}
and 
\begin{equation}
\lim_{s\to \infty} \limsup_{N\to \infty} {\bf d}^{N,k}(N^2s)=0.
\end{equation}
\end{theorem}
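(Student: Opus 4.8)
The plan is to feed a $\mu$‑typical initial configuration into the machinery behind Theorem~\ref{mainres}. The point is that the only slowly relaxing coordinates are the lowest Fourier modes of the particle density, and for a typical $\chi$ these already sit at their equilibrium size $\sqrt{k}$, rather than at size $k$ as in the worst case; consequently relaxation costs only time $O(N^2)$ instead of $(8\pi^2)^{-1}N^2\log k$.

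\textbf{Lower bounds.} Let $\psi(x)=\cos(2\pi x/N)$ and $\Phi(\eta)=\sum_{x\in\bbZ_N}\psi(x)\eta(x)$. A direct computation shows $\mathcal L\Phi=-\lambda_1\Phi$ with $\lambda_1=2-2\cos(2\pi/N)$, hence $\E[\Phi(\eta^\chi_t)]=e^{-\lambda_1 t}\Phi(\chi)$, with $\lambda_1 t\to4\pi^2 u$ when $t=uN^2$; moreover $\E_\mu[\Phi]=0$ and $\Var_\mu(\Phi)=\tfrac{k(N-k)}{2(N-1)}\in[k/4,k]$ for $k\le N/2$. From the identity $\mathcal L(\Phi^2)=-2\lambda_1\Phi^2+\sum_x(\psi(x+1)-\psi(x))^2(\eta(x)-\eta(x+1))^2$, the bounds $(\psi(x+1)-\psi(x))^2\le 4\pi^2 N^{-2}$, $(\eta(x)-\eta(x+1))^2\le\eta(x)+\eta(x+1)$ and $\sum_x(\eta(x)+\eta(x+1))=2k$, together with Grönwall's inequality, one gets $\Var_{P^\chi_t}(\Phi)\le C_1 k$ uniformly in $t\ge0$ and in $\chi\in\gO$. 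Applying Chebyshev's inequality under $P^\chi_t$ and under $\mu$ to the event $\{\sign(\Phi(\chi))\,\Phi\ge\tfrac12 e^{-\lambda_1 t}|\Phi(\chi)|\}$ (Wilson's method), one obtains a constant $C(u)$ such that, for $N$ large, $\|P^\chi_{uN^2}-\mu\|_{TV}\ge\tfrac12$ whenever $\Phi(\chi)^2\ge C(u)k$. Averaging over $\chi$ gives $\mathbf{d}^{N,k}(uN^2)\ge\tfrac12\,\mu(\Phi^2\ge C(u)k)$, and since $\Phi/\sqrt{\Var_\mu(\Phi)}$ satisfies a central limit theorem under $\mu$ (the classical CLT for a linear statistic of sampling without replacement, valid here because $k$ and $N-k$ tend to infinity and $\Var_\mu(\Phi)\to\infty$ with uniformly bounded normalized moments), the right‑hand side is bounded below uniformly in $N$. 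Hence $\liminf_N\mathbf{d}^{N,k}(uN^2)>0$ for every $u>0$.

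\textbf{Upper bounds.} I would run two copies $(\eta^\chi_t)_{t\ge0}$ and $(\eta^{\chi'}_t)_{t\ge0}$ with $\chi,\chi'$ independent and $\mu$‑distributed, under the grand coupling constructed for Theorem~\ref{mainres}; then $\mathbf{d}^{N,k}(t)\le\E_{\chi,\chi'}\bP(\eta^\chi_t\neq\eta^{\chi'}_t)$, so it is enough to bound the coupling time $\tau=\inf\{t:\eta^\chi_t=\eta^{\chi'}_t\}$. Under the coupling the discrepancies of the two configurations form a system of exclusion‑interacting, mutually annihilating walkers whose number is non‑increasing; the essential point is that for two \emph{independent uniform} configurations this system starts from of order $k$ walkers \emph{uniformly spread} over $\bbZ_N$ (density of order $k/N$), as opposed to clustered in the extremal case. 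Using the order $t^{-1/2}$ decay of the density of annihilating walkers — a second‑moment estimate on the pair‑correlation of the discrepancy set shows that by a time of order $N^2$ only a bounded number of walkers survive with high probability — together with the spectral‑gap tail $e^{-c\lambda_1 t}$ on the meeting time of the remaining walkers, one obtains $\bP(\tau>sN^2)\le h(s)$ with $h$ independent of $N$ and $k$ and $h(s)\to0$ as $s\to\infty$; this gives the last display of Theorem~\ref{secondres}. Combined with the complementary fact that $\bP(\tau\le uN^2)$ stays bounded away from $0$ uniformly in $N,k$ for each fixed $u>0$, one gets $\limsup_N\mathbf{d}^{N,k}(uN^2)<1$. (If the proof of Theorem~\ref{mainres} is organized so as to produce a quantitative bound on $\|P^\chi_t-\mu\|_{TV}$ valid for all $\chi$, one may alternatively just insert a typical $\chi$, for which $|\Phi(\chi)|\asymp\sqrt k$, into that bound.)

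\textbf{Main obstacle.} The hard part is the coupling‑time control from a typical pair of configurations: one must show that order‑$k$, uniformly spread discrepancies are all removed by time $O(N^2)$, with a tail in the rescaled time that is uniform in both $N$ and $k$ — so that, in particular, no factor $\log k$ re‑enters through the density decay of the exclusion‑interacting annihilating walkers — and, for the bound $\limsup_N\mathbf{d}^{N,k}(uN^2)<1$, that already by any fixed time $uN^2$ there is a uniformly positive chance of complete coupling. Carrying this out on the circle, where the monotone coupling is considerably less transparent than on the segment, is precisely where one leans on the constructions developed for Theorem~\ref{mainres}; by contrast the CLT and anticoncentration for $\Phi$ under $\mu$ needed for the lower bound are routine.
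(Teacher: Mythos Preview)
Your lower bound is essentially the paper's: Wilson's method applied to the first Fourier coefficient $a_1$, then averaged over $\chi$. The paper uses a fourth-moment bound $\mu[a_1^4]\le Ck^2$ to get $\liminf_N\mu(A^N_\delta)>0$ where you invoke the CLT; both are routine and give the same conclusion.

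Your upper bound, however, diverges from the paper and contains a real gap. You propose the basic coupling of two particle configurations and argue via the annihilating second-class particle picture, appealing to a $t^{-1/2}$ density decay and a ``second-moment estimate on the pair-correlation''. None of this is made rigorous, and obtaining such bounds on the circle uniformly in both $N$ and $k$ (so that no $\log k$ reappears) is precisely the difficulty; it is not a standard fact one can just cite. The paper does \emph{not} use this mechanism at all. Instead it works with the height-function (corner-flip) representation: a $\mu$-typical $\chi$ lies, with high probability, in the set
\[
\mathcal G_s=\Big\{\eta:\ \max_{x,y}\Big|\sum_{z=x+1}^y\big(\eta(z)-\tfrac kN\big)\Big|\le s\sqrt k\Big\},
\]
and Proposition~\ref{csdf} shows that from any $\chi\in\mathcal G_s$ one couples with equilibrium in time $O(N^2)$, by sandwiching the height function $\xi^0$ between two vertical translates $\xi^1,\xi^2$ of an equilibrium height function and proving the enclosed area (a time-changed simple random walk) hits $0$ in time $O(N^2)$ via a multi-scale control of the flip rate $u(t)$. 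Note that the relevant ``typicality'' is control of \emph{all} partial sums, not merely $|\Phi(\chi)|\asymp\sqrt k$; your parenthetical alternative (``insert a typical $\chi$ into the bound from Theorem~\ref{mainres}'') points in the right direction but with the wrong statistic.

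In short: the lower bound is fine; for the upper bound you should abandon the annihilating-walker heuristic and instead use the decomposition ${\bf d}(uN^2)\le \mu(\mathcal G_s^c)+\max_{\chi\in\mathcal G_s}\|P^\chi_{uN^2}-\mu\|$ together with the height-function coupling of Proposition~\ref{csdf}.
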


\begin{rem}
Note that we will not prove that 
\begin{equation}\label{liminf}
\lim_{s\to 0}\liminf_{N\to \infty}   {\bf d}(N^2s)=1,
\end{equation}
 which would complete the picture by showing that the system does not mix at all before the $N^2$ time-scale.
However we point out to the interested reader that such a result can be obtained combining ingredients of Section \ref{lowerbounds}  together with \cite[Lemma 3.1]{cf:Lac2}
which asserts that the fluctuations of  $a_1(\eta)$ defined in \eqref{defa1} are asymptotically Gaussian. The convergence of ${\bf d}(N^2\cdot)$ when $N$ tends to infinity remains an open question.
 \end{rem}

\subsection{The cutoff for the exclusion on the segment}

Let us, in this section, briefly sketch the proof or at least recall the ingredients used in \cite{cf:Lac} 
to derive the cutoff for the exclusion on the segment $ \llbracket 1,N  \rrbracket$.
The state-space of particle configuration on the segment comes with a natural order 
\begin{equation}\label{order}
 \eta\le \eta'  \quad \Leftrightarrow \quad \forall x\in  \llbracket 1,N  \rrbracket \quad  \sum_{y=1}^x \eta_y \le  \sum_{y=1}^x \eta'_y.
\end{equation}

We set $\xi_x(\eta):= \sum_{y=1}^x \eta_y.$
It turns out that not only this order is preserved by the dynamics in a certain sense (see \cite{cf:Wilson} but also \cite{cf:Rost}), 
and but it has additional properties: if $\wedge$ denotes the maximal configuration for \eqref{order} then $P_t(\wedge,\cdot)$ is an increasing function (for any $t$); 
we also have positive correlation of increasing events (a.k.a \ the FKG inequality after \cite{cf:FKG}).

\medskip

These monotonicity properties are first used to show that after time of $(1+\delta)(2\pi)^{-1}N^2\log k$, starting the dynamics from  $\wedge$, 
we can couple a finite dimensional
(\textit{i.e.} whose dimension remains bounded when $N$ grows)
 projection of 
$(\xi_x(\eta_t))_{x\in \llbracket 1,N  \rrbracket}$ together with the corresponding equilibrium distribution.

\medskip

The monotonicity is then used again to check that the Peres-Winckler censorship inequality \cite[Theorem 1.1]{cf:PW} is valid in our context. The latter result establishes that, if 
one starts from the maximal configuration for the order described in \eqref{order}, ignoring some of the updates in the dynamics only makes the mixing slower
(as shown in \cite{cf:Hol}, this can fail to be true if there is no monotonicity). 
We use this statement to show that the system mixes in a 
time much smaller than  $(2\pi)^{-1} N^2\log k$ once a finite projection is close to equilibrium.
This method establishes a sharp upper bound (at first order) for the mixing time starting from $\wedge$ and some additional work is necessary to show that this is indeed the worse initial condition
(we refer to the introduction \cite{cf:Lac} for more details).

\subsection{Differences between the segment and the circle}

In the view of the previous section, the proof in \cite{cf:Lac} for the mixing time for the exclusion on the segment heavily relies on monotonicity arguments
in every step of the process.
The drawback of this approach is that it is not very robust, and cannot be used for either higher dimension graphs 
(for instance $\{1,\dots,N\}^d$ with either free or periodic boundary condition). It even breaks down completely if one allows jump between site $1$ and $N$.

\medskip

With this in mind, our idea when studying the exclusion on the circle is also to develop an approach to the problem which is more flexible, and 
could provide a step towards the rigorous identification of the cutoff threshold in higher dimensions 
(see Section \ref{higherdef} for conjectures and rigorous lower-bounds).
This goal is only partially achieved as, even if we do not require monotonicity, 
a part of our proof relies on the interface representation of the process (see Section \ref{fluctuat})
which is a purely one-dimensional feature. However let us mention that a $d$ dimensional generalization of Proposition \ref{smallfluctu} 
can be shown to remain valid for $d\ge 2$. A missing ingredient in higher dimension is thus a coupling which allows to couple particle configuration with typical fluctuation with equilibrium in a time of order $N^2$.

\medskip

Another positive point is that by relying much less on monotonicity, we are able to prove statements about the mixing time starting from an arbitrary position (cf.
Theorem \ref{secondres}) instead of focusing only on the extremal ones.

\medskip

Finally note that the method developed in this paper gives more precise results than the one in \cite{cf:Lac} as we identify exactly the width of 
the cutoff window (and it also extends to the segment).
However, we could not extract from it the asymptotic mixing time for the adjacent transposition shuffle, which seem to require novel ideas.

\begin{rem} In \cite{cf:Lac2},
by combining the technique of the present paper with some aditionnal new ideas, the author improved Theorem \ref{mainres} by describing the full cutoff profile, 
that is, identified the limit of  $d^{N,k} \left( (8\pi^2)^{-1}N^2\log k+uN^2 \right)$. 
Proposition \ref{smallfluctu} as well as the multiscale analysis used 
in Section \ref{multis} play a crucial role in the proof.
\end{rem}

\subsection{Organization of the paper}

In Section \ref{lowerbounds} we prove the part of the results which corresponds to lower-bounds for the distance to equilibrium, that is to say, the first lines of 
\eqref{damix} and \eqref{damix2}.
The proof of this statement is very similar to the one proposed by Wilson in \cite{cf:Wilson}, 
the only significant difference is that we have to work directly with the particle configuration instead of the height-function. 
Doing things in this manner underlines that the proof in fact does not rely much on the dimension   (see Section \ref{higherdef}).
While the proof does not present much novelty, 
we prefer to mention it in full as it is relatively short and it improves the best existing bound in the literature (see \cite{cf:Mor}).

\medskip

The main novelty in the paper is the strategy to prove upper-bound results (second lines of \eqref{damix} and \eqref{damix2}).
In Section \ref{pubdecompo} we explain how the proof is decomposed. In Section \ref{arratia}, we use a comparison inequality of Liggett \cite{cf:Lig77}
to control the (random) fluctuations of the  local density of particle after time $\frac{N^2}{8\pi^2}\log k$. Finally 
we conclude by showing that configuration which have reasonable fluctuations couples with equilibrium within time $O(N^2)$, 
using interface representation for the particle system,
and a coupling based on the graphical construction. The construction is detailed in Section \ref{fluctuat}, 
 and the proof is performed using
a multi-scale analysis in Section \ref{multis}.  

\section{Lower bound on the mixing time}\label{lowerbounds}

\subsection{The statement}

The aim of this Section is to prove the some lower bounds on the distance to equilibrium.
Following the method of \cite[Theorem 4]{cf:Wilson}, we achieve such a bound by controlling the first two moments of the first Fourier coefficient of $\eta_t$.

\begin{proposition}\label{dastate}
For any sequence $k(N)$ satisfying $k(N)\le N/2$ and tending to infinity.  we have 
\begin{equation}\label{laslande}
\lim_{s\to -\infty} \lim_{N\to \infty}  d^{N,k} \left( (8\pi^2)^{-1}N^2\log k+sN^2 \right)=1,
\end{equation}
and for any $u\in \bbR$
\begin{equation}\label{wiltord}
\liminf_{N\to \infty} d^{N,k} \left( (8\pi^2)^{-1}N^2\log k+uN^2 \right) >0.
\end{equation}
Moreover we have for any $u>0$
\begin{equation}\label{meanstuf}
\liminf_{N\to \infty}  {\bf d}^{N,k}(N^2u)>0.
 \end{equation}
\end{proposition}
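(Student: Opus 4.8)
The plan is to follow Wilson's eigenfunction method, working directly with the particle configuration rather than the height function. Let $\phi(\eta) := \sum_{x\in\bbZ_N} \eta(x) \cos(2\pi x/N)$ (a real first Fourier coefficient; one may also need the sine version $\psi(\eta) := \sum_x \eta(x)\sin(2\pi x/N)$, and it is cleaner to track $a_1(\eta):=\phi(\eta)^2+\psi(\eta)^2$ or the complex coefficient $\sum_x \eta(x) e^{2\pi i x/N}$ to avoid phase issues). The key computation is that $\phi$ is (up to a constant vector) an eigenfunction of the single-particle random walk generator, hence $\frac{d}{dt}\E[\phi(\eta_t)] = -\gl_1 \E[\phi(\eta_t)]$ with $\gl_1 = 2(1-\cos(2\pi/N)) = (1+o(1))4\pi^2/N^2$; this uses that the exclusion dynamics projects onto single-particle motion for linear-in-$\eta$ observables, exactly as in \cite{cf:Wilson}. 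Thus $\E[\phi(\eta_t^\chi)] = e^{-\gl_1 t}\phi(\chi)$. Next I would estimate the second moment: a similar but more delicate computation (the exclusion constraint contributes a correction term, but one with a favorable sign, as in Wilson's Lemma) gives $\frac{d}{dt}\E[\phi(\eta_t)^2] \le -2\gl_1 \E[\phi(\eta_t)^2] + C$ for a constant $C = O(N)$ coming from the diagonal terms, whence $\Var(\phi(\eta_t^\chi)) \le C/(2\gl_1) + (\text{initial variance})e^{-2\gl_1 t} = O(N^3)$ uniformly in $t$.

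With these moment bounds in hand, the distinguishing statistic argument is standard. Choose a starting configuration $\chi$ maximizing $|\phi|$: packing the $k$ particles into the arc of length $k$ around the point where $\cos(2\pi x/N)$ is largest gives $\phi(\chi) \ge c\, \min(k, N) \cdot$ (something); more carefully, for $k \le N/2$ one gets $\phi(\chi) \asymp N$ when $k \asymp N$ and $\phi(\chi) \asymp k$ when $k \ll N$... the honest bound I would use is $|\phi(\chi)| \gtrsim k$ is too weak — rather, placing particles on $\{1,\dots,k\}$ (identifying with a suitable arc) yields $\phi(\chi) = \sum_{x=1}^k \cos(2\pi x/N)$, which for $k \le N/2$ is of order $\min(k,N/\text{const})$; in the regime $k\to\infty$ this is $\gg \sqrt{N^3}/\sqrt{\log\text{-factors}}$ precisely up to the claimed time. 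Concretely, under equilibrium $\mu$ one has $\E_\mu[\phi] = 0$ and $\Var_\mu(\phi) = O(N)$ (sum of weakly correlated bounded terms), so by Chebyshev the event $\{|\phi| \ge T\}$ has $\mu$-probability $O(N/T^2)$, while started from $\chi$ at time $t = (8\pi^2)^{-1}N^2\log k + sN^2$ we have $\E[\phi(\eta_t^\chi)] = e^{-\gl_1 t}\phi(\chi) \sim k^{-1/2} e^{-2\pi^2 s}\phi(\chi)$ with variance $O(N^3)$; choosing $\phi(\chi)$ of the maximal order makes the mean exceed several standard deviations when $s \to -\infty$, giving $d^{N,k}(t) \to 1$ and proving \eqref{laslande}, while for fixed $u$ the mean is still a fixed multiple of the standard deviation, giving \eqref{wiltord}.

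For the typical-start bound \eqref{meanstuf}, the point is that for a uniformly random $\chi \sim \mu$, the coefficient $\phi(\chi)$ is already of order $\sqrt{N}$ (its equilibrium fluctuation scale), so at time $N^2 u$ we have $\E[\phi(\eta_t^\chi)\mid \chi] = e^{-\gl_1 t}\phi(\chi)$ with $|e^{-\gl_1 t}| = e^{-4\pi^2 u + o(1)}$ bounded away from $0$; thus a positive fraction of starting configurations retain a coefficient $\phi$ that is atypically large (say $\ge c\sqrt N$ with the "correct sign") by an amount detectable against the $O(\sqrt N)$ equilibrium fluctuations and the $O(N^{3/2})$ — wait, here the variance bound $O(N^3)$ is too lossy; for typical starts one instead uses that $\Var(\phi(\eta_t^\chi)\mid \chi)$ relaxes to its equilibrium value $O(N)$, or more simply runs the distinguishing argument on the $\sqrt N$-scale directly via the first-moment contraction and a second-moment bound that is $O(N)$ after the process has partially equilibrated. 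The cleanest route is: condition on $\chi$ with $\phi(\chi) \ge 2\sqrt{\Var_\mu \phi}\cdot C$ (a positive-$\mu$-measure event), note $\E[\phi(\eta_t^\chi)\mid\chi]$ stays $\ge \text{const}\cdot\sqrt N$, apply Chebyshev with the uniform variance bound, average over $\chi$, and conclude $\mathbf d^{N,k}(N^2u) \ge$ const $> 0$.

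The main obstacle I anticipate is the second-moment computation: verifying that the exclusion interaction produces a correction term of the right sign (so that the variance does not blow up), and getting the constant in $\Var = O(N^3)$ — respectively $O(N)$ after equilibration — sharp enough that the "mean beats several standard deviations" inequality actually holds throughout the window $[(8\pi^2)^{-1}N^2\log k - |s|N^2, \dots]$. This is exactly the technical heart of \cite[Theorem 4]{cf:Wilson}; the adaptation to the circle is straightforward once one works with the complex Fourier mode $\sum_x \eta(x)e^{2\pi i x/N}$, whose modulus-squared is the natural monotone-in-expectation statistic and sidesteps the boundary-term subtleties of the interval case.
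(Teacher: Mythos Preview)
Your approach is the same as the paper's (Wilson's eigenfunction method applied directly to the particle configuration via the first Fourier coefficient), but the variance computation is wrong in a way that breaks the argument.

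You claim $\frac{d}{dt}\E[\phi(\eta_t)^2] \le -2\gl_1\,\E[\phi(\eta_t)^2] + C$ with $C=O(N)$ ``from the diagonal terms'', giving $\Var(\phi(\eta_t^\chi))=O(N^3)$. This is off by a factor of $N^2$. Each transition $\eta\mapsto\eta^x$ changes $\phi$ by $|\bcos(x+1)-\bcos(x)|\le 2\pi/N$, and the total jump rate is at most $2k$; hence the instantaneous growth rate of the martingale bracket is at most $2k\,(2\pi/N)^2=O(k/N^2)$, and $\Var(\phi(\eta_t^\chi))\le \frac{C}{2\gl_1}=O(k)$ uniformly in $t$ and $\chi$. (The paper does exactly this via the bracket of $M_t=e^{\gl_1 t}\phi(\eta_t)$.) With your $O(N^3)$ bound the Chebyshev step fails outright: at time $t=(8\pi^2)^{-1}N^2\log k+sN^2$ the mean is $e^{-\gl_1 t}\phi(\chi_0)\asymp k^{1/2}e^{-4\pi^2 s}$, which is dwarfed by a standard deviation of order $N^{3/2}$, so no separation from equilibrium is detected. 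With the correct $O(k)$ bound the standard deviation is $O(k^{1/2})$, the ratio mean/SD is $\asymp e^{-4\pi^2 s}$, and \eqref{laslande}--\eqref{wiltord} follow immediately.

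Your hesitation over the size of $\phi(\chi_0)$ is unwarranted: packing the $k$ particles around $x=0$ gives $\phi(\chi_0)\ge k/2$ for all $k\le N/2$, and this is precisely what is needed once the variance is $O(k)$. For \eqref{meanstuf} you correctly sense the issue (``the variance bound $O(N^3)$ is too lossy''), but the fix is the same global bound $\Var\le 2k$, not a separate relaxation argument: on the event $\{\phi(\chi)\ge \delta\sqrt{k}\}$ (which has $\mu$-probability bounded below uniformly in $N$, e.g.\ by a fourth-moment estimate) the same Chebyshev inequality yields a positive lower bound on $\|P^{\chi}_{uN^2}-\mu\|_{TV}$.
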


\subsection{Relaxation of the``first" Fourier coefficient}

The main idea is to look at ``the first" Fourier coefficient (a coefficient corresponding to the smallest eigenvalue 
of the discrete Laplacian on $\bbZ_N$), of $\eta_t$.
For $\eta\in \gO_{N,k}$, we define
\begin{equation}\label{defa1}
a_1(\eta):=\sum_{x\in \bbZ_N} \eta(x) \cos \big(\frac{2\pi x}{N}\big).
\end{equation}
It is  an eigenfunction of the generator $\mathcal L$, (the reason for this being that each particle performs a diffusion for which 
$\cos (\frac{x2\pi}{N})$ is an eigenfunction), associated to the eigenvalue $-\gl_1$ where 
\begin{equation}
\gl_1:=2\left(1-\cos(2\pi/N)\right).
\end{equation}

\begin{lemma}
The function $a_1$ is an eigenfunction of the generator $\mathcal L$ with eigenvalue $-\gl_1$, and as a consequence,
for any initial condition $\chi\in \gO$
\begin{equation}
M_t:=e^{-t\gl_1}a_1(\eta_t^\chi)
\end{equation}
is a martingale for the filtration $\cF$ defined by
$$\cF_t:=\sigma((\eta_s)_{s\le t} ).$$
In particular we have
\begin{equation}
\bbE\left[ a_1(\eta_t^\chi)\right]=e^{-t\gl_1}a_1(\chi).
\end{equation}
Furthermore one can find a constant such that for all $t\ge 0$
\begin{equation}\label{lavar}
\var \left[ a_1(\eta_t^\chi)\right]\le 2k.
\end{equation}

\end{lemma}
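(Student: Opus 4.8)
The plan is to handle the three claims in sequence: the eigenrelation by a direct computation, the martingale and mean identity by Dynkin's formula, and the variance bound by a Grönwall estimate for a scalar ODE into which the particle number enters through a count of domain walls. First I would check the eigenrelation. Write $c_x := \cos(2\pi x/N)$, so $a_1(\eta) = \sum_{x}\eta(x)c_x$. Since the swap $\eta\mapsto\eta^x$ only alters sites $x$ and $x+1$, we have $a_1(\eta^x)-a_1(\eta) = (\eta(x+1)-\eta(x))(c_x-c_{x+1})$ for every $\eta$ (trivially also when $\eta(x)=\eta(x+1)$). Summing over $x\in\bbZ_N$ and reindexing to collect the coefficient of each $\eta(x)$ gives $\mathcal L a_1(\eta) = \sum_x \eta(x)(c_{x-1}+c_{x+1}-2c_x)$, and $c_{x-1}+c_{x+1}=2\cos(2\pi/N)c_x$ turns this into $-\gl_1 a_1(\eta)$ with $\gl_1 = 2(1-\cos(2\pi/N))$. (Alternatively one can invoke the standard fact that under symmetric exclusion $\sum_i g(X_i)$ evolves exactly as for independent rate-one walks, together with $\cos(2\pi\cdot/N)$ being an eigenfunction of the single-walk generator on $\bbZ_N$.)

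Next, Dynkin's formula — with no integrability caveat since $\gO_{N,k}$ is finite — shows that $a_1(\eta^\chi_t) - a_1(\chi) + \gl_1\int_0^t a_1(\eta^\chi_s)\dd s$ is an $\cF$-martingale; a standard integrating-factor argument then gives that $e^{\gl_1 t}a_1(\eta^\chi_t)$ is a martingale (this is the process $M_t$ of the statement, up to the sign of the exponent). Taking expectations in Dynkin's identity yields $\frac{\dd}{\dd t}\bbE[a_1(\eta^\chi_t)] = -\gl_1\bbE[a_1(\eta^\chi_t)]$ and hence $\bbE[a_1(\eta^\chi_t)] = e^{-\gl_1 t}a_1(\chi)$. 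For the variance bound I would introduce $v(t) := \var(a_1(\eta^\chi_t))$ and the carré-du-champ $\Gamma a_1(\eta) := \sum_x (a_1(\eta^x)-a_1(\eta))^2$, so that $\mathcal L(a_1^2) = 2a_1\mathcal L a_1 + \Gamma a_1$; differentiating $\bbE[a_1(\eta^\chi_t)^2]$ and $(\bbE[a_1(\eta^\chi_t)])^2$ separately produces the ODE $v'(t) = -2\gl_1 v(t) + \bbE[\Gamma a_1(\eta^\chi_t)]$ with $v(0)=0$. From the first paragraph, $(a_1(\eta^x)-a_1(\eta))^2 = \ind\{\eta(x)\neq\eta(x+1)\}(c_x-c_{x+1})^2$ and $(c_x-c_{x+1})^2 = 4\sin^2(\pi/N)\sin^2(\pi(2x+1)/N) \le 4\sin^2(\pi/N) = \gl_1$, so $\Gamma a_1(\eta) \le \gl_1\cdot\#\{x:\eta(x)\neq\eta(x+1)\} \le 2k\gl_1$, the last inequality because every discordant edge has exactly one occupied endpoint and each of the $k$ occupied sites lies on at most two edges. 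Plugging $\bbE[\Gamma a_1(\eta^\chi_t)]\le 2k\gl_1$ into the ODE and integrating by variation of constants gives $v(t)\le k(1-e^{-2\gl_1 t}) \le k \le 2k$.

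The only step that is not routine bookkeeping is the estimate $\Gamma a_1(\eta)\le 2k\gl_1$: the naive bound $\Gamma a_1(\eta)\le \sum_x(c_x-c_{x+1})^2 = \tfrac{N}{2}\gl_1$ only produces $v(t)\le N/4$, useless when $k\ll N$, so it is essential to exploit the particle-number constraint through the number of domain walls. Everything else — interchanging $\tfrac{\dd}{\dd t}$ with $\bbE$, the applicability of Dynkin's formula, the carré-du-champ identity — is immediate since the state space is finite.
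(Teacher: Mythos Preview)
Your argument is correct and essentially matches the paper's: both verify the eigenrelation by summation by parts, obtain the martingale via the standard integrating-factor/Dynkin argument (you rightly flag the sign typo in the exponent of $M_t$), and bound the variance through the quadratic variation, using that at most $2k$ edges are active and each transition changes $a_1$ by at most $|c_x-c_{x+1}|$. Your carr\'e-du-champ formulation with the exact bound $(c_x-c_{x+1})^2\le 4\sin^2(\pi/N)=\gl_1$ is in fact a touch sharper than the paper's version (which uses $(2\pi/N)^2$ and needs $N$ large for the last inequality), giving $\var\le k$ rather than $2k$, but the two arguments are the same in substance.
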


\begin{proof}
Using the notation 
\begin{equation*}
\nabla f(x)=f(x+1)-f(x) \quad \text{and} \quad \gD f=f(x+1)+f(x+1)-2f(x), 
\end{equation*}
and set $\overline \cos (x):=\cos \big(\frac{2\pi x}{N}\big)$.
We have  

\begin{multline}
\mathcal L a_1(\eta):= \sum_{x\in \bbZ_N} \left(a_1(\eta^x)-a_1(\eta)\right)=
-\sum_{x\in \bbZ_N} \nabla \eta(x)\nabla \overline \cos (x)
\\=\sum_{x\in \bbZ_N} \eta(x)\gD \overline \cos (x)= -\gl_1 a_1(\eta)
\end{multline}
where the second equality comes from reindexing the sum and the last one from the identity 
\begin{equation}
\gD \overline \cos (x) =-\gl_1 \overline \cos (x).
\end{equation}
From the Markov property, we have for every positive $t$,
\begin{equation}
\partial_s \bbE[M_{t+s} \ | \ \cF_t ] |_{s=0}= \gl_1 M_{t}+ e^{t\gl_1}(\cL a_1)(\eta^\chi_t)=0.
\end{equation}
which implies that it is a martingale. In particular we have 
\begin{equation}\label{esp}
\bbE \left[a_1(\eta^{\chi}_t)\right]= e^{-\gl_1 t}a_1(\chi).
\end{equation}

Now let us try to estimate the variance of $M_t$: for the process with $k$ particle, the maximal transition rate is $2k$
(each of the $k$ particles can jump in an most $2$ directions independently with rate one).
If a transition occurs at time $s$, the value of  $M_s$ varies at most by an amount

$$e^{\gl_1 s}\max_{x\in \bbZ_N} |\bcos (x)-\bcos (x+1)|\le e^{\gl_1 s}\frac{2\pi}{N}.$$

With this in mind we can obtain a bound on the bracket of $M$ (that is: the predictable process such that $M^2_t-\langle M\rangle_t$ is a martingale) 

\begin{equation}
\langle M\rangle_t \le 2k\int_0^t e^{2\gl_1 s}\left(\frac{2\pi}{N}\right)^2\dd s
\end{equation}
Then using the fact that $\var(M_t)= \bbE\left[ \langle M\rangle_t \right]$, we 
have, for $N$ sufficiently large, for any $\chi\in \gO_{N,k}$ and any $t\ge 0$
\begin{equation}\label{varance}
\var \left[ a_1(\eta_t^\chi)\right]=e^{-2\gl_1 t}   \bbE\left[ \langle M\rangle_t \right]
\le 2k  \int_0^t  e^{2\gl_1 (s-t)} \left(\frac{2\pi}{N}\right)^2\dd s\le \frac{4\pi^2k}{N^2\gl_1}\le 2k
\end{equation}
where the last inequality comes from the fact that $\gl_1\sim 4\pi^2 N^{-2}$.
\end{proof}

At equilibrium (\textit{i.e.} under the distribution $\mu$) $a_1(\eta)$ has mean zero and typical fluctuations of order $\sqrt{k}$. The equilibrium variance can either be computed directly or one can use \eqref{lavar} for $t\to \infty$ to obtain

$$\var_{\mu}\left(a_1(\eta) \right)\le 2k.$$
From \eqref{lavar}, if $\bbE\left[a_1(\eta_t^\chi)\right] $ is much larger than $\sqrt{k}$ then $a_1(\eta_t^\chi)$ is much larger than $\sqrt{k}$ with large probability which implies
$\| P^{\chi}_t-\mu\|$ has to be large. We need to use this reasoning for a $\chi$ which maximizes $a_1$.

\subsection{Proof of Proposition \ref{dastate}}

Using \cite[Proposition 7.8]{cf:LPW} (obtained from the Cauchy Schwartz inequality)
and the estimates \eqref{esp}-\eqref{varance}, we have

\begin{equation}\begin{split}\label{kit}
\|P^{\chi}_t-\mu\|_{TV}&\ge \frac{\left(\bbE\left[a_1(\eta_t^{\chi})\right]\right)^{2}}{\left(\bbE\left[a_1(\eta_t^{\chi})\right]\right)^{2}+
2\left[\var \left( a_1(\eta_t^{\chi})\right)+\var_{\mu} \left( a_1(\eta) \right) \right]}\\
&\ge\frac{1}{1+ 8k \exp(2\gl_1 t) a_1(\chi)^{-2}}.
\end{split}\end{equation}

Consider $\chi=\chi_0$ being the configuration which minimizes $a_1$.
\begin{equation}
\chi_0(x):=\begin{cases} \ind_{\{x\in \{-p,\dots, p\}\}} \text{ if } k=2p+1,\\
\ind_{\{x\in \{-p+1,\dots, p\}\}} \text{ if } k=2p,
\end{cases}
\end{equation}
It is rather straight-forward to check that for any $N\ge 2$, $k\le N/2$, we have 
$a_1(\chi_0)\ge k/2$.
Thus using the above inequality for $t=t_N:= uN^2+ \frac{N^2}{8\pi^2}\log k$  ($u\in \bbR)$ the reader can check that

\begin{equation}
\liminf_{N\to \infty} \|P^{\chi_0}_{t_N}-\mu\|_{TV}\ge  \lim_{N\to \infty} \frac{1}{1+32 k^{-1} \exp(-2\gl_1 t_N)}= \frac{1}{1+32e^{- 8\pi^2 u}},
\end{equation}
which implies both \eqref{laslande} and \eqref{wiltord}.
To prove \eqref{meanstuf} we need to use \eqref{kit} for the  set 
$$A^N_{\delta}:=\{ \chi\in \gO_{N,k} \ | \ a_1(\chi)\ge \delta \sqrt{k}\}.$$ 
We have 
\begin{equation}
{\bf d}(sN^2)\le \left[ 1-\mu( A_{\delta}) \right] +\frac{\mu( A_{\delta})}{1+ 8 \delta^{-2}\exp(2\gl_1N^2 s) }.
\end{equation}
To conclude the proof, it is sufficient to prove that $\liminf_{N\to \infty} \mu_N(A_{\delta})>0$ for some small $\delta>0$. 
This can be done e.g. by showing that $\mu\left[ (a_1(\eta))^4\right]\le Ck^2$ (which is left as an exercise to the reader).

\qed

\subsection{The exclusion in higher dimensions} \label{higherdef}

Let us shortly present in this section a generalization of Proposition \ref{dastate} for the exclusion process in higher dimension
$d\ge 2$.

\medskip

For $N\in \bbN$, and $k\le N^d/2$ we define the state-space of particle configurations as
\begin{equation}
\gO^d_{N,k}:=\left\{ \eta\in \{0,1\}^{\bbZ_N^d} \ | \sum_{x\in \bbZ_N^d} \eta(x)=k \right\}.
\end{equation}
Given $x\sim y$ a pair of neighbor on the torus $\bbZ_N^d$, we set
\begin{equation}
\eta^{x,y}:=\begin{cases} \eta^{x,y}(x)=\eta(y),\\
 \eta^{x,y}(y)=\eta(x)\\
  \eta^{x,y}(z)=\eta(z), \quad \text{ for } z\notin \{x,y\}.
\end{cases}
\end{equation}
and define the generator by 
\begin{equation}
\mathcal L f(\eta):= \sumtwo{x, y\in \bbZ_N}{x\sim y} f(\eta^{x,y})-f(\eta).
\end{equation}

We set $d^{N,k,d}$ to be the distance to equilibrium of the chain with generator $\cL$ at time $t$ (see \eqref{dat}).
The we can adapt the proof of Proposition \ref{dastate} and show that

\begin{proposition}
For any sequence $k(N)$ satisfying $k(N)\le N^d/2$ and tending to infinity.  we have 
\begin{equation}
\lim_{u\to \infty} \lim_{N\to \infty}  d^{N,k,d} \left( (8\pi^2)^{-1}N^2\log k-uN^2 \right)=1,
\end{equation}
and for any $u\in \bbR$
\begin{equation}
\liminf_{N\to \infty} d^{N,k,d} \left( (8\pi^2)^{-1}N^2\log k+uN^2 \right) >0.
\end{equation}
\end{proposition}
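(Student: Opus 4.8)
The plan is to mimic almost verbatim the proof of Proposition \ref{dastate}, replacing the one-dimensional eigenfunction $a_1$ by its $d$-dimensional analogue. Specifically, I would define, for $\eta\in\gO^d_{N,k}$,
\begin{equation*}
a_1^{(d)}(\eta):=\sum_{x=(x_1,\dots,x_d)\in\bbZ_N^d}\eta(x)\cos\Big(\frac{2\pi x_1}{N}\Big),
\end{equation*}
i.e. the first Fourier coefficient associated to the smallest nonzero eigenvalue of the discrete Laplacian on $\bbZ_N^d$. Since each particle performs a symmetric random walk on the torus for which $x\mapsto\cos(2\pi x_1/N)$ remains an eigenfunction (the coordinate $x_1$ only changes when the particle moves in the first coordinate direction, and in that direction it executes a nearest-neighbour walk on $\bbZ_N$), one checks exactly as in the Lemma preceding \eqref{defa1} that $\cL a_1^{(d)}=-\gl_1 a_1^{(d)}$ with the same $\gl_1=2(1-\cos(2\pi/N))$, hence $M_t:=e^{-\gl_1 t}a_1^{(d)}(\eta_t^\chi)$ is a martingale and $\bbE[a_1^{(d)}(\eta_t^\chi)]=e^{-\gl_1 t}a_1^{(d)}(\chi)$.

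Next I would reproduce the variance bound. The maximal transition rate is still $2k$ (each of the $k$ particles attempts $2d$ jumps, but at most... ) — more carefully, in dimension $d$ each particle has $2d$ possible moves each at rate one, so the total rate is at most $2dk$; moreover only moves in the first coordinate direction change $a_1^{(d)}$, and such a move changes $M_s$ by at most $e^{\gl_1 s}\frac{2\pi}{N}$. Thus the bracket satisfies $\langle M\rangle_t\le 2dk\int_0^t e^{2\gl_1 s}(2\pi/N)^2\dd s$, and since $\gl_1\sim 4\pi^2N^{-2}$ we get $\var[a_1^{(d)}(\eta_t^\chi)]\le 2dk$ for $N$ large; the same bound holds for $\var_\mu(a_1^{(d)})$ by letting $t\to\infty$. (The extra factor $d$ is harmless since $d$ is fixed.) Then, exactly as in \eqref{kit}, the Cauchy–Schwarz lower bound \cite[Proposition 7.8]{cf:LPW} gives
\begin{equation*}
\|P^\chi_t-\mu\|_{TV}\ge\frac{1}{1+Ck\exp(2\gl_1 t)a_1^{(d)}(\chi)^{-2}}
\end{equation*}
for an absolute constant $C$ (depending only on $d$).

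Finally I would exhibit an initial configuration $\chi_0$ with $a_1^{(d)}(\chi_0)\ge ck$ for some $c>0$: take $\chi_0$ to be the indicator of the $k$ sites $x$ with the smallest value of $\cos(2\pi x_1/N)$ (filling up "hyperplane slices" $\{x_1=j\}$ in order of increasing $\cos(2\pi j/N)$, each slice containing $N^{d-1}$ sites). Since $k\le N^d/2$, these are all slices with $\cos(2\pi x_1/N)<0$ up to boundary effects, so $-a_1^{(d)}(\chi_0)=\sum_{x:\chi_0(x)=1}(-\cos(2\pi x_1/N))$ is a positive quantity of order $k$; a short computation (analogous to "$a_1(\chi_0)\ge k/2$" in the segment case, now only needing a bound up to a constant) gives $|a_1^{(d)}(\chi_0)|\ge ck$. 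Plugging $t=t_N:=uN^2+(8\pi^2)^{-1}N^2\log k$ and using $2\gl_1 t_N=\log k+8\pi^2 u+o(1)$ yields $\liminf_N\|P^{\chi_0}_{t_N}-\mu\|_{TV}\ge(1+C' e^{-8\pi^2 u})^{-1}$, which is positive for every $u$ and tends to $1$ as $u\to-\infty$, proving both displays.

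The only genuinely new point compared to the $d=1$ argument is the slightly more careful bookkeeping for $\chi_0$: in the segment case one gets the clean constant $1/2$, whereas here I only claim a bound of order $k$ up to an unspecified constant, which is all that is needed since the statement of the Proposition involves only $\liminf>0$ and the $u\to-\infty$ limit. I do not expect any real obstacle — the point of this subsection is precisely that the lower-bound argument is dimension-insensitive, the only $d$-dependence being harmless constants. One should just double-check that $\cos(2\pi x_1/N)$ is genuinely an eigenfunction of the $d$-dimensional exclusion generator (it is, by the one-particle computation applied coordinatewise) and that the equilibrium variance estimate $\var_\mu(a_1^{(d)})=O_d(k)$ still follows from the $t\to\infty$ limit of the martingale variance bound.
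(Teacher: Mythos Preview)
Your proposal is correct and follows essentially the same route as the paper's own proof, which simply says that the argument is identical to the one-dimensional case using the eigenfunction $a_1(\eta)=\sum_{x}\eta(x)\cos(2\pi x_1/N)$ and that a maximizing configuration $\chi_0$ satisfies $a_1(\chi_0)\ge k/2$. Two cosmetic remarks: since only moves in the first coordinate direction change $a_1^{(d)}$, the effective jump rate relevant to the bracket is $2k$ rather than $2dk$, so the variance bound $\le 2k$ carries over unchanged; and your ``slice-filling'' configuration is precisely a maximizer (up to sign), for which the paper asserts the explicit constant $k/2$ rather than merely $ck$.
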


\begin{rem}
Note that the result remains valid if the torus is replaced by the grid (\textit{i.e.} if we drop the periodic boundary condition) in which case 
$(8\pi^2)^{-1}$ has to be replaced by $(2\pi^2)^{-1}$.
In view of this result, and of the content of the next section, it is natural to conjecture that
$(8\pi^2)^{-1}N^2\log k$ is the mixing time of the exclusion process on the torus.
 \end{rem}
\begin{proof}
The proof is almost exactly the same.
The eigenfunction which one has to consider is 
\begin{equation}
a_1(\eta):=\sum_{x\in \bbZ^d_N} \eta(x) \cos \left(\frac{x_1\pi}{N}\right).
\end{equation}
where $x_1\in \bbZ_N$ is the first coordinate of $\bbZ_N$.
It is not difficult to check that if $\chi_0$ is a maximizer of $\bbZ_N^d$ (there might be many of them)
$a_1(\chi^0)$ is larger than $k/2$.
\end{proof}

 \section{Upper bound on the mixing time} \label{pubdecompo}

 \subsection{Decomposition of the proof}

To complete the proof of the main result, we have to prove 

\begin{proposition}\label{dastate2}
For any sequence $k(N)$ satisfying $k(N)\le N/2$ and tending to infinity, and for any $u\in \bbR$,  we have
\begin{itemize} 
\item[(i)]
\begin{equation}\label{mikou}
\lim_{s \to \infty} \limsup_{N\to \infty}  d^{N,k} \left( (8\pi^2)^{-1}N^2\log k+sN^2 \right)=0,
\end{equation}
\item[(ii)]
\begin{equation}\label{benarbia}
\limsup_{N\to \infty} d^{N,k} \left( (8\pi^2)^{-1}N^2\log k+uN^2 \right) <1.
\end{equation}
\item[(iii)]
\begin{equation}\label{mikou2}
\lim_{s\to \infty} \limsup_{N\to \infty}  {\bf d}(sN^2)=0,
\end{equation}
\item[(iv)]
\begin{equation}\label{benarbia2}
\limsup_{N\to \infty}  {\bf d}(uN^2)<1.
\end{equation}
\end{itemize}
\end{proposition}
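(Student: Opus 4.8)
The plan is to reduce the four statements to two manageable pieces: a \emph{density-fluctuation estimate} at time $t_1:=(8\pi^2)^{-1}N^2\log k$ and a \emph{coupling estimate} showing that any configuration with ``reasonable'' local density fluctuations mixes within an extra time $O(N^2)$. Concretely, I would first fix a scale and a notion of ``good'' configuration: for a mesoscopic box size $\ell = \ell(N)$ (say $\ell$ a large constant times $\sqrt{N^2/\log k}\,$ or some power of $N$), let $G$ be the set of $\eta$ such that, on every arc of length $\ell$, the number of particles deviates from its mean $k\ell/N$ by at most $C\sqrt{(k\ell/N)\log N}$ (or an analogous bound phrased via the height function $\xi$). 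The first claim to establish is that, starting from \emph{any} $\chi$, the law $P^\chi_{t_1}$ is supported, up to total-variation error $o(1)$, on $G$. This is exactly where the comparison inequality of Liggett \cite{cf:Lig77} mentioned in the introduction enters: one compares the occupation variables of the exclusion process with those of independent random walks, for which the local-density fluctuations at time $t_1$ are controlled by classical heat-kernel/Gaussian estimates. The choice of prefactor $(8\pi^2)^{-1}$ is what guarantees that the ``first Fourier mode'' (and more generally all low modes) have contracted below the equilibrium fluctuation scale $\sqrt{k}$ by time $t_1$, so that the configuration genuinely looks locally like equilibrium.

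The second ingredient — and the technical heart — is a coupling statement: if $\eta_0 \in G$ and $\tilde\eta_0 \sim \mu$, then one can couple $(\eta_t)$ and $(\tilde\eta_t)$ so that $\bbP(\eta_{sN^2}\neq \tilde\eta_{sN^2}) \to 0$ as first $N\to\infty$ and then $s\to\infty$. Here I would use the interface (height-function) representation of the process together with the graphical construction, as the paper signals in Section \ref{fluctuat}, and run a multiscale argument: couple the two height functions successfully on arcs of length $\ell$, then $2\ell$, then $4\ell$, etc., each stage costing time $O(\ell^2), O((2\ell)^2),\dots$, the geometric sum being $O(N^2)$. The monotonicity of the graphical construction on an arc with fixed boundary heights lets one sandwich the coupled trajectories and control the probability that a given scale fails to couple in its allotted time by the spectral-gap bound \eqref{sgapin} for the segment dynamics of that length. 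Summing over scales and over the $O(N/\ell)$ translates gives the total-variation bound. Combining the two ingredients: $d^{N,k}(t_1+sN^2) \le \|P^\chi_{t_1}-\mathbf{1}_G P^\chi_{t_1}\|_{TV} + (\text{coupling error})$, which proves (i); (ii) follows because already at $s=u$ (any fixed real) the same decomposition gives a bound bounded away from $1$ — one does not need the coupling error to be small, only the good-set probability to be bounded below and the coupling to succeed with positive probability, which the lower-bound Proposition \ref{dastate} shows is the best one can hope for.

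For (iii) and (iv), the point is that a \emph{typical} starting configuration $\chi\sim\mu$ already lies in (a slightly enlarged version of) the good set $G$ with probability $1-o(1)$ — indeed its local densities have equilibrium fluctuations by definition — so one may \emph{skip the first phase entirely} and apply only the coupling estimate, starting from time $0$. This yields $\mathbf{d}(sN^2)\le \mu(G^c) + (\text{coupling error from } G) \to 0$ as $N\to\infty$ then $s\to\infty$, giving (iii), and $\mathbf{d}(uN^2) < 1$ for each fixed $u>0$ from the positive-probability version, giving (iv). (The matching lower bounds are already in Proposition \ref{dastate}.)

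\textbf{Main obstacle.} The genuinely hard step is the multiscale coupling in the second ingredient: one must verify that the height-function monotonicity survives on the circle (where there is no global order as on the segment, only a local/arc-wise one once boundary heights are pinned), that the per-scale failure probabilities are summable, and that the errors accumulated over $\log_2(N/\ell)$ scales and $O(N/\ell)$ spatial translates do not destroy the $o(1)$ bound. Getting the scale $\ell$ and the good-set tolerance in $G$ to be simultaneously compatible with both the Liggett comparison (phase one) and the multiscale coupling (phase two) is the delicate balancing act; everything else is either classical (heat-kernel estimates, Cauchy–Schwarz as in \eqref{kit}) or bookkeeping.
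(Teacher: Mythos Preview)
Your two-phase decomposition --- a density-fluctuation estimate via Liggett's comparison at time $(8\pi^2)^{-1}N^2\log k$, followed by a coupling estimate showing that any configuration in a ``good set'' $\mathcal G_s$ mixes in a further $O(N^2)$ --- is exactly the reduction the paper makes (Propositions~\ref{smallfluctu} and~\ref{csdf}), and your derivation of (i)--(iv) from these two ingredients via the semigroup inequality is correct.

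Where you diverge from the paper is in the mechanism for the coupling step, and this is where your proposal has a genuine gap. You suggest a \emph{spatial} multiscale: couple on arcs of length $\ell$, then $2\ell$, etc., pinning boundary heights and using the segment spectral-gap bound \eqref{sgapin} at each scale. Two problems: (a) boundary heights at arc endpoints are not fixed but evolve, so ``pinning'' them amounts to a censoring argument, and the Peres--Winkler inequality that would justify this requires a global monotone order that the paper explicitly notes is unavailable on the circle; (b) more fatally, the spectral-gap bound for a segment of length $L$ gives a coupling time of order $L^2\log\binom{L}{k_L}$, not $L^2$, so your final scale $L=N$ would cost $N^2\log k$ and you would recover at best the cutoff, not the $N^2$ window.

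The paper's coupling is quite different. It lifts globally to height functions $\xi^1_t\le \xi^0_t\le \xi^2_t$ on the cylinder and uses a graphical construction with Poisson clocks indexed by $(x,z)\in\bbZ_N\times\bbZ$, so that corners of $\xi^1$ and $\xi^2$ flip \emph{as independently as possible} while preserving the order. The payoff is that the area $A(t)=\sum_x(\xi^2_t(x)-\xi^1_t(x))$ becomes a time-changed simple random walk on $\bbZ_+$ absorbed at $0$. The multiscale is then not spatial but on the \emph{values} of $A$: one defines $\tau_i=\inf\{t:A(t)\le \sqrt{k}\,N\,2^{-i}\}$ and controls, at each level, the jump rate $u(t)$ of $A$ via a ``bubble decomposition'' of the region between the two curves, together with the fact that $\nabla\xi^1_t\sim\mu$ at all times so that flippable corners have positive density. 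This is the idea you are missing; the area-martingale structure is what replaces monotonicity/censoring and kills the extra $\log k$.
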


The proof of this statement is much more involved than that of Proposition \ref{dastate} and 
relies on an explicit coupling of $P^{\chi}_t$ and the equilibrium measure $\mu$ for an arbitrary $\chi\in \gO$, which requires two 
steps.

\medskip

In a first step we want to show that after time $t_0=(8\pi^2)^{-1}N^2\log k$, or even shortly before that time, the density of particle is close to $k/N$
everywhere on the torus and that the deviation from it are not larger than equilibrium fluctuation (which are of order $\sqrt{k}$).
This part of the proof relies on comparison inequalities developed by Liggett \cite{cf:Lig77}, which allow to replace 
the exclusion process with $k$ independent random walks.

\medskip

In a second step, we construct a dynamical coupling of the process starting $\chi$ which has fluctuations of order $\sqrt{k}$, with one starting from 
equilibrium, using the height-function representation. We show that the two height functions couple within a time $O(N^2)$ which is what we need to conclude.
 The construction of the coupling and heuristic explanations are given in Section \ref{fluctuat}, while the proof is performed in Section \ref{multis}.

\subsection{Control of the fluctuation of the particle density}

To present the main proposition of the first step we need to introduce some notation
Given $x\ne y$ in  $\bbZ_N$, we define the interval $[x,y]$ to be the smallest (for the inclusion) subset $I$ of $\bbZ_N$ which contains $x$ 
and which satisfies
\begin{equation}\label{theinterval}
\forall z\in I\setminus\{y\},\ z+1\in I.
\end{equation}
Let $f$ be a function defined on $\bbZ_N$ we use the notation
\begin{equation}
\sum_{z=x}^y f(z):=\sum_{z\in [x,y]} f(z).
\end{equation}
We define the \textsl{length} of the interval (which we write $\#[x,y]$) to be the number of points in it (e.g.\ it is equal to $y-x+1$ if $1\le x\le y\le N$). 
We will prove the following result:
given $A\ge 0$ we set 
\begin{equation}
t_A=\frac{N^2}{8\pi^2} \log k- A N^2.
\end{equation}

\begin{proposition}\label{smallfluctu}
There exists a constant $c$ such that, 
for all $A\in \bbR$, for all $N$ sufficiently large (depending on $A$)
for all initial condition $\chi\in \gO_{N,k}$,
\begin{equation}\label{fluqueton}
\bbP\left[\exists x,y \in \bbZ_N, \
\left| \sum_{z=x+1}^y \left(\eta^\chi_{t_A}(z)-\frac{k}{N}\right)\right|\ge \left(s+8e^{4\pi^2A}\right) \sqrt{k} 
\right] \le 2\exp\left( -c s^2\right).
\end{equation}
\end{proposition}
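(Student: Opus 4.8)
The plan is to exploit the comparison inequalities of Liggett \cite{cf:Lig77}, which allow us to bound expectations of convex functions of occupation variables for the exclusion process by the corresponding quantities for a system of $k$ independent random walks on $\bbZ_N$. More precisely, the key point is that for the partial sums $S_{x,y}(\eta):=\sum_{z=x+1}^y(\eta(z)-k/N)$, which are linear in the configuration, one can dominate the law of $S_{x,y}(\eta^\chi_t)$ (after centering by its mean) in the convex order by the law of the analogous quantity for independent walkers started from the same positions as the $k$ particles of $\chi$. This is the step I expect to require the most care: one must quote the correct form of Liggett's inequality (it is a statement about the semigroup acting on functions that are increasing/convex in an appropriate sense, and applies because the exclusion generator and the independent-walk generator can be coupled via the basic coupling), and check that the linear functionals $S_{x,y}$ fall within its scope — in particular that one gets exponential moment control, not just variance control.

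With the comparison in hand, the strategy is: first control the mean, then control the fluctuations, then take a union bound over $x,y$. For the mean, write $\bbE[\eta^\chi_t(z)] = \sum_w p_t(w,z)\chi(w)$ where $p_t$ is the heat kernel of continuous-time simple random walk on $\bbZ_N$; expanding in the Fourier basis $e_j(x)=e^{2\pi i jx/N}$ with eigenvalues $\lambda_j = 2(1-\cos(2\pi j/N))$, one gets $\bbE[\eta^\chi_t(z)]-k/N = \sum_{j\neq 0} e^{-\lambda_j t}\hat\chi(j)e_j(z)$. At time $t_A$, the $j=\pm1$ modes contribute $e^{-\lambda_1 t_A}\sim e^{4\pi^2 A}/\sqrt k \cdot (\text{bounded})$ after noting $\lambda_1 \sim 4\pi^2/N^2$ and $e^{-\lambda_1 N^2(\log k)/(8\pi^2)} = k^{-1/2}(1+o(1))$, and $|\hat\chi(\pm1)|\le k$; summing the resulting partial-sum over an interval keeps it $O(\sqrt k)$, and one checks the constant can be taken to be $8e^{4\pi^2 A}$ for $N$ large. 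The higher modes $|j|\ge 2$ decay much faster ($e^{-\lambda_j t_A} \le e^{-\lambda_2 t_A}$ with $\lambda_2\sim 16\pi^2/N^2$, giving $k^{-2}$) and are negligible even after summing over $z$ in the interval and over $j$. This gives $|\bbE[S_{x,y}(\eta^\chi_{t_A})]| \le 8 e^{4\pi^2 A}\sqrt k$ uniformly in $x,y$ for $N$ large.

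For the fluctuations, apply the Liggett comparison to bound $\bbE[\exp(\theta(S_{x,y}(\eta^\chi_t)-\bbE S_{x,y}(\eta^\chi_t)))]$ by the corresponding exponential moment for the independent-walk system. For independent walkers, $S_{x,y}$ minus its mean is a sum of $k$ independent centered bounded ($|\cdot|\le 1$) random variables (the $i$-th walker contributes $\ind_{\{W^i_t\in[x+1,y]\}}$ minus its expectation), so Hoeffding's inequality gives a subgaussian bound $\exp(C\theta^2 k)$, hence $\bbP[|S_{x,y}(\eta^\chi_{t_A}) - \bbE S_{x,y}(\eta^\chi_{t_A})|\ge s\sqrt k]\le 2e^{-cs^2}$ with $c$ absolute. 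Combining with the mean estimate, $\bbP[|S_{x,y}(\eta^\chi_{t_A})|\ge (s+8e^{4\pi^2 A})\sqrt k]\le 2e^{-cs^2}$ for each fixed pair $x,y$.

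Finally, to pass from a single pair to the existence of \emph{some} pair, take a union bound over the $N^2$ choices of $(x,y)$: since $N^2 \le e^{k^{1/4}}$ eventually (as $k\to\infty$ while $N$ could be as large as $2k$, so in fact $N^2\le 4k^2$, which is polynomial in $k$), choosing the threshold to scale with a new parameter absorbs the $\log(N^2) = O(\log k)$ loss. Concretely, replace $s$ by $s + C_1\sqrt{\log k}$ in the per-pair bound: $N^2 \cdot 2e^{-c(s+C_1\sqrt{\log k})^2} \le 2 e^{2\log N - cC_1^2\log k - cs^2} \le 2e^{-c's^2}$ once $C_1$ is large enough, and then $C_1\sqrt{\log k}$ is swallowed into the constant in front of $\sqrt k$ — wait, that is not quite allowed since the statement has the explicit form $(s+8e^{4\pi^2 A})\sqrt k$. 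The cleaner route, and the one I would actually carry out, is to note that $S_{x,y}(\eta)$ for $y$ ranging over $\bbZ_N$ with $x$ fixed is a path that changes by at most $1$ at each step, so $\max_{x,y}|S_{x,y}(\eta)| \le 2\max_x |S_{x_0,x}(\eta)|$ for any fixed basepoint $x_0$; hence it suffices to control $\max_{x\in\bbZ_N}|S_{x_0,x}(\eta^\chi_{t_A})|$, i.e. the maximum of $N$ quantities, and more importantly one can use a chaining / reflection-type argument or simply a union bound over $N$ values together with the observation that the relevant exponential-moment bound is uniform. Since $\log N = O(\log k) = o(k^{1/2})$... hmm, actually $\log N$ vs $s^2$: we need $s$ only of constant order in the final statement, so a bare union bound over $N$ terms costs a factor $N$ which is \emph{not} absorbed by $e^{-cs^2}$ for fixed $s$. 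The resolution is that the proposition's conclusion only needs to hold for $N$ large and the bound $2e^{-cs^2}$ with $c$ possibly shrunk: one uses that $S_{x_0,\cdot}(\eta^\chi_{t_A})$ is itself, via Liggett, comparable to a sum of independent bounded increments indexed by sites, whose running maximum obeys a maximal inequality (Etemadi or a martingale maximal inequality for the independent-walk representation), yielding $\bbP[\max_x|S_{x_0,x}|\ge s\sqrt k]\le 2e^{-cs^2}$ \emph{without} the factor $N$. This maximal-inequality step, together with correctly invoking Liggett's comparison for the full path rather than a single functional, is the real technical heart; the mean computation and Hoeffding bound are routine.
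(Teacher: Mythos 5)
Your overall architecture matches the paper's: split $S_{x,y}(\eta^\chi_{t_A})$ into mean plus fluctuation, control the mean by a Fourier/heat-kernel computation (giving the $8e^{4\pi^2A}\sqrt k$ term), and control the fluctuation by Liggett's comparison with $k$ independent walkers followed by an exponential-moment bound. Those two pieces are essentially correct as you describe them. The genuine gap is exactly where you yourself hesitate: the passage from a single pair $(x,y)$ to the supremum over all pairs. Your proposed shortcut --- transfer a maximal inequality for the independent-walk partial-sum process to the exclusion process ``via Liggett for the full path'' --- does not go through. Liggett's comparison (Proposition \ref{ligett} in the paper) applies only to symmetric \emph{positive definite} functions, and the exponential moments one can actually transfer are those of product form $C\prod_i g(x_i)$, i.e.\ $e^{\alpha S_{x,y}}$ for a \emph{fixed} interval. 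The functional $\max_x|S_{x_0,x}|$ (or its exponential) is not of this form and is not covered by the comparison, so there is no licensed way to import Etemadi/Doob-type maximal inequalities from the independent system.

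The paper resolves this with a hand-made chaining argument (Lemma \ref{fixeddev}): reduce to endpoints on a grid of mesh $2^{q_0}\asymp Ns/\sqrt k$, decompose each endpoint dyadically, and demand deviation $(3/4)^{p-q}s\sqrt k$ at scale $2^q$. Crucially, this only works because the per-interval Laplace-transform bound is \emph{scale-dependent}: Lemma \ref{laplacetrans} gives $\bbE[e^{\alpha S_x}]\le\exp(2\alpha^2 kx/N)$, i.e.\ variance proxy $kx/N$ for an interval of length $x$, obtained from the heat-kernel bound $\bbP[X_t\in[x,y]]\le 2\#[x,y]/N$ valid for $t\ge N^2$. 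Your plain Hoeffding bound $\exp(C\theta^2 k)$, with variance proxy $k$ at every scale, would make the exponent at scale $q$ of order $s^2(9/16)^{p-q}$, which is destroyed by the $2^{p-q}$ union-bound factor; so even the chaining route fails without the Bernstein-type refinement. To repair your proof you need both ingredients: the length-dependent exponential moment (via $\bbE[Z^2]\le 2y/N$ for each walker) and the explicit dyadic decomposition, applying Liggett separately at each scale rather than once to a maximum.
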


\subsection{Coupling with small fluctuations}

In the second step of our proof, we show that starting from a 
configuration with small fluctuations we can relax to equilibrium within time $O(N^2)$.
Set 
\begin{equation}\label{etagro}
\mathcal G_s := \left\{ \eta\in \gO  \ | \ \forall x, y \in \bbZ_N   \left|\sum_{z=x+1}^y \left(\eta^\chi_t(z)-\frac{k}{N}\right)\right|\le s \sqrt{k}\right\}
\end{equation}
The following proposition establishes this diffusive relaxation to equilibrium in two ways:
first it shows that one gets $\gep$ close to equilibrium within a time $C(s,\gep)$, but also that on the scale $N^2$
the distance becomes immediately bounded away from one for positive times.

\begin{proposition}\label{csdf}
For any $s\ge1$, given $\gep>0$ there exists a constant $C(s,\gep)$ such that 

\begin{equation}
\forall \chi \in \mathcal G_s,\ \| P^{\chi}_{C(s,\gep) N^2}-\mu \|_{TV}\le  \gep.
\end{equation}
For any $s,u>0$, there exists $c(s,u)>0$ such that 
\begin{equation}\label{gramicho}
\forall \chi \in \mathcal G_s,\ \| P^{\chi}_{uN^2}-\mu \|_{TV}\le 1-c(s,u).
\end{equation}
\end{proposition}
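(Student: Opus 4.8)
The plan is to prove Proposition \ref{csdf} via the interface (height-function) representation of the exclusion process and a coupling built from the graphical construction. Recall that a configuration $\eta\in\gO_{N,k}$ lifts to a height function $h$ on $\bbZ_N$ (up to an additive constant and a global tilt of slope $k/N$), and the event $\chi\in\mathcal G_s$ is precisely the statement that the height function of $\chi$ stays within $O(s\sqrt k)$ of the straight line of slope $k/N$. The key point is that under the graphical coupling two height functions evolve so that their difference is itself monotone in a suitable sense and the dynamics of the gap is dominated by a one-dimensional heat-type relaxation; starting from a gap of size $O(s\sqrt k)$ (which holds when both configurations lie in $\mathcal G_s$ — and an equilibrium configuration lies in $\mathcal G_{s'}$ with high probability by Proposition \ref{smallfluctu} applied with $t_A\to\infty$, equivalently by the $\sqrt k$ equilibrium fluctuation bound), the time to coalesce is $O(N^2)$.

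First I would set up the coupling: run the chain from $\chi$ and the chain from a configuration $\eta_0$ drawn from $\mu$ using the same Poisson clocks on the edges of $\bbZ_N$ (the basic/graphical coupling), and track the area between the two interfaces, or rather a collection of monotone functionals. Because the two initial height functions differ by at most $Cs\sqrt k$ uniformly, at time $0$ the "discrepancy region" has diameter $O(s\sqrt k)$ in height. Second, I would show a diffusive contraction estimate: there is $c>0$ such that after time $uN^2$ the two height functions have coalesced with probability at least $c(s,u)>0$, and after time $C(s,\gep)N^2$ they have coalesced with probability at least $1-\gep$. This is the heart of the matter, and it is carried out by the multiscale analysis referred to in Section \ref{multis}: one decomposes the circle into blocks, controls the discrepancy block by block, and iterates over scales, using at each scale that a discrepancy of height $\ell$ localized in a region of length $L$ is absorbed in time $O(L^2)$ with good probability, paying small error probabilities that sum over the $O(\log)$ many scales. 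Third, once the height functions coincide the particle configurations coincide, so coalescence time dominates $\|P^\chi_t-\mu\|_{TV}$ by the standard coupling inequality $\|P^\chi_t-\mu\|_{TV}\le \bbP[\text{not yet coalesced at }t]$, together with the observation that $\eta_0\sim\mu$ means the second marginal is exactly $\mu$; one only needs to add the negligible probability that the equilibrium configuration fails to lie in $\mathcal G_{s'}$.

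For the weak statement \eqref{gramicho}, even a crude bound suffices: at time $uN^2$ there is a fixed positive chance that, say, a macroscopic portion of the clock configuration has "smoothed" the initial $O(\sqrt k)$ bump enough to force coalescence on a block, or more simply one exhibits an explicit favorable clock scenario of probability bounded below (uniformly in $N$, for $k$ large) on which the two interfaces meet; this gives $c(s,u)>0$. For the strong statement, the full multiscale argument is needed to upgrade "positive probability" to "probability $\ge 1-\gep$" by choosing $C(s,\gep)$ large, running independent-ish trials across time windows of length $O(N^2)$ and across spatial scales.

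The main obstacle is precisely the diffusive coalescence estimate: proving that a height discrepancy of size $\sqrt k$ — which can be as large as $\sqrt{N}$ — is eliminated in time $O(N^2)$ rather than $O(N^2\log k)$. Naively, relaxing a bump of height $\sqrt k$ by the heat equation on scale $N$ takes time $\sim N^2\log\sqrt k$, which is too slow; the gain comes from the fact that we do not need pointwise relaxation of the profile but only exact coalescence of two interfaces that are already $O(\sqrt k)$-close and sandwiched appropriately, and from exploiting the local structure (the discrepancies live on $O(\sqrt k)$ adjacent sites worth of height, which at the relevant block scales looks like an $O(1)$ perturbation). Making this rigorous — controlling the interaction between the coupling and the exclusion constraint, and keeping the error probabilities summable through the multiscale recursion — is where essentially all the work lies, and it is deferred to Sections \ref{fluctuat} and \ref{multis}.
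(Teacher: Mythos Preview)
Your high-level outline is right --- lift to height functions, couple with equilibrium, show coalescence in time $O(N^2)$ --- but two of your concrete choices diverge from the paper in ways that matter, and at least the first is a genuine gap.

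\textbf{The coupling.} You propose to use ``the same Poisson clocks on the edges of $\bbZ_N$ (the basic/graphical coupling)''. That is the standard Markovian coupling for the particle system, and the paper explicitly does \emph{not} use it (see the remark following the construction in Section~\ref{grafff}). Instead, independent Poisson clocks are attached to each \emph{height position} $(x,z)\in\Theta$, so that two height functions sitting at different heights above the same site $x$ flip their corners independently. This is essential for the paper's argument: $\xi^0$ is sandwiched between $\xi^1$ and $\xi^2$, which are the \emph{same} equilibrium draw $\eta_0$ shifted vertically by $\pm H_0$. Under your edge-based coupling those two would have identical gradients at all times and hence evolve in lockstep, so the area $A(t)=\sum_x(\xi^2_t-\xi^1_t)$ would never change and the sandwich would never close. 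With the position-based coupling, $A(t)$ becomes a nearest-neighbour martingale whose jump rate $u(t)$ counts \emph{all} flippable corners inside the bubbles, not merely the sites where the two interfaces disagree locally; this larger rate is exactly what makes the $O(N^2)$ bound go through.

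\textbf{The multiscale.} You describe a spatial block decomposition (``decompose the circle into blocks, control the discrepancy block by block''). The paper's multiscale (Section~\ref{multis}) is instead on the \emph{value of the area}: one sets $\tau_i=\inf\{t:A(t)\le N\sqrt k\,2^{-i}\}$, observes that $\cT_i=\int_{\tau_{i-1}}^{\tau_i}u(t)\,\dd t$ is a hitting time for a simple random walk, and lower-bounds $u(t)\gtrsim \min\bigl(k,\ A(t)k/(N\max(H(t),\sqrt N))\bigr)$ via the equilibrium density of corners and the height oscillation (Lemmas~\ref{smalla}, \ref{fromzea}, \ref{cromeski}). Your heat-equation heuristic is therefore the wrong picture: the $O(N^2)$ bound does not come from profile relaxation but from the fact that $A(t)$ is a martingale started at $O(N\sqrt k)$ with jump rate of order $k$ while it is large, so it needs $O(N^2k)$ units of random-walk time and hence $O(N^2)$ units of real time to hit zero.
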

Now we show that Propositions \ref{smallfluctu} and \ref{csdf} are sufficient to prove \eqref{dastate2}.

\begin{proof}[Proof of Proposition \ref{dastate2}]
 We use the semi-group property at time $t_A$.
We have for any $\chi\in \gO$

\begin{equation}
P_{t_A+CN^2}^\chi(\cdot)=\sum_{\chi'\in \gO} P_{t_A}^{\chi}(\chi')P^{\chi'}_{CN^2}\left(\cdot \right).
\end{equation}

Hence, using the triangle inequality, we have  for any event $\mathcal G$

\begin{multline}\label{croot}
\| P_{t_A+CN^2}^\chi-\mu\|\le \sum_{\chi'\in \gO}P_{t_A}^{\chi}(\chi') \| P^{\chi'}_{CN^2}-\mu \|
\le
 P_{t_A}^{\chi}(\mathcal G^c)+P_{t_A}^{\chi}(\mathcal G)\max_{\chi'\in \mathcal G}\| P^{\chi'}_{CN^2}-\mu \|
\end{multline}
We can now start the proof of \eqref{mikou}. According to Proposition \ref{smallfluctu}, if $s$ is sufficiently large,
 we have
 \begin{equation}
 P_{t_0}^{\chi}((\mathcal G_s)^c)\le \gep/2.
 \end{equation}
Fixing such an $s$ (which we denote by $s(\gep)$),
 according to Proposition \ref{csdf} we can find a constant $C(\gep)$ such that 
\begin{equation}
\max_{\chi'\in \mathcal G_s(\gep)}\| P^{\chi'}_{C(\gep)N^2}-\mu \|\le \gep/2,
\end{equation}
which is enough to conclude the proof,  using \eqref{croot} with $A=0$, $\mathcal G=\mathcal G_{s(\gep)}$, and $C=C(\gep)$.\\
To prove \eqref{mikou2}, we note that for any $\cG$
\begin{equation}\label{croot2}
{\bf d}^{N,k}(uN^2)\le \mu(\mathcal G^c)+ \mu(\mathcal G) \max_{\chi'\in \mathcal G_s}\| P^{\chi'}_{uN^2}-\mu \|
\end{equation}
and we can conclude similarly using Proposition \ref{smallfluctu} to find $s(\gep)$ such that that $\mu(\cG^c_{s(\gep)})<\gep/2$ for all $N$, and then $u$ large enough.
\medskip

We now prove \eqref{benarbia}. For a fixed $u<-1$, for $A=1-u$, we can find $s(u)$ sufficiently large such that
 \begin{equation}
 P_{t_A}^{\chi}(\mathcal G_{s(u)})\ge \frac 1 2.
 \end{equation}
Using \eqref{gramicho} we obtain that 
\begin{equation} 
\max_{\chi \in \mathcal G_{s(u)} }\quad   \| P^{\chi}_{N^2}-\mu \|_{TV}\le 1-c(s(u),1).
\end{equation}
We can then conclude by using \eqref{croot} for $A=s-1$ and $ \mathcal G=\mathcal G_{s(u)}$, that for large $N$
\begin{equation}
d^{N,k} \left( (8\pi^2)^{-1}N^2\log k+uN^2 \right) <1-\frac{c(s(u),1)}{2}.
\end{equation}
Concerning \eqref{benarbia2}, we choose $s_0$ such that $\mu(\cG^c_{s_0})\ge 1/2$, and  use \eqref{gramicho} and \eqref{croot2} to show that 
\begin{equation}
{\bf d}^{N,k} (uN^2)\le 1-\frac{c(s_0,u)}{2}.
\end{equation}

\end{proof}

\section{Proof of Proposition \ref{smallfluctu}} \label{arratia}

Let us first slightly modify the statement

\begin{proposition}\label{secondfluctu}
There exists a constant $c$ such that
for all $N$ sufficiently large, for all $t\ge 3N^2$
for all $\chi\in \gO_{N,k}$, for all $s\ge 0$
\begin{equation}\label{fluqueton2}
\bbP\left[\exists x,y \in \bbZ_N, \
\left| \sum_{z=x+1}^y (\eta^\chi_t(z)-\bbE[\eta^\chi_t])\right|\ge s \sqrt{k} 
\right] \le 2\exp\left( -c s^2\right).
\end{equation}
\end{proposition}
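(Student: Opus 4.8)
Fix $t\ge 3N^2$ and $\chi$, write $\rho_t(z):=\bbE[\eta^\chi_t(z)]$, and set $\xi_x:=\sum_{z=1}^x(\eta^\chi_t(z)-\rho_t(z))$ for $0\le x\le N$. Because the number of particles is conserved, $\xi_0=\xi_N=0$ and $\sum_{z=x+1}^y(\eta^\chi_t(z)-\rho_t(z))=\xi_y-\xi_x$ for every arc (wrapping ones included), so the event in \eqref{fluqueton2} is contained in $\{\max_{0\le x\le N}|\xi_x|\ge s\sqrt k/2\}$ and it suffices to bound the probability of the latter. A single particle performs the walk whose Laplacian has eigenvalues $\gl_j=2(1-\cos(2\pi j/N))$, so $\rho_t(z)=\sum_{z'}\chi(z')p_t(z',z)$; using $\gl_j\ge 16j^2/N^2$ for $1\le j\le N/2$ one gets $\max_{z,z'}|p_t(z',z)-1/N|\le 4e^{-48}/N$ as soon as $t\ge 3N^2$, whence $\tfrac12\,k/N\le\rho_t(z)\le 2\,k/N$ for all $z$. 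This uniform control of the density — the sole point where $t\ge 3N^2$ is used — is what makes $x\mapsto\xi_x$ behave like a Brownian bridge of total variance of order $k$ rather than a more singular process.

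\textbf{Step 2: a per-arc tail, via Liggett's comparison.} For an arc $J$ put $W_J:=\sum_{z\in J}\eta^\chi_t(z)$ and $m_J:=\bbE W_J=\sum_{z\in J}\rho_t(z)\le 2|J|k/N$ (and $\le k$). Let $(X^1,\dots,X^k)$ be $k$ \emph{independent} copies of the single-particle walk started from the occupied sites of $\chi$, and $\tilde W_J:=\#\{i:X^i_t\in J\}$, a sum of $k$ independent Bernoulli variables with the same mean $m_J$ (the one-point function of the exclusion process coincides with that of independent walks). Liggett's comparison inequality \cite{cf:Lig77} — that the multi-point correlation functions of the exclusion process at distinct sites are dominated by those of the independent system — combined with $\eta(z)^2=\eta(z)$ and, for the opposite sign, with passing to the complementary arc $J^c$ via particle conservation, yields $\bbE[e^{\theta(W_J-m_J)}]\le\bbE[e^{\theta(\tilde W_J-m_J)}]$ for all $\theta\in\bbR$. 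Since $\tilde W_J-m_J$ is a centred sum of independent $[0,1]$-variables of variance $V_J\le m_J$, a Chernoff/Bennett estimate gives, for every arc and every $r\ge 0$,
\[
\bbP\big[|W_J-m_J|\ge r\big]\le 2\exp\!\big(-V_J\,h(r/V_J)\big),\qquad h(u):=(1+u)\log(1+u)-u,
\]
with $V_J\le\min(2|J|k/N,\,k)$: sub-Gaussian with variance proxy of order $|J|k/N$ at moderate deviations, and far stronger (sub-Poissonian) for short arcs.

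\textbf{Step 3: a multiscale bound for the maximum (the hard part).} One cannot union-bound over the $\Theta(N^2)$ arcs: for the $\asymp N$ long arcs ($|J|\asymp N$, variance $\asymp k$) the tail of Step 2 is only $e^{-cs^2}$. Instead one chains along nested nets $T_0\subset\cdots\subset T_{J_{\max}}=\bbZ_N$ with $|T_j|\asymp\min(4^j,N)$ (spacing $\asymp N4^{-j}$), telescoping $\xi_x-\xi_{x_0}=\sum_j(\xi_{\pi_j(x)}-\xi_{\pi_{j-1}(x)})$ through nearest-point projections, so that scale $j$ carries only $\asymp 4^j$ increments, each over a pair at distance $\asymp N4^{-j}$. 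Choosing the scale-$j$ threshold $t_j$ so that $|T_j|\cdot 2\exp(-V_jh(t_j/V_j))\le\mathrm{const}\cdot 4^{-j}e^{-v}$ makes the sum over scales converge with no logarithmic-in-$N$ loss; in the long-arc regime ($4^j\lesssim k$) the $t_j$ are of Gaussian size and $\sum_j t_j=O(\sqrt k(1+\sqrt v))$, while in the short-arc regime the sharpness of the Poisson tail keeps the $t_j$ of constant order, and one may truncate the chain near scale $4^{J^\ast}\asymp N/\sqrt k$, bounding the remainder deterministically by $|\xi_x-\xi_{\pi_{J^\ast}(x)}|\le|x-\pi_{J^\ast}(x)|\lesssim\sqrt k$. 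This produces $\bbP[\max_x|\xi_x|\ge C_1\sqrt k(1+\sqrt v)]\le C_2 e^{-v}$; taking $v$ of order $s^2$ gives $\bbP[\max_x|\xi_x|\ge s\sqrt k/2]\le 2e^{-cs^2}$ for all $s\ge s_0$, the bound being trivial for $s\le s_0$ once $c$ is small, and the event being empty for $s\ge 2\sqrt k$ since $|\xi_x|\le k$ — so no range of $s$ is lost.

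\textbf{Expected main obstacle.} The genuine difficulty is Step 3: since the per-arc tails are only Gaussian for long arcs, one must exploit that $x\mapsto\xi_x$ is a \emph{bridge} with strongly correlated overlapping arcs, and must manage, scale by scale, the crossover between the Gaussian regime (long arcs), the Poissonian ``clumping'' regime (short arcs containing $\lesssim 1$ particle), and a deterministic bound at the finest scales, while keeping every constant — and the final $c$ — independent of $N$ and of the sequence $k(N)$. Getting the $\asymp\log N$-fold bookkeeping loss-free is precisely what makes this step nontrivial; Steps 1 and 2 are essentially routine.
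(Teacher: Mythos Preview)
Your proposal is correct and matches the paper's approach: per-arc sub-Gaussian bounds obtained from Liggett's comparison with independent walkers (the paper's Lemma~\ref{laplacetrans} and Proposition~\ref{ligett}), followed by a dyadic chaining (Lemma~\ref{fixeddev}) with a deterministic truncation at the finest scale using the density bound $\rho_t\le 2k/N$. The only differences are in packaging --- the paper applies Liggett's inequality directly to the product function $\prod_i e^{\theta\ind_{\{x_i\in J\}}}$ (positive definite for every real $\theta$, so no complementary-arc trick is needed), restricts to $|\alpha|\le\log 2$ to stay in the sub-Gaussian regime rather than invoking Bennett, and organizes the chaining as a binary expansion with an $s$-dependent coarsest mesh $2^{q_0}\approx Ns/\sqrt k$ rather than generic nested nets.
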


Proposition \ref{smallfluctu} can be deduced from Proposition \ref{secondfluctu} using the following Lemma which relies on 
heat-kernel estimates. The proof is based on the diagonalization of the Laplace operator on the discrete circle. The method is classic (see e.g \cite{cf:Fort}),
but as we could not find a reference matching exactly our setup and result, we include the proof in the appendix for the sake of completeness.

\begin{lemma}\label{bound}
The following statement hold
\begin{itemize}
\item [(i)] For $N$ large enough, for all $\chi$, all $x\in \bbZ_N$ and $t\ge N^2$
\begin{equation}
\left| \bbE[\eta^\chi_t(x)]-\frac{k}{N} \right| \le 4kN^{-1} e^{-\gl_1 t}.
\end{equation}
In particular 
\begin{equation}\label{labiound}
 \bbE[\eta^\chi_t(x)]\le \frac{2k}{N}.
\end{equation}

\item [(ii)]  If $X_t$ is a nearest neighbor random walk  starting from $x_0\in \bbZ_N$ one has for all $t\ge N^2$, all 
$x,y\in \bbZ_N$
\begin{equation}\label{labiound2}
 \bbP\big[X_t\in [x,y] \big]\le \frac{2\#[x,y]}{N}
\end{equation}
\end{itemize}

\end{lemma}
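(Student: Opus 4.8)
The plan is to reduce everything to the analysis of a single nearest-neighbor random walk on $\bbZ_N$, via the standard fact that for the exclusion process each marginal $\bbE[\eta^\chi_t(x)]$ evolves exactly as $\sum_{y} \chi(y) p_t(y,x)$, where $p_t$ is the heat kernel of continuous-time simple random walk on $\bbZ_N$ (the occupation probabilities of the exclusion process satisfy the same linear ODE as the one-particle semigroup — this is the duality/linearity already used implicitly for $a_1$). Granting this, part (i) becomes a statement purely about $p_t$: I would diagonalize the discrete Laplacian on $\bbZ_N$ in the Fourier basis $e_j(x)=e^{2\pi i jx/N}$, $j\in\{0,\dots,N-1\}$, with eigenvalues $-\gl_j$, $\gl_j = 2(1-\cos(2\pi j/N))$, and write
\begin{equation*}
\bbE[\eta^\chi_t(x)]-\frac{k}{N} = \frac1N\sum_{j=1}^{N-1} e^{-\gl_j t}\,\hat\chi(j)\, e^{2\pi i jx/N},
\end{equation*}
where $\hat\chi(j)=\sum_y \chi(y)e^{-2\pi i jy/N}$ and the $j=0$ term is exactly $k/N$. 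Bounding $|\hat\chi(j)|\le k$ crudely gives $|\bbE[\eta^\chi_t(x)]-k/N|\le \frac{k}{N}\sum_{j=1}^{N-1} e^{-\gl_j t}$, so the task is to show $\sum_{j=1}^{N-1}e^{-\gl_j t}\le 4 e^{-\gl_1 t}$ for $t\ge N^2$ and $N$ large. This is the one genuine computation: using $\gl_j\ge c j^2/N^2$ for $j\le N/2$ (and symmetry $\gl_j=\gl_{N-j}$), one gets $\sum_{j\ge 1} e^{-\gl_j t} = e^{-\gl_1 t}\sum_{j\ge 1} e^{-(\gl_j-\gl_1)t}$, and since $\gl_j-\gl_1 \ge \gl_1(j^2-1)\cdot(\text{const})$ and $\gl_1 t \ge \gl_1 N^2 \sim 4\pi^2$, the geometric-type tail is dominated by the $j=2$ term and sums to something $\le 3 e^{-\gl_1 t}$ for $N$ large; I would just be a little careful to make the constant come out below $4$. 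The "in particular" bound \eqref{labiound} then follows since $e^{-\gl_1 t}\le 1$, giving $\bbE[\eta^\chi_t(x)]\le k/N + 4k/N$... which is too weak, so in fact one wants $t\ge N^2$ to already force $4e^{-\gl_1 t}\le 1$, i.e. $\gl_1 t\ge \log 4$; since $\gl_1 t \gtrsim 4\pi^2 > \log 4$ this holds, yielding $\bbE[\eta^\chi_t(x)]\le 2k/N$.

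For part (ii), the occupation probability $\bbP[X_t\in[x,y]] = \sum_{z\in[x,y]} p_t(x_0,z)$ is handled the same way: $\sum_z p_t(x_0,z) = \frac{\#[x,y]}{N} + \frac1N\sum_{j=1}^{N-1} e^{-\gl_j t}\hat{\ind}_{[x,y]}(j)e^{-2\pi i jx_0/N}$, and bounding the Fourier coefficients of the indicator crudely by $\#[x,y]$ (or, better, by $\min(\#[x,y], 2/|1-e^{2\pi ij/N}|)$ if a sharper estimate were needed, but the crude bound suffices here), the error is at most $\frac{\#[x,y]}{N}\sum_{j=1}^{N-1}e^{-\gl_j t}\le \frac{\#[x,y]}{N}$ for $t\ge N^2$ by the same tail estimate as above. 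This gives $\bbP[X_t\in[x,y]]\le \frac{2\#[x,y]}{N}$.

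The main obstacle is purely bookkeeping: getting the numerical constants (the $4$ in (i), the $2$ in (i) and (ii)) to come out exactly as stated from the spectral sum $\sum_{j=1}^{N-1}e^{-\gl_j t}$ at $t=N^2$. This requires a clean lower bound $\gl_j \ge (8/N^2) j^2$ for $1\le j\le N/2$ (from $1-\cos\theta\ge 2\theta^2/\pi^2$ on $[0,\pi]$, giving $\gl_j\ge (4/\pi^2)(2\pi j/N)^2 = 16 j^2/N^2$, comfortably enough), the symmetry of the spectrum, and then summing a rapidly converging series while tracking that the prefactor stays under the claimed threshold uniformly for large $N$. I expect no conceptual difficulty, only the need to be slightly generous in the intermediate estimates so the final constants are clearly met; the details are routine and, as the authors note, standard heat-kernel analysis, which is why the full argument is deferred to the appendix.
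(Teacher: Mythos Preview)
Your proposal is correct and follows essentially the same route as the paper's appendix proof: reduce to the one-particle heat kernel via the linearity of the one-point function for SSEP, diagonalize the discrete Laplacian on $\bbZ_N$, and bound the spectral tail $\sum_{j\ge 1} e^{-\gl_j t}$ to control the deviation of $p_t(x,\cdot)$ from $1/N$. The only cosmetic difference is in how that tail is estimated: the paper uses the linear lower bound $\gl_j \ge j\gl_1$ to sum a geometric series, obtaining $\sum_{j\ge 1} e^{-\gl_j t} \le (e^{\gl_1 t}-1)^{-1}$ and then invoking $e^{\gl_1 t}\ge 3$ for $t\ge N^2$, whereas you use the quadratic bound $\gl_j \ge c j^2/N^2$; both give the required constants with room to spare.
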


\begin{proof}[Proof of Proposition \ref{smallfluctu}]
We have for all $N$ sufficiently large and $x,y \in \bbZ_N$
\begin{equation}
\sum_{z=x}^y\left| \bbE[\eta^\chi_{t_A}(z)]-\frac{k}{N} \right|\le 4\#[x,y] kN^{-1}e^{\gl_1 t_A}\le 8\sqrt{k} e^{4\pi^2A}.
\end{equation}
Hence we have 
\begin{equation}
\left| \sum_{z=x}^y \eta^\chi_{t_a}(z)-\frac{k}{N} \right|\le \sum_{z=x}^y\left| 
\eta^\chi_{t_A}(z)-\sum_{z=x}^y \bbE[\eta^\chi_{t_A}(z)] \right|+ 8\sqrt{k} e^{4\pi^2A}.
\end{equation}
and thus Proposition \ref{smallfluctu} follows from Proposition \ref{secondfluctu}.
\end{proof}

\begin{rem}
Note that by taking $t= \infty$ with $A=0$ in \eqref{fluqueton2}, we also have a result concerning 
density fluctuation for the equilibrium measure $\mu$ which we will use during our proof.
\begin{equation}\label{fluquetec}
\mu\left( \exists x,y \in \bbZ_N, \
\left| \sum_{z=x+1}^y (\eta(z)-\frac{k}{N})\right|\ge s \sqrt{k} 
\right) \le 2\exp\left( -c s^2\right).
\end{equation}
\end{rem}

The idea of the proof is to control the Laplace transform of the number of particle in each interval (and then we roughly have to sum over all intervals to conclude).
To control this Laplace transform, with use a comparison inequality due to Liggett \cite{cf:Lig77}, which allows us to compare the simple exclusion with a particle system without exclusion, that is: $k$ independent random walks on the circle.
With this comparison at hand, the Laplace transform can be controlled simply by using \eqref{labiound2}.

\subsection{Estimate on the Laplace transform}

From now on the initial condition $\chi$ is fixed, and for convenience, does not always appear in the notation.
For $x\in\bbZ_N$ ($x\in \{1,\dots N-1\}$) and we set 
\begin{equation}\begin{split}\label{defst}
S_{x,y}(t)&:= \sum_{z=x+1}^y (\eta^\chi_t(z)-\bbE[\eta^\chi_t]),\\
S_{x}(t)&:=S_{0,x}(t),
\end{split}
\end{equation}
\begin{lemma}\label{laplacetrans}
For all $x\in \bbZ_N$ and $t\ge N^2$
\begin{equation}\label{gronek}
\bbE\left[e^{\alpha S_x(\eta_t)}\right] \le \exp\left(2\frac{kx}{N} \alpha^2\right).
\end{equation}
\end{lemma}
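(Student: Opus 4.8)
The plan is to use Liggett's comparison inequality from \cite{cf:Lig77} to replace the exclusion process by a system of $k$ independent nearest-neighbor random walks, so that the Laplace transform of $S_x(t) = \sum_{z=1}^x(\eta^\chi_t(z) - \bbE[\eta^\chi_t(z)])$ splits into a product over the $k$ walkers. Concretely, write $N_x(t) := \sum_{z=1}^x \eta^\chi_t(z)$ for the number of particles in the interval $[1,x]$; then $S_x(t) = N_x(t) - \bbE[N_x(t)]$. Liggett's result states (in the form we need) that for any convex function $\phi$, $\bbE[\phi(N_x(t))]$ for the exclusion process is dominated by the corresponding expectation for $k$ independent random walks started from the same initial occupied sites. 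Applying this to the convex functions $\phi(r) = e^{\alpha r}$ and $\phi(r) = e^{-\alpha r}$, it suffices to prove the bound \eqref{gronek} when $N_x(t) = \sum_{i=1}^k \ind_{\{X^i_t \in [1,x]\}}$ with $X^1,\dots,X^k$ independent nearest-neighbor walks on $\bbZ_N$ started from the $k$ occupied sites of $\chi$.

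In the independent-walk picture, $N_x(t)$ is a sum of $k$ independent Bernoulli random variables $B_i := \ind_{\{X^i_t \in [1,x]\}}$ with parameters $q_i := \bbP[X^i_t \in [1,x]]$, so that
\begin{equation}
\bbE\left[e^{\alpha(N_x(t) - \bbE N_x(t))}\right] = \prod_{i=1}^k e^{-\alpha q_i}\left(1 - q_i + q_i e^{\alpha}\right) = \prod_{i=1}^k \left(1 + q_i(e^{\alpha} - 1)\right)e^{-\alpha q_i}.
\end{equation}
Taking logarithms, the problem reduces to the elementary inequality $\log(1 + q(e^{\alpha}-1)) - \alpha q \le 2 q \alpha^2$ valid for all $q \in [0,1]$ and all real $\alpha$ (one checks this by a short Taylor/convexity estimate; a cruder sub-Gaussian bound of the form $\le q\alpha^2/2 + O(q\alpha^3)$ also works, and the constant $2$ in \eqref{gronek} is generous enough to absorb it). Summing over $i$ gives the exponent bounded by $2\alpha^2 \sum_i q_i = 2\alpha^2 \bbE[N_x(t)]$, and by Lemma \ref{bound}(i) we have $\bbE[N_x(t)] = \sum_{z=1}^x \bbE[\eta^\chi_t(z)] \le 2kx/N$ for $t \ge N^2$, which yields the stated bound $\exp(2kx\alpha^2/N)$.

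The main obstacle is checking that Liggett's comparison inequality applies in precisely this form — that is, that the partial sums $N_x(t)$ of the exclusion process are stochastically dominated in the convex order by those of the independent walks, with matching initial condition. This requires citing \cite{cf:Lig77} carefully (the relevant statement there concerns symmetric exclusion and monotone or convex test functions of occupation variables on a set, and the nearest-neighbor symmetric walk on $\bbZ_N$ is the free process to compare against). Once the comparison is in place the rest is a routine Bernoulli Laplace-transform computation together with the heat-kernel bound \eqref{labiound}; no further subtlety arises, and the constant in \eqref{gronek} has been chosen with enough slack that one need not optimize anything.
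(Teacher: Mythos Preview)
Your approach is essentially the paper's: apply Liggett's comparison to pass to $k$ independent walks, then bound each Bernoulli factor using $e^u\le 1+u+u^2$ together with the heat-kernel estimate $q_i\le 2x/N$ from Lemma~\ref{bound}. One small correction: the paper states Liggett's inequality (Proposition~\ref{ligett}) for \emph{symmetric positive-definite} functions on $(\bbZ_N)^k$, not for arbitrary convex functions of the count $N_x$; your exponential test functions are covered because $e^{\alpha N_x}=\prod_i e^{\alpha\ind_{\{x_i\in[1,x]\}}}$ is of product form~\eqref{product} and hence positive definite, but the general ``any convex $\phi$'' framing is broader than what \cite{cf:Lig77} actually provides, so you should invoke the product-form case directly rather than a convex-order statement. (Also note that both arguments tacitly need $|\alpha|\le\log 2$, a restriction absent from the lemma's statement but satisfied wherever it is applied.)
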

\begin{rem}
Of course the formula \eqref{gronek} remains valid for any interval of length $x$ by translation invariance
\end{rem}

\begin{proof}[Proof of Proposition \ref{secondfluctu}]

.
Note that we can a always consider in the proof that $s$ is sufficiently large (as the result is obvious for $s\le ((\log 2)/c)^{1/2}$).
By the triangle inequality, we have for all $x,y\in \bbZ$.

\begin{equation}
\left|S_{x,y}(t)\right| \le   \left|S_x(t)\right|+\left|S_y(t)\right|
\end{equation}
For convenience we decide to replace $s$ by $16s$ in \eqref{fluqueton2} (this only corresponds to changing the value of $c$ by a factor $256$).
Hence it is sufficient to prove that for any $t\ge N^2$ we have
\begin{equation}\label{fluctuation}
\bbP\left[\exists x \in \bbZ_N, \
\left| S_x(t)\right|\ge 8s \sqrt{k}
\right] \le 2\exp\left( -c s^2\right).
\end{equation}
Now let us show that we can replace $ x \in \bbZ_N$ by a smaller subset.
Let $q_0$ be such that 

\begin{equation}\label{defqz}
\frac{Ns}{\sqrt k} < 2^{q_0} \le \frac{2Ns}{\sqrt k}
\end{equation}
For $x\in\{0,\dots N-1\}$ (which we consider as an element of $\bbZ_N$, 
one can find $y\in 2^{q_0} \{0,\dots, \lceil (N-1) 2^{-q_0}\rceil \}$ (a multiple of $2^{q_0}$) such that 
$$y \le  x \le y^+:= \max(y+2^{q_0},N),$$
We have, from the definition of \eqref{defst}
\begin{equation}\label{grimberg}
\begin{split}
S_{x}(t)&\ge S_y(t)-\sum_{z=y+1}^x\bbP[\eta_t(z)=1],\\
S_{x}(t)&\le S_{y^+}(t)+\sum_{z=x+1}^{y^+}\bbP[\eta_t(z)=1]
\end{split}
\end{equation}
From \eqref{labiound} and $$(y^+-y)\le 2^q_0\le 2Ns/(\sqrt{k}),$$
the second term of both equations in \eqref{grimberg} is smaller than $4Ns/(\sqrt{k})$ and hence
\begin{equation}
|S_{x}(t)|\le \max\left( | S_y(t)|, |S_{y^+}(t) |\right)  + 4s\sqrt{k}.
\end{equation}
Thus, we can reduce \eqref{fluctuation} to proving
\begin{equation}\label{fluctuation2}
\bbP\left[\exists y \in  2^{q_0} \{0,\dots ,  \lfloor N 2^{-q_0} \rfloor \},\
 | S_y(t)| \ge 4s \sqrt{k}
\right] \le 2\exp\left( -c s^2\right).
\end{equation}
The next step relies on multi-scale analysis.
Let $p$ be such that $N\in (2^p,2^{p+1}]$.
Given $y\in 2^{q_0} \{0,\dots ,  \lfloor (N-1) 2^{-q_0} \rfloor \}$, we can decompose in base $2$ as follows
\begin{equation}
2^{q_0} y=:\sum_{q=0}^{p-q_0} \gep_q 2^{p-q},
\end{equation}
where $\gep_q\in \{0,1\}$.
We set  $y_{-1}:=0$  and $y_r:=\sum_{q=0}^{r} \gep_q 2^{p-q}$, for $r\in\{0,\dots,p-q_0\}$.\\
Using the triangle inequality again we have $|S_y(t)|\le \sum_{r=0}^{p-q_0}  |S_{y_{r-1},y_r}(t)|,$
and hence 
 \begin{equation}\label{carmniq}
   \left\{ |S_y(t)| \ge 4s \sqrt{k} \right\} \Rightarrow  \left\{  \exists  r\in\{0,\dots,p-q_0\}   |S_{y_{r-1},y_r}(t)| \ge \left(\frac{3}{4}\right)^{r} s\sqrt{k} \right\}.
\end{equation}
Thus, the proof of Proposition \ref{smallfluctu} can be reduced to show the following.
 
 \begin{lemma}\label{fixeddev}
Let us define

\begin{multline}
\mathcal H(s,t):= \Big\{\exists q\in\{q_0,\dots,p\},\  \exists y\in \{1,\dots,\lfloor N2^{-q}\rfloor\},\\  |S_{2^{q}(y-1), 2^{q}y}(t)| \ge \left(\frac{3}{4}\right)^{p-q} s\sqrt{N}\Big\}
\end{multline}
For every $t\ge N^2$ we have
\begin{equation}
\bbP[\mathcal H(s,t)]\le 2e^{-cs^2}
\end{equation}
\end{lemma}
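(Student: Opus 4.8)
The plan is to prove Lemma~\ref{fixeddev} by a two–index union bound: over the scale $q\in\{q_0,\dots,p\}$ and, for each scale, over the $\lfloor N2^{-q}\rfloor$ dyadic blocks $[2^q(y-1),2^qy]$, each such block being an interval of length $2^q$. The only probabilistic input needed is the exponential moment bound of Lemma~\ref{laplacetrans}, combined with a Chernoff estimate. By the translation invariance noted after Lemma~\ref{laplacetrans}, for every $\alpha\in\bbR$ and $t\ge N^2$,
\begin{equation*}
\bbE\big[e^{\alpha S_{2^q(y-1),2^qy}(t)}\big]\le \exp\Big(2\tfrac{k2^q}{N}\alpha^2\Big),
\end{equation*}
so a standard sub-Gaussian/Chernoff bound (optimising over $\alpha$, the estimate being even in $\alpha$ so both tails are covered) gives $\bbP\big[|S_{2^q(y-1),2^qy}(t)|\ge a\big]\le 2\exp\big(-a^2N/(8k2^q)\big)$ for every block at scale $q$.

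The heart of the matter is the numerology. Taking for $a$ the threshold $(3/4)^{p-q}s\sqrt k$ appearing in \eqref{carmniq} (so $a^2=(9/16)^{p-q}s^2k$), the exponent above equals $-\tfrac18(9/16)^{p-q}s^2\,(N/2^q)$, and since $N\in(2^p,2^{p+1}]$ one has $N/2^q\ge 2^{p-q}$, hence the per-block probability is at most $2\exp\big(-\tfrac18(9/8)^{p-q}s^2\big)$. Thus at scale $q$ the required deviation is smaller by $(3/4)^{p-q}$, which costs a factor $(9/16)^{p-q}$ in the Gaussian exponent, whereas there are only $\lfloor N2^{-q}\rfloor\le 2^{p-q+1}$ blocks to control at that scale; because $(9/16)\cdot 2=9/8>1$, the growth in cardinality is more than absorbed. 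Summing over $y$ and then over $q$, and putting $j:=p-q$,
\begin{equation*}
\bbP[\mathcal H(s,t)]\le \sum_{q=q_0}^{p}2^{p-q+1}\cdot 2\exp\Big(-\tfrac18(9/8)^{p-q}s^2\Big)\le 4\sum_{j\ge 0}2^{j}\exp\Big(-\tfrac18(9/8)^{j}s^2\Big).
\end{equation*}

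To finish I would reduce to the case of $s$ large (for bounded $s$ the asserted bound $2e^{-cs^2}$ exceeds $1$ once $c$ is small enough, so it is trivial). Using $(9/8)^j\ge 1+j\log(9/8)$, the general term is bounded by $e^{-s^2/8}\exp\big(j(\log 2-\tfrac18 s^2\log(9/8))\big)$, so as soon as $s^2\ge 16\log 2/\log(9/8)$ the sum over $j$ is dominated by a convergent geometric series and is $\le 8e^{-s^2/8}$; enlarging $s$ slightly more absorbs the constant, yielding $\bbP[\mathcal H(s,t)]\le 2e^{-cs^2}$ with an absolute $c>0$. I expect this numerology to be the only real obstacle — one must check that the geometric decay rate $3/4$ of the thresholds carried over from \eqref{carmniq}, weighed against the factor $N\ge 2^p$ and the number of dyadic intervals at each scale, is fast enough for the multiscale union bound to close with a \emph{uniform} $e^{-cs^2}$ tail; the remaining ingredients are the black-box estimate of Lemma~\ref{laplacetrans} (itself a consequence of Liggett's comparison inequality) and an elementary Chernoff bound.
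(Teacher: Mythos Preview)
Your proposal is correct and follows essentially the same route as the paper: a two-index union bound over scales $q$ and dyadic blocks $y$, the Chernoff/sub-Gaussian estimate from Lemma~\ref{laplacetrans} applied to each block, and the observation that $N\ge 2^p$ turns the per-block exponent into $-(9/8)^{p-q}s^2/8$, which beats the $2^{p-q+1}$ entropy; the paper obtains exactly the same bound and sums it the same way.

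One small remark: the paper's proof additionally verifies that the optimal Chernoff parameter satisfies $\alpha_0\le \log 2$, because Lemma~\ref{laplacetrans} is actually only established (via Lemma~\ref{laplacetrans2}) under that restriction, and this is precisely where the lower cutoff $q\ge q_0$ from \eqref{defqz} is used. You optimise over all $\alpha$, which is legitimate if one reads Lemma~\ref{laplacetrans} as stated (with no constraint on $\alpha$), but you should be aware that the role of $q_0$ in the paper's argument is exactly to keep $\alpha_0$ in the admissible range.
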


Indeed from  \eqref{carmniq} and the reasoning taking place before, one has
$$   \Big\{  \exists x\in \bbZ_N,  |S_{x}(t)| \ge  8s\sqrt{N}\Big\} \subset \mathcal H(s,t).$$
\end{proof}

\begin{proof}[Proof of Lemma \ref{fixeddev}]
We have by union bound
\begin{multline}\label{croco}
\bbP[\cH(s,t)]\le  \sum_{q=q_0}^p \sum_{y=1}^{\lfloor N 2^{-q} \rfloor} \bbP\left[ |S_{2^{q}(y-1), 2^{q}y}(t)| \ge \left(\frac{3}{4}\right)^{p-q} s\sqrt{k}  \right]
\\ \le \sum_{q=0}^p  2^{p+1-q} \max_{y} \bbP\left[   |S_{2^{q}(y-1), 2^{q}y}(t)| \ge\left(\frac{3}{4}\right)^{p-q} s\sqrt{k}  \right].
\end{multline}
Thus we have to find a bound on 
$$\bbE\left[ |S_{2^{q}(y-1), 2^{q}y}(t)| \le \left(\frac{3}{4}\right)^{p-q} s\sqrt{k}  \right]$$
which is uniform in $y$ and is such that the sum in the second line of  \eqref{croco} is smaller than $2e^{-cs^2}$.
For what follows we can, without loss of generality consider only the case $y=1$, as all the estimates we use are invariant by translation on $\bbZ_N$.

\medskip

Using Lemma \ref{laplacetrans} and the Markov inequality, we have for any positive $\alpha\le \log 2$
\begin{equation}\label{chernov}
  \bbE\left[  |S_{2^{q}}(t)|\ge \left(\frac{3}{4}\right)^{p-q} s\sqrt{k} \right] 
 \le \exp\left( 2^{q+1}\alpha^{2}\frac{k}{N}-\alpha\sqrt{k}\left(\frac{3}{4}\right)^{p-q} s\right).
  \end{equation}
We can check that the right-hand side is minimized for
 $$\alpha=\alpha_{0}:=2^{-(q+2)}\left(\frac{3}{4}\right)^{p-q} s \frac{N}{\sqrt{k}},$$
Note that for all $q\ge q_0$, (recall \eqref{defqz}) one has
\begin{equation}
\alpha_{0}\le 2^{-(q_0+2)}\left(\frac{3}{4}\right)^{p-q_0} s \frac{N}{\sqrt{k}}\le  \frac{1}{4}\left(\frac{3}{4}\right)^{p-q_0}\le \log 2.
\end{equation}
This ascertains the validity of \eqref{chernov}, and hence 
\begin{equation}
  \bbE\left[  |S_{2^{q}}(t)|\ge \left(\frac{3}{4}\right)^{p-q} s\sqrt{k} \right]
\le e^{-s^{2}2^{q-3}N\left(\frac{3}{4}\right)^{2(p-q)}}.
\end{equation}
Using the fact that $N\ge 2^p$ we have
\begin{equation}
\bbP\left[ |S_{2^{q}}(t)| \ge \left(\frac{3}{4}\right)^{p-q} s\sqrt{k}\right]
\le e^{-\frac{s^{2}}{8}\left(\frac{9}{8}\right)^{(p-q)}}.
\end{equation}
Using this in \eqref{croco} allows us to conclude (choosing $c$ appropriately).

\end{proof}

\subsection{Proof of Lemma \ref{laplacetrans}}

We use a result of Liggett \cite{cf:Lig77} which provides a way to compare the simple exclusion with a simpler process composed of independent random walkers.
If $f$ is a symmetric function on $(\bbZ_N)^k$ and $\eta\in \gO_{N,k}$ we set
\begin{equation}\label{feta}
f(\eta):=f(y_1,y_2,\dots,y_k)
\end{equation}
where 
$$\{y_1,\dots,y_k\}:=\{x \ | \ \eta(x)=1\}.$$
The above relation defines $(y_1,\dots,y_k)$ modulo permutation which is sufficient for the definition \eqref{feta}.
We say that a function $f$ defined on $(\bbZ_N)^2$ is positive definite if and only if for all $\gb$ such that $\sum_{x\in \bbZ_N}\gb(x)=0$, we have
\begin{equation}\label{defpov}
\sum_{x,y\in \bbZ^2_N} \gb(x)\gb(y) f(x,y)\ge 0.
\end{equation}
We say that a function defined on $(\bbZ_N)^k$ is positive definite 
if all its two dimensional marginals are. Note in particular a function that can be written in the form
\begin{equation}\label{product}
f(x_1,\dots, x_k)= C\prod_{i=1}^k g(x_k)
\end{equation}
where $C$ is a positive constant and $g$ is a non-negative function on $\bbZ_N$, is definite positive.
\medskip
Given $\chi\in \gO_{N,k}$,
let ${\bf X}^{\chi}_t:=(X^{\chi}_1(t),\dots, X^{\chi}_k(t))$ denote a set of independent random walk on $\bbZ_N$, starting from initial condition 
${\bf x}^{\chi}:=(x^{\chi}_1,\dots,x^{\chi}_k)$ which satisfies
\begin{equation}\label{codin}
\{x^\chi_1,\dots,x^{\chi}_k\}:=\{x \ | \ \chi(x)=1\}.
\end{equation}
Of course \eqref{codin} defines $(x^{\chi}_1,\dots,x^{\chi}_k)$ only modulo permutation but this has no importance for what we are doing (e.g. we can fix $(x^0_1,\dots,x^0_k)$ to be 
minimal for the lexicographical order).

\begin{proposition}\label{ligett}
If $f$ is a symmetric definite positive then we have for all $t\ge 0$
\begin{equation}
 \bbE\left[f(\eta^\chi_t)\right]\le \bbE[f({\bf X}^{\chi}_t)]
\end{equation}
\end{proposition}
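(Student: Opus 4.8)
The plan is to follow the strategy of Liggett \cite{cf:Lig77}, which amounts to comparing two semigroups by an ODE/Gronwall argument: if $(S_t)$ is the semigroup of the exclusion process acting on symmetric functions on $\Omega_{N,k}$ and $(\bar S_t)$ is the semigroup of $k$ independent random walks acting on symmetric functions on $(\bbZ_N)^k$, then the point is to show that the difference $\bar S_t f - S_t f$, evaluated on a configuration, stays nonnegative when $f$ is symmetric and positive definite. The key structural fact is that the exclusion generator and the independent-walk generator, when restricted to symmetric functions, differ only by the ``collision'' terms: when two particles sit on adjacent sites $x, x+1$, the independent dynamics allows the swap (which does nothing to a symmetric function) plus the two moves $x\to x+1$ and $(x+1)\to x$ onto the occupied site, while the exclusion dynamics simply forbids those two moves. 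So, writing $\cL$ for the exclusion generator and $\bar\cL$ for the independent one, one gets an explicit formula for $(\bar\cL - \cL)f$ as a sum, over adjacent occupied pairs, of second-difference-type expressions in the two colliding coordinates.

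First I would set up the standard interpolation identity
\begin{equation}
\bar S_t f(\eta) - S_t f(\eta) = \int_0^t \bar S_{t-r}\big((\bar\cL - \cL) S_r f\big)(\eta)\dd r,
\end{equation}
valid because both semigroups agree at $t=0$. Since $\bar S_{t-r}$ is a Markov semigroup (it preserves nonnegativity), it suffices to show that $(\bar\cL - \cL)g \ge 0$ pointwise whenever $g = S_r f$ is itself symmetric and positive definite. Thus the argument has two ingredients: (a) the class of symmetric positive definite functions is preserved by the exclusion semigroup $S_r$, and (b) for any symmetric positive definite $g$, one has $(\bar\cL - \cL)g \ge 0$.

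For (b), using the collision description above, $(\bar\cL-\cL)g$ evaluated at a configuration with occupied sites $\{y_1,\dots,y_k\}$ is a sum over pairs $(y_i,y_j)$ with $|y_i - y_j| = 1$ of terms of the form $g(\dots,y_i,\dots,y_j,\dots)$ compared with $g$ evaluated at the two ``pushed'' configurations; after symmetrizing, each such term reduces to an expression of the shape $\sum_{x,y}\beta(x)\beta(y)\, g(x,y,\text{rest})$ for a suitable signed measure $\beta$ with $\sum\beta = 0$ supported near the colliding pair — precisely the object appearing in \eqref{defpov}. Hence positive definiteness of the two-dimensional marginals of $g$ gives nonnegativity term by term. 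For (a), the standard route is a duality/commutation argument: one checks that $\cL$ maps (two-dimensional marginals of) positive definite functions to positive definite functions — equivalently, that the quadratic form in \eqref{defpov} is preserved infinitesimally — which again comes down to the same collision computation plus the fact that the single-particle walk on $\bbZ_N$ preserves positive definiteness (its generator is the discrete Laplacian, which is itself negative semidefinite, so $e^{t\cL_1}$ is a contraction preserving the relevant cone). Then $S_r$ preserves the cone by exponentiation.

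I expect the main obstacle to be ingredient (a), the invariance of the positive-definite cone under the exclusion semigroup: this is the genuinely nontrivial part of Liggett's theorem and requires handling the fact that positive definiteness is a condition on \emph{two-dimensional marginals} of a $k$-variable function, so one must track how the $k$-particle dynamics acts on pair marginals — which it does not do autonomously. The resolution (following \cite{cf:Lig77}) is to work with the explicit commutation identity for the generator on the cone rather than trying to close an equation for the marginals directly; the bookkeeping of the collision terms, and verifying that all resulting quadratic forms have the correct sign and zero-sum structure, is where the real work lies. Everything else — the interpolation identity, the Markov property of $\bar S$, and the reduction of the final inequality to \eqref{defpov} — is routine.
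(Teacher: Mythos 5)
Your ingredient (b) --- the pointwise inequality $(\cL'-\cL)g\ge 0$ for symmetric positive definite $g$, obtained by writing the collision terms as quadratic forms $\sum_{x,y}\gb(x)\gb(y)g(x,y,\cdot)$ with $\gb=\ind_{\{x=x_i\}}-\ind_{\{x=x_j\}}$ --- is exactly the paper's computation and is fine. The gap is in your choice of interpolation ordering, which forces you to rely on ingredient (a), and (a) is neither proved nor needed. You interpolate via $\int_0^t \bar S_{t-r}\bigl((\bar\cL-\cL)S_rf\bigr)\dd r$, with the exclusion semigroup on the \emph{inside}; this requires showing that the exclusion semigroup preserves the cone of positive definite functions, which you yourself identify as ``the genuinely nontrivial part'' and then only gesture at (``the explicit commutation identity for the generator on the cone''). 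That sketch does not close: the exclusion generator does not act autonomously on pair marginals, and the single-particle contraction argument you invoke says nothing about how the collision terms act on the quadratic form in \eqref{defpov}. There is also a domain problem with your ordering: $S_rf$ and $\cL$ live only on $\gO_{N,k}$ (distinct coordinates), while the outer semigroup $\bar S_{t-r}$ needs the integrand defined on all of $(\bbZ_N)^k$, so you would first have to extend the exclusion dynamics (e.g.\ to a stirring process) and then re-examine the difference of generators at configurations with coinciding particles.

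The paper avoids both issues by interpolating the other way:
\begin{equation}
Q_t f-P_t f=\int_0^t P_{t-s}\bigl((\cL'-\cL)Q_sf\bigr)\dd s ,
\end{equation}
with the independent-walk semigroup $Q_s$ on the inside. Then the only cone-preservation fact needed is that $Q_s$ preserves positive definiteness, which is immediate because $Q_s$ factorizes over particles: the quadratic form transforms by $\gb\mapsto\gb'(y)=\sum_x p_s(x,y)\gb(x)$, which still sums to zero. Moreover $(\cL'-\cL)Q_sf$ only has to be evaluated at points of $\gO_{N,k}$, where both generators make sense, and the outer operator $P_{t-s}$ never leaves $\gO_{N,k}$. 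With this flip, your step (b) finishes the proof and step (a) disappears entirely.
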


\begin{proof}
The proof in the case $k=2$ is detailed in \cite[Proof of Lemma 2.7]{cf:Lig74} perfectly adapts to the case of general $k$.
We include it here for the sake of completeness. We let $\cL'$ denote the generator of $k$ independent random walks and 
$Q_t$ the associated semigroup. We define the action of $P_t$ and $Q_t$ on functions as follows
\begin{equation}\begin{split}
P_tf(\chi)&:= \sum_{\eta \in\gO_{N,k}} P_t(\chi,\eta)f(\eta),\\
Q_tf({\bf x})&:= \sum_{{\bf y} \in (\bbZ_N)^k} Q_t({\bf x},{\bf y})f({\bf y}).
\end{split}\end{equation}
As $Q_t$ is invariant by permutations of the labels, $Q_tf$ is also is a symmetric function and we can also consider $Q_tf(\eta)$ for $\eta\in \gO$ (cf. \eqref{feta})

\medskip
Using the standard property of Markov semi-group $\partial_t P_t:=\cL P_t=P_t \cL$, 
we have 
\begin{equation}
 \left[Q_t(f)-P_t(f)\right](\chi)= \int^t_0 \partial_s \left[ P_{t-s} Q_s(f)(\chi)\right] \dd s\\=
 \int_{0}^t P_{t-s} \left( \cL'-\cL \right) Q_{s} f(\chi)\dd s.
\end{equation}
and our conclusion follows if we can prove that $\left( \cL'-\cL \right)P_{t-s} f(\chi)\ge 0$ for all $\chi$ (as composition by $P_{s-t}$ preserves positivity).
First note that $g:=Q_{s}(f)$ is definite positive. Indeed if $\sum_{x\in \bbZ_N}\gb(x)=0$ we have  
\begin{multline}\label{ffesd}
 \sum_{x_1,x_2\in \bbZ_N} \gb(x_1)\gb(x_2)Q_sf(x_1,x_2,\dots,x_k)\\
 =\sum_{y_3,\dots,y_N\in \bbZ_N} \left(\prod_{j\ge 3} p_t(x_j,y_j) \right)\left( \sum_{y_1,y_2\in \bbZ_N}  \gb'(y_1)\gb'(y_2)
 f(y_1,y_2,\dots,y_k)\right).
\end{multline}
where $p_t$ is the discrete heat kernel on the circle.
and $$\gb'(y):=\sum_{x\in \bbZ_N} p_t(x,y) \gb(x)$$ 
satisfies $\sum_{y\in \bbZ_N} \gb'(y)=0$. Thus the l.h.s of \eqref{ffesd} is non-negative as a sum of non-negative terms.
Then, notice that the generator $\cL'$ includes all the transition of $\cL$ but also allows particle to jump on a neighboring site even if it is occupied.
Hence we have, for all ${\bf x}$ with distinct coordinates 
\begin{equation}\label{dsasdaa}
 (\cL'-\cL)g({\bf x})= \sum_{\{i<j \ | \ x_i\sim x_j\}} g(\cdot,x_i,\cdot,x_i,\cdot)+g(\cdot,x_j,\cdot,x_j,\cdot)-2g(\cdot,x_i,\cdot,x_j,\cdot).
\end{equation}
where in the right-hand side, only the $i$-th and $j$-th coordinate appears in the argument of $g$.
Note that each term in the r.h.s.\  of \eqref{dsasdaa} is of the form \eqref{defpov} with $\gb(x):= \ind_{\{x=x_i\}}-\ind_{\{x=x_j\}}$ 
(recall that $g$ is symmetric) and thus is positive.
\end{proof}

We want to apply Proposition \ref{ligett} to the function 
\begin{equation}
f(x_1,\dots,x_k):=e^{\alpha \sum_{i=1}^k \left(\ind_{\{x_i\in [1,y]\}}-\bbP\left[X_i(t)\in[1,y]  \right] \right) }
\end{equation}
for $y\in \{1,\dots, N\}.$ Note that it is of the form \eqref{product} and thus is definite positive.

\begin{lemma}\label{laplacetrans2}
For all $N$ sufficiently large
for all $\alpha\in \bbR$, $|\alpha|\le \log 2$ and for all $t\ge N^2$, we have 
\begin{equation}\label{criolodoido}
\bbE\left[e^{\alpha \sum_{i=1}^k \left(\ind_{\{X_i(t)\in [1,y]\}}-\bbP\left[X_i(t)\in[0,y] \right]\right)}\right] \le \exp\left( \frac{2ky}{N} \alpha^2\right).
\end{equation}

\end{lemma}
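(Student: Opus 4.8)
The plan is to reduce the claim to a statement about a single random walk and then apply independence. Since $X_1(t),\dots,X_k(t)$ are independent, the left-hand side of \eqref{criolodoido} factorizes as
\begin{equation*}
\prod_{i=1}^k \bbE\left[e^{\alpha \left(\ind_{\{X_i(t)\in [1,y]\}}-\bbP[X_i(t)\in[1,y]]\right)}\right].
\end{equation*}
So it suffices to bound each factor by $\exp\left(\frac{2y}{N}\alpha^2\right)$, since then the product is bounded by $\exp\left(\frac{2ky}{N}\alpha^2\right)$. For a single walk, let $B_i$ be the Bernoulli variable $\ind_{\{X_i(t)\in[1,y]\}}$ with mean $p_i:=\bbP[X_i(t)\in[1,y]]$. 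Then each factor is $\bbE[e^{\alpha(B_i-p_i)}]$, the centered moment generating function of a Bernoulli variable.

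The key input is Lemma \ref{bound}(ii): for $t\ge N^2$ we have $p_i=\bbP[X_i(t)\in[1,y]]\le \frac{2\#[1,y]}{N}=\frac{2y}{N}$ (here $\#[1,y]=y$ since $1\le y\le N$). Now I would use a standard bound on the centered MGF of a Bernoulli random variable: for $|\alpha|\le \log 2$ and a Bernoulli$(p)$ variable $B$, one has $\bbE[e^{\alpha(B-p)}]\le e^{2p\alpha^2}$. To see this, write $\bbE[e^{\alpha(B-p)}]=e^{-\alpha p}(1-p+pe^{\alpha})=e^{-\alpha p}(1+p(e^\alpha-1))\le \exp\left(-\alpha p+p(e^\alpha-1)\right)=\exp\left(p(e^\alpha-1-\alpha)\right)$, using $1+x\le e^x$. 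Then for $|\alpha|\le\log 2$ one checks $e^\alpha-1-\alpha\le 2\alpha^2$ (the function $\alpha\mapsto (e^\alpha-1-\alpha)/\alpha^2$ is increasing, and at $\alpha=\log 2$ its value is $(1-\log 2)/(\log 2)^2<2$; for $\alpha<0$ it is even smaller). Hence $\bbE[e^{\alpha(B-p)}]\le e^{2p\alpha^2}$, and plugging in $p=p_i\le 2y/N$ gives each factor $\le e^{(4y/N)\alpha^2}$ — actually this yields the constant $4$ rather than $2$; to recover $2$ one should instead bound $e^\alpha-1-\alpha\le \alpha^2$ for $|\alpha|\le \log 2$ which is false at $\alpha=\log 2$, so in fact the bound as stated with constant $2$ relies on $p_i\le 2y/N$ combined with $e^\alpha-1-\alpha\le \alpha^2$ only for a slightly smaller range, or on being slightly more careful. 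In practice the cleanest route is: $e^{-\alpha p}(1-p+pe^\alpha)$, bound $e^\alpha\le 1+\alpha+\alpha^2$ for $\alpha\le\log 2$ (which does hold since $e^{\log 2}=2\le 1+\log 2+(\log 2)^2\approx 2.17$) and $e^{-\alpha p}\le 1$ for $\alpha p\ge 0$, yielding $\le 1+p\alpha^2\le e^{p\alpha^2}\le e^{(2y/N)\alpha^2}$ when $\alpha\ge 0$; for $\alpha<0$ symmetrize by noting $|\alpha|\le\log 2$ still gives $e^\alpha\le 1+\alpha+\alpha^2$.

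Thus the steps are: (1) factorize using independence; (2) invoke Lemma \ref{bound}(ii) to get $\bbP[X_i(t)\in[1,y]]\le 2y/N$; (3) bound the centered Bernoulli MGF by $e^{(2y/N)\alpha^2}$ using the elementary inequality $e^\alpha\le 1+\alpha+\alpha^2$ valid on $|\alpha|\le\log 2$ together with $1+x\le e^x$; (4) take the product over $i=1,\dots,k$. The main obstacle — really the only nontrivial point — is getting the constant right in step (3): one must check that the elementary bound $e^\alpha-1-\alpha\le \alpha^2$ (equivalently $e^\alpha\le 1+\alpha+\alpha^2$) indeed holds on the full interval $|\alpha|\le \log 2$, which is where the hypothesis $|\alpha|\le\log 2$ is used, and then combine it with the density bound $p_i\le 2y/N$ so that the exponent reads exactly $\frac{2ky}{N}\alpha^2$. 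Everything else is routine.
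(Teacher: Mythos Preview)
Your approach is the same as the paper's: factorize by independence, invoke Lemma~\ref{bound}(ii) for the bound $p_i\le 2y/N$, and control the centered Bernoulli MGF via an elementary exponential inequality. However, your execution gets tangled, and one of your claims is actually wrong.

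You assert that $e^\alpha-1-\alpha\le \alpha^2$ ``is false at $\alpha=\log 2$''. It is not: at $\alpha=\log 2$ the left side is $2-1-\log 2\approx 0.307$ and the right side is $(\log 2)^2\approx 0.480$. In fact $e^x\le 1+x+x^2$ holds on the whole interval $|x|\le \log 2$ (check that $g(x)=1+x+x^2-e^x$ has $g(0)=g'(0)=0$ and $g''(x)=2-e^x\ge 0$ there). Once you accept this, your own intermediate bound $\bbE[e^{\alpha(B-p)}]\le \exp\bigl(p(e^\alpha-1-\alpha)\bigr)$ immediately gives $\le e^{p\alpha^2}\le e^{(2y/N)\alpha^2}$, with the correct constant $2$; there is no need for the ``cleanest route'' paragraph (which, incidentally, is broken: bounding $e^{-\alpha p}\le 1$ throws away the $-\alpha p$ that should cancel the linear term, so you do not get $1+p\alpha^2$).

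The paper avoids all this bookkeeping by applying $e^x\le 1+x+x^2$ directly to the random variable $Z:=\alpha\bigl(\ind_{\{X_1(t)\in[1,y]\}}-p\bigr)$, which satisfies $|Z|\le|\alpha|\le\log 2$. Taking expectations kills the linear term since $\bbE[Z]=0$, and $\bbE[Z^2]=\alpha^2\var(B)\le \alpha^2 p\le 2\alpha^2 y/N$, giving $\bbE[e^Z]\le 1+2\alpha^2 y/N\le \exp(2\alpha^2 y/N)$ in one line. This is the tidy version of what you were circling around.
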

To deduce \eqref{gronek} from \eqref{criolodoido} we just have to remark that 
$$\sum_{i=1}^k \bbP\left[X_i(t)\in[1,y] \right]=\sum_{z=1}^y \sum_{i=1}^k p_t(x^0_i,z)= \sum_{z=1}^y \bbP[\eta_t=1].$$

\begin{proof}[Proof of Lemma \ref{laplacetrans2}]

We using the inequality 
$$\forall |x|\le |\log 2|,\  e^x\le 1+x+x^2$$
for the variable $Z=\alpha \left(\ind_{X_1(t)\in [1,y]}-\bbP[X_1(t)\in [1,y]\right)$.
As $\bbE[Z]=0$ and from Lemma \ref{bound} $(ii)$ we have for all $t\ge N^2$,
$$\bbE[Z^2]\le \bbP\left[X_1(t)\in[1,y] \right] \le 2y/N,$$
the integrated inequality gives
\begin{equation}
\bbE\left[ e^{\alpha \left(\ind_{\{X_1(t)\in [1,y]\}}-\bbP[X_1\in [1,y]\right)} \right]\le  1+\frac{2\alpha^2y}{N}
\le \exp(2\alpha^2 (y/N)).
\end{equation}

The independence of the $X^i$s is then sufficient to conclude.
\end{proof}

\section{Coupling $P_t^\chi$ with the equilibrium using the corner-flip dynamics}

In this section we present the main tool which we use to prove Proposition \ref{csdf}: the corner-flip dynamics.
The idea is to associate to each $\eta\in \gO$ an height function, and consider the dynamics associated with this rate function instead of the original one 
and use monotonicity properties of this latter dynamics. This idea is is already present in the seminal paper of Rost investigating 
the asymetric exclusion on the line
\cite{cf:Rost} and became since a classical tool in the study of particle system. 
In particular it is used e.g.\ in \cite{cf:Wilson, cf:Lac} to obtain bounds on the mixing time for the exclusion on the line. 
It has also been used as a powerful tool for the study of mixing of monotone surfaces starting with \cite{cf:Wilson}, and more recently in \cite{cf:CMST, cf:CMT}.

\medskip

Let us stress however that in \cite{cf:Wilson, cf:Lac}, the interface represention is used mostly as graphical tool in order to have a better intuition on 
an order that can be defined directly on $\gO_{N,k}$.
In the present we use the interface representation to construct a coupling which cannot be constructed considering only the original chain.
In particular, note that our coupling is Markovian for the corner-flip dynamics but not for the underlying particle system.

\label{fluctuat}
\subsection{The $\xi$ dynamics}

Let us consider the set of height functions of the circle.
\begin{equation}
\gO'_{N,k}:=\left\{\xi: \bbZ_N\to \bbR \ | \
\xi(x_0)\in \bbZ, \forall x\in \bbZ_N,\   \xi(x)-\xi(x+1)\in \big\{-\frac k N, 1-\frac k N \big\} \right\}.
\end{equation}

Given $\xi$ in $\gO'_{k,N}$, we define $\xi^x$ as
\begin{equation}
 \begin{cases}
\xi^x(y)&=\xi(y), \quad \forall y \ne x,\\
\xi^x(x)&=\xi(x+1)+\xi(x-1)-2\xi(x).
 \end{cases}
\end{equation}
and we let $\xi_t$ be the irreducible Markov chain on $\gO'_{m,N}$ whose transition rates $p$ are given by 
\begin{equation}\label{cromik}
 \begin{cases}
 p(\xi,\xi^x)&=1, \quad \forall x\in \bbZ_N, \\
 p(\xi,\xi')&=0, \quad \text{if } \xi'\notin \{ \xi^x \ | \ x\in \bbZ_N\}.
 \end{cases}
\end{equation}
We call this dynamics the corner-flip dynamics, as the transition $\xi\to\xi^x$ corresponds to flipping
either a local maximum of $\xi$ (a "corner" for the graph of $\xi$)  to a local minimum \textit{e vice versa}.
It is, of course, not positive recurrent, as the state space is infinite and  the dynamics is left invariant by vertical translation. However
it is irreducible and recurrent.

\medskip

The reader can check (see also Figure \ref{partisys}) that $\gO'_{N,k}$ is mapped onto $\gO_{N,k}$, by the transformation 
$\xi \mapsto \grad \xi$ defined by
\begin{equation}\label{projec}
\grad \xi (x):=\xi(x)-\xi(x-1)+\frac{k}{N},
\end{equation}
and that
the image $\grad \xi_t$ of the corner-flip dynamics $\xi_t$ under this transformation is the simple exclusion,
down and up-flips corresponding to jumps $x\to x+1$ and $x\to x-1$ of the particles respectively.
There is a natural order on the set $\gO'_{N,k}$ 
 defined by 
\begin{equation} \label{deforder}
\xi\ge \xi' \quad \Leftrightarrow \quad \forall x\in \bbZ_N,\ \xi(x)\ge \xi'(x),
\end{equation}
which has the property of being preserved by the dynamics in 	certain sense (see Section \ref{grafff} for more details).

\begin{figure}[hlt]
 \epsfxsize =9.5 cm
 \begin{center}
 \epsfbox{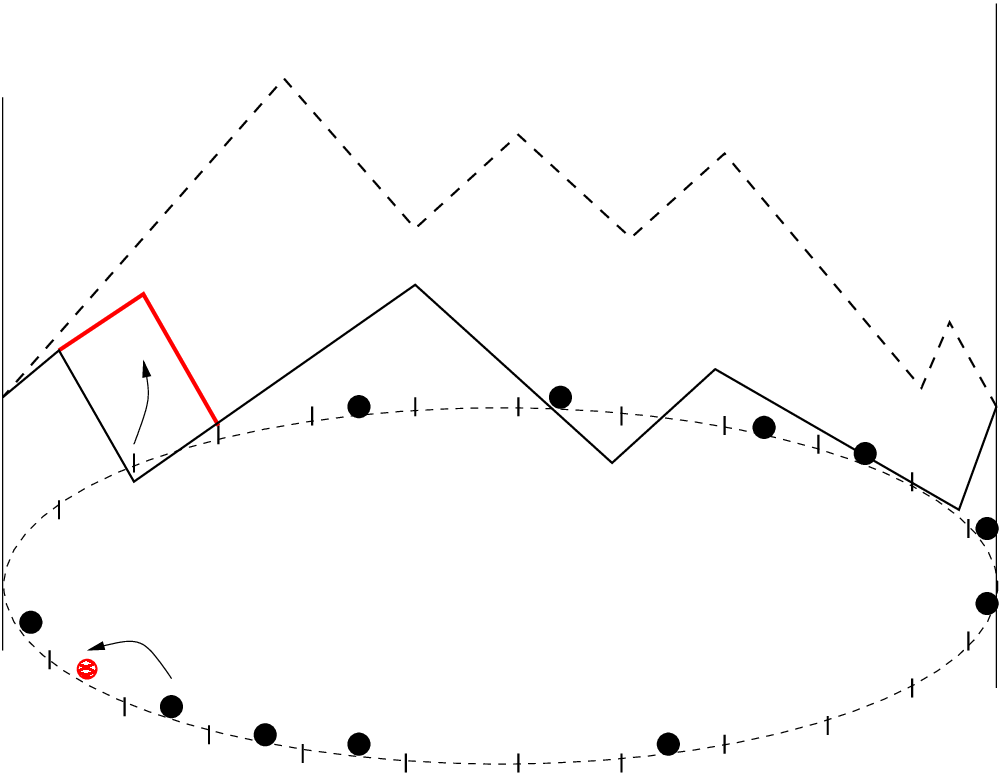}
 \end{center}
 \caption{\label{partisys}  The correspondence between the exclusion process and the corner-flip dynamics.
 In red, a particle jump and its corner-flip counterpart are defined. Note that this is not a one-to-one mapping as a particle configuration gives the height function only modulo translation (the height function is drawn on a cylinder whose base is the circle on which the particles are moving).}
 \end{figure}

Given $\chi\in \gO_{m,k}$, we define $(\xi^0_t)$ to be a process with transitions \eqref{cromik} starting from initial condition
 \begin{equation}\label{fryied}
 \xi_{0}^0(x):=\sum_{z=0}^x \chi (x)-\frac{kx}{N},
\end{equation}
It follows from the above remark that fora all $t\ge 0$ we have
\begin{equation}\label{laloi}
 \bbP\left[ \grad \xi^0_t\in \cdot \right]=P^\chi_{t}.
\end{equation}

Our idea is to construct another dynamic $\xi^1_t$ which starts from a stationary condition (the gradient is distributed according to $\mu$) and to try to couple it with $\xi^0_t$ within time $O(L^2)$.
The difficulty here lies in finding the right coupling.

\subsection{Construction of the initial conditions $\xi^1_0$ and $\xi^2_0$}

In fact we define not one but two stationary dynamics $\xi^1_t$ and $\xi^2_t$, 
satisfying 
\begin{equation}\label{equistart}
\bbP\left[\grad \xi^1_t\in \cdot \right]=\bbP\left[\grad \xi^2_t\in \cdot\right]=\mu.
\end{equation}
As $\mu$ is invariant for the dynamics $\grad \xi_t$, \eqref{equistart} is satisfied for all $t$ as soon 
as it is satisfied for $t=0$.
As we wish to use monotonicity as a tool, we want to have
\begin{equation}\label{katzodue}
\forall t\ge 0, \quad \xi^1_t\le \xi^0_t\le \xi^2_t,
\end{equation}
Then our strategy to couple $\xi^0_t$ with equilibrium is in fact to couple 
$\xi^1_t$ with $\xi^2_t$ and remark that if   \eqref{katzodue}  holds then 

\begin{equation}\label{paninosto}
\forall t\ge 0,\   \xi^1_t=\xi^2_t \ \Rightarrow \ \xi^1_t=\xi^0_t= \xi^2_t
\end{equation}

\medskip

We first have to construct the initial condition $\xi^1_0$ and $\xi^2_0$ which satisfies \eqref{equistart}
\begin{equation}\label{katzone}
\xi^1_0\le \xi^0_0\le \xi^2_0.
\end{equation}

\medskip

Let us start with variable $\eta_0$ which has law $\mu$.
We want to construct $\xi^1_0$ and $\xi^2_0$
which satisfies
\begin{equation}
\grad \xi^i_0=\eta_0.
\end{equation}
Somehow, we also want the vertical distance between $\xi^1_0$ and $\xi^2_0$ to be as small as possible.
We set for arbitrary $\eta \in \gO_{N,k}$, or $\xi\in \gO'_{N,k}$
\begin{equation}\begin{split}\label{defh}
H(\eta)&:=\max_{x,y\in \bbZ_N} \left |\sum_{z=x+1}^y \left(\eta(z)-\frac{k}{N}\right)\right |,\\
H(\xi)&:=\max_{x,y\in \bbZ_N} \left | \xi(x)-\xi(y) \right |.
\end{split}\end{equation}
Finally set we set 
\begin{equation}
\label{defh0}
H_0:=\big\lceil H(\eta_0)+s\sqrt{k} \big\rceil 
\end{equation}
and 

 \begin{equation}\label{fryieed}
 \begin{split}
 \xi_{0}^1(x)&:=\sum_{z=1}^x \eta_0(x)-\frac{kx}{N}-H_0. \\
  \xi_{0}^2(x)&:=\sum_{z=1}^x \eta_0(x)-\frac{kx}{N}+H_0. 
\end{split}
 \end{equation}
Note that with this choice, \eqref{katzone} is satisfied for $\chi \in \mathcal G_s$ (see Figure \ref{highfunk}).

 \begin{figure}[hlt]
 \epsfxsize =8.5 cm
 \begin{center}
 \psfragscanon
  \psfrag{H0}{$2H_0$}
 \psfrag{eta10}{$\xi^2_0$}
  \psfrag{eta20}{$\xi^1_0$}
    \psfrag{eta00}{$\xi^0_0$}
     \epsfbox{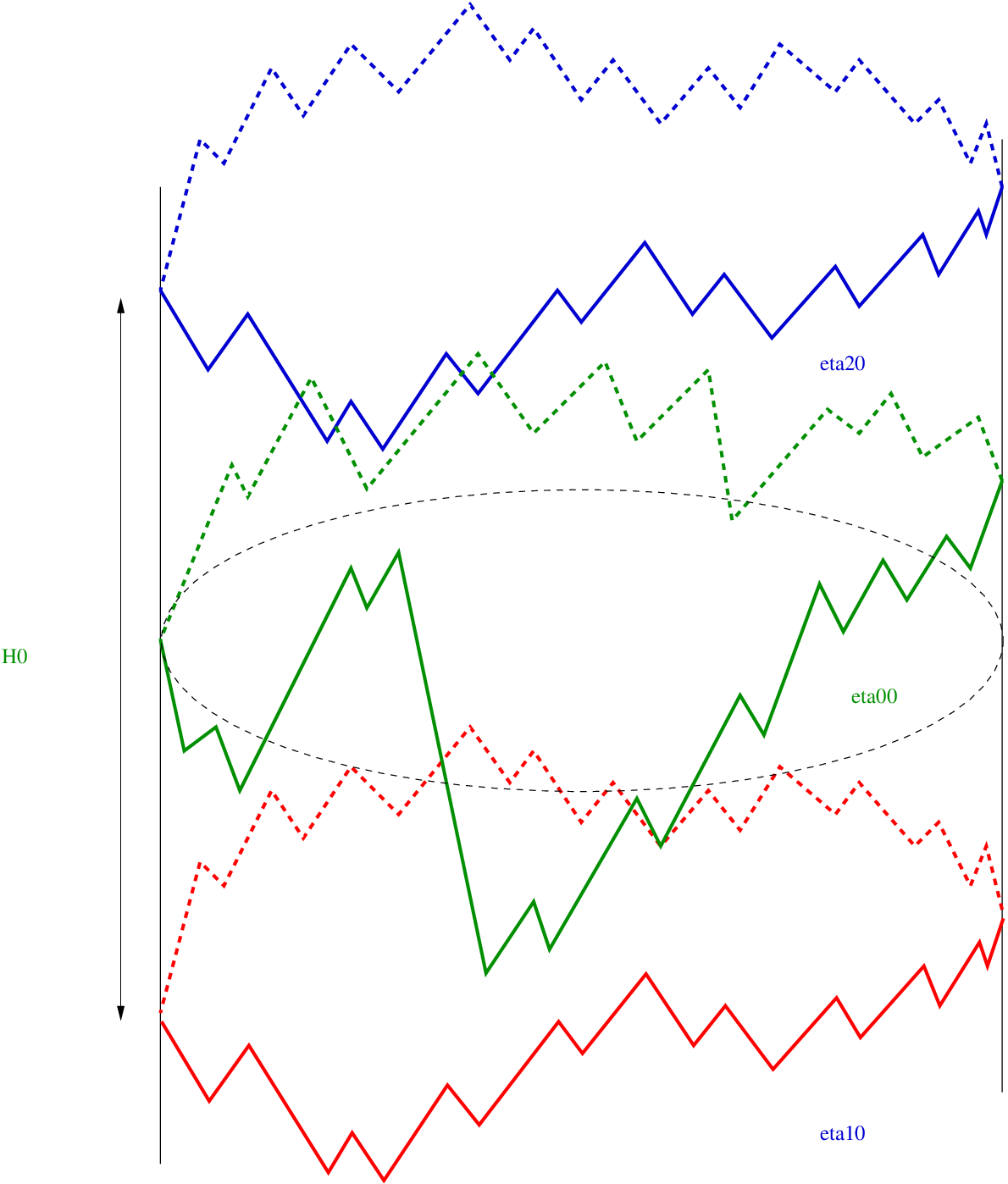}
      \end{center}
 \caption{\label{highfunk} Representation of the three initial condition for the corner-flip dynamics.
 $\xi^1_0$ and $\xi^2_0$ are translated version of the same profile.
 The height $H_0$ is designed so that initially $\xi^0_0$ (whose variation are smaller than $s\sqrt{k}$ if $\chi\in \mathcal G_s$ is framed by  $\xi^1_0$ and $\xi^2_0$).
 As the order is conserved by the graphical construction. $\xi^0_t$ couples with equilibrium when $\xi^1_t=\xi^2_t$.}
 \end{figure}

\subsection{The graphical construction}\label{grafff}

Now we present a coupling which satisfies \eqref{katzodue}.
Note that in this case $\xi^1_t=\xi^0_t=\xi^2_t$ if and only if the area between the two paths, defined by
\begin{equation}\label{paxvolumi}
 A(t):=\sum_{x\in \bbZ_N} \xi^2_t(x)-\xi^1_t(x),
\end{equation}
equals zero.

\medskip

The idea is then to find  among the order-preserving possible one
for which the ``volatility'' of $A(t)$ is the largest possible, so that it reaches zero faster.
We want to make the corner-flips of $\xi^1$ and $\xi^2$ \textsl{as independent as possible}
(of course making them completely independent is not an option since \eqref{katzodue} would not hold)

\medskip

We introduce now our coupling of the $\xi^i_t$, which is also a grand-coupling on $\gO'_{N,k}$,
in the sense that it allows us to construct $\xi_t$ starting from all initial condition on the same probability space.
The evolution of the $(\xi_t)_{t\ge 0}$ is completely determined by auxiliary Poisson processes which we call clock processes.
Set 
$$\Theta:=\left\{ (x,z) \ | \ x\in \bbZ_N \text{ and } z\in \bbZ+\frac{kx}{N} \ \right\}$$

And set $\cT^\uparrow$ and $\cT^\downarrow$ to be two independent rate-one clock processes 
indexed by $\Theta$ ($\cT^\uparrow_{\theta}$ and 
$\cT^\downarrow_{\theta}$ are two independent Poisson processes of intensity one of each $\theta\in \Theta$).
The trajectory of $\xi_t$ given $(\cT^\uparrow,\cT^\downarrow)$ is given by the following construction
\begin{itemize}
 \item $\xi_t$ is a c\`ad-l\`ag, and does not jump until one of the clocks indexed by $(x,\xi_t(x))$, $x\in \bbZ_N$.
 \item If $\cT^\downarrow_{(x,\xi_{t^-}(x))}$ rings at time $t$ and $x$ is a local maximum for $\xi_{t^-}$, then
 $\xi_{t}=\xi^x_{t^-}$.
 \item  If $\cT^\uparrow_{(x,\xi_{t^-}(x))}$ rings at time $t$ and $x$ is a local minimum for $\xi_{t^-}$, then
 $\xi_{t}=\xi^x_{t^-}$.
\end{itemize}
The coupling of $\xi^0_t$, $\xi^1_t$ and $\xi^2_t$ is obtained by using the same clock process for all of them.
The reader can check that with this coupling \eqref{katzodue} is a consequence of \eqref{katzone}.

\begin{rem}
In fact, the corner flip dynamics which is considered here, is in one to one correspondence with the zero-temperature stochastic Ising model on an infinite cylinder.
With this in mind the coupling we have constructed just corresponds to the the graphical construction of this spin flip dynamics.
See e.g.  \cite[Section 2.3 and Figure 3]{cf:LST} for more details for the dynamics on a rectangle with mixed boundary condition. 
\end{rem}

\begin{rem}
Let us stress here that the coupling we use here is not the one the one of \cite{cf:Wilson} or \cite{cf:Rost} for which the updates of pair of neighbors are done simultaneously for the coupled chains. In particular it is not a Markovian coupling for the particle system (as the height function is not encoded in the particle configuration). This is a crucial point here as this is what allows the coupling time to be much shorter. Recall in particular in \cite{cf:Wilson} (see Table 1), it is shown that with the usual Markovian coupling, the coupling time is twice as large as the mixing time.
\end{rem}

To prove Proposition \ref{csdf} it is sufficient to prove that $\xi^1_t$ and $\xi^2_t$
typically merge within a time $O(L^2)$. More precisely,

\begin{proposition}\label{nahnou}
For all $s>0$, given $\gep>0$ there exists $C(\gep,s)$ such that for all
sufficiently large $N$,
\begin{equation}\label{nahnou1}
\bbP[  \xi^1_{CN^2}\ne  \xi^2_{CN^2} ]\le \gep. 
\end{equation}
Similarly for all $s, u>0$, there exists $c(s,u)>0$ such that 
\begin{equation}\label{nahnou2}
\bbP[  \xi^1_{uN^2}\ne  \xi^2_{uN^2} ]\le 1-c(s,u).
\end{equation}
\end{proposition}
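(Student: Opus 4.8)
The plan is to analyze the area process $A(t)=\sum_{x\in\bbZ_N}(\xi^2_t(x)-\xi^1_t(x))\ge 0$ defined in \eqref{paxvolumi}, and to show that it is a nonnegative supermartingale (in fact a martingale away from the absorbing state $A=0$) whose quadratic variation accumulates at a linear rate of order $N$ per unit time, as long as $A(t)>0$. Since the coupling preserves the order \eqref{katzodue}, the event $\{\xi^1_t=\xi^2_t\}$ coincides with $\{A(t)=0\}$, and $0$ is absorbing. Thus the whole proposition reduces to an estimate on the hitting time of $0$ by $A(t)$, with the initial value controlled by $A(0)\le N\cdot 2H_0$ where, by \eqref{defh0}, $H_0=\lceil H(\eta_0)+s\sqrt k\rceil$ and $H(\eta_0)$ is typically $O(\sqrt k)$ by the equilibrium fluctuation bound \eqref{fluquetec}; so $A(0)=O(N\sqrt k)$ with high probability.

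First I would compute, from the generator of the coupled corner-flip dynamics, the drift and the quadratic variation of $A(t)$. For each site $x$, the clock processes $\cT^\uparrow_{(x,\cdot)},\cT^\downarrow_{(x,\cdot)}$ cause $\xi^i_t(x)$ to change by $\pm 2$ when $x$ is a local extremum of $\xi^i$; the contribution to $A$ from such a flip at $x$ is $\pm 2$ unless both $\xi^1$ and $\xi^2$ flip simultaneously at $x$ (which happens exactly when they agree in a neighborhood of $x$ and $x$ is a common corner), in which case the contributions cancel. A direct bookkeeping gives $\E[dA(t)\mid\cF_t]=0$: down-flips and up-flips balance because the number of local maxima equals the number of local minima for each path on the circle, and the coupling is set up so $\xi^1$ flips down/up using the same clocks. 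For the quadratic variation, each site $x$ at which $\xi^1_{t}(x)<\xi^2_t(x)$ (equivalently, where the two paths have not yet merged) and which is a corner of at least one of the two interfaces contributes rate $O(1)$ of jumps of size $O(1)$ to $A$; the key geometric observation is that whenever $A(t)>0$ there is a macroscopic number — in fact $\ge c N$, or at least $\ge$ a number comparable to the number of discrepancy sites — of such ``active'' corners, because between any two consecutive points where $\xi^1=\xi^2$ there must be a corner of each path. This yields $\frac{d}{dt}\E[\langle A\rangle_t]\ge c N$ on $\{A(t)>0\}$, uniformly.

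Given the martingale property and the lower bound on the rate of quadratic variation accumulation, I would conclude by a standard argument: let $\tau:=\inf\{t:A(t)=0\}$. Using that $A(t\wedge\tau)$ is a nonnegative martingale with $A(0)^2-\langle A\rangle_{t\wedge\tau}$ a martingale, one gets $\E[\langle A\rangle_{\tau}]\le A(0)^2$, hence $\E[\tau]\le A(0)^2/(cN)=O(A(0)^2/N)$. With $A(0)=O(N\sqrt k)$ this is $O(N k)=O(N^2)$ since $k\le N/2$. A Markov-type inequality then gives \eqref{nahnou2}: for the process started from $\cG_s$ initial data one has $A(0)\le 2N H_0 \le C(s) N\sqrt k$ deterministically on $\cG_s$, so $\bbP[\tau> uN^2]\le \E[\tau]/(uN^2)\le C'(s)/u<1$ for $u$ of order $1$; more carefully one should iterate the argument over time blocks of length $\sim N^2$ to get a genuine bound bounded away from $1$ for every fixed $u>0$ (via $\bbP[\tau> uN^2]\le 1-c(s,u)$), and over many such blocks to get \eqref{nahnou1}, i.e.\ $\bbP[\tau>CN^2]\le\gep$ for $C=C(\gep,s)$ large. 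The transfer of \eqref{nahnou1}--\eqref{nahnou2} to Proposition \ref{csdf} is immediate from \eqref{paninosto} and \eqref{laloi}, since on $\{\xi^1_t=\xi^2_t\}$ the law of $\grad\xi^0_t$ is both $P^\chi_t$ and $\mu$.

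The main obstacle I anticipate is the geometric lower bound on the number of active corners when $A(t)>0$: one must rule out degenerate configurations where $\xi^2-\xi^1$ is large but ``frozen'' — e.g.\ where the two interfaces are parallel staircases with corners at the same sites but at different heights, so that the clocks indexed by $(x,\xi^1(x))$ and $(x,\xi^2(x))$ are \emph{distinct} and flips do not cancel, yet one needs to check these flips actually move $A$ rather than leaving it invariant. Here the structure of $\Theta$ and the height-indexed clocks matters: a flip at $x$ changes $\xi^i(x)$ by $2$, so it changes $A$ by $2$ unless the \emph{other} path also has a corner at $x$ at that very instant AND the clock is shared, which requires $\xi^1(x)=\xi^2(x)$, contradicting $x$ being a discrepancy site. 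Hence every corner at a discrepancy site is genuinely active, and one only needs the combinatorial fact that the number of discrepancy sites $\#\{x:\xi^1_t(x)<\xi^2_t(x)\}$, when positive, forces at least a comparable number of corners — which follows since on each maximal discrepancy interval each of $\xi^1,\xi^2$ attains an interior extremum. Making this quantitative, and checking it survives the boundary effects of the maximal discrepancy intervals (which could in principle be few and long), is the technical heart; a robust route is to bound $\E[(A(t+h)-A(t))^2\mid\cF_t]\ge c h\,\#\{\text{discrepancy sites}\}$ and combine with $A(t)\le N\cdot\max_x(\xi^2_t-\xi^1_t)(x)$ together with the fact that the maximum discrepancy is itself a supermartingale that must pass through all intermediate values, to run the hitting-time estimate on a slightly different functional if $A$ itself proves awkward.
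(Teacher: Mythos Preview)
Your overall framework is right: $A(t)$ is indeed a nonnegative martingale under this coupling, and the proposition reduces to a hitting-time estimate for $A$. The gap is in the crucial step where you assert a uniform lower bound on the jump rate.

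You claim that on $\{A(t)>0\}$ the number of active corners is $\ge cN$, which would give $d\langle A\rangle_t\ge cN\,dt$ and then $\E[\tau]\le A(0)^2/(cN)=O(N^2)$. This is false. If $\xi^1_t$ and $\xi^2_t$ coincide except on a single short interval (one small bubble), the number of active corners is $O(1)$ while $A(t)>0$. Your fallback justification, ``between any two consecutive contact points each of $\xi^1,\xi^2$ attains an interior extremum'', gives only \emph{two} corners per bubble, not a number comparable to the discrepancy set, and the number of bubbles can be $1$. Your other fallback, $u(t)\ge c\cdot\#\{x:\xi^1_t(x)<\xi^2_t(x)\}$, is also not a deterministic fact: a bubble of length $L$ could a priori sit over a stretch where $\xi^1_t$ has very few corners; to get a corner density one must invoke that $\nabla\xi^1_t$ is at equilibrium and use a large-deviation event (what the paper calls $\cA$).

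What actually holds is a bound of the form
\[
u(t)\;\ge\; c\,\min\!\left(N,\ \frac{A(t)}{\max(H(t),\sqrt{N})}\right),
\qquad H(t):=H(\xi^1_t)+H(\xi^2_t),
\]
valid on a high-probability event controlling the corner density of $\xi^1_t$. Since $H(t)$ is typically $O(\sqrt{k})$ and $A(t)$ ranges from $N\sqrt{k}$ down to $0$, the rate $u(t)$ degrades as $A(t)$ shrinks, and your one-shot inequality $\E[\tau]\le A(0)^2/(cN)$ is not available. The paper handles this by a multi-scale decomposition: define $\tau_i=\inf\{t:A(t)\le N\sqrt{k}\,2^{-i}\}$, note that the time-changed walk needs quadratic variation $\cT_i\approx N^2k\,4^{-i}$ to cross stage $i$, while on stage $i$ one has $u(t)\gtrsim (N\sqrt{k}\,2^{-i})/\sqrt{k}=N2^{-i}$ (after controlling $H(t)$), so the real time in stage $i$ is $\lesssim N^2 k\,4^{-i}/(N2^{-i})=N\sqrt{k}\,2^{-i}\le N^2 2^{-i}$, and the sum over $i$ is $O(N^2)$. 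Controlling $H(t)$ uniformly over the whole time window, and the corner-density event, are separate probabilistic inputs that you have not addressed.

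In short: the martingale/area idea is the right one, but the ``$u(t)\ge cN$'' shortcut does not hold, and the genuine argument requires (i) the bound $u(t)\gtrsim A(t)/H(t)$ via a corner-density event for the stationary copy, (ii) control of the fluctuation height $H(t)$, and (iii) a dyadic decomposition in $A(t)$ to convert this into an $O(N^2)$ bound on $\tau$.
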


\begin{proof}[Proof of Proposition  \ref{csdf}]

Given $s>0$  consider $(\xi^1_t)_{t\le 0}$, $(\xi^2_t)_{t\ge 0}$, constructed as above.
Then given $\chi \in \mathcal G_s$, we construct $\xi^0_t$ the dynamics starting from the initial condition \eqref{fryied} and using the same clock process as $\xi^1_t$ and $\xi^2_t$.
By definition of  $\mathcal G_s$, \eqref{katzone} is satisfied  and thus so is \eqref{katzodue} from the graphical construction.
Recalling \eqref{laloi} and \eqref{equistart} we have
\begin{equation}
\|P^\chi_{t}-\mu\|\le \bbP\left[ \grad \xi^0_{t}\ne \grad \xi^1_{t}\right]\le \bbP\left[ \xi^0_{t}\ne  \xi^1_{t}\right]
\le  \bbP\left[\xi^1_{t}\ne \xi^2_{t}\right].
\end{equation}
for any $t>0$, where the last inequality is a consequence of \eqref{paninosto}.
According to the above inequality, Proposition \ref{csdf} obviously is a consequence of Proposition \ref{nahnou}.

\end{proof}

\section{The proof of Proposition \ref{nahnou}}\label{multis}

In order to facilitate the exposition of the proof, we choose to present it in the case $k=N/2$ first. We also chose to focus on \eqref{nahnou1}.
The necessary modifications to prove \eqref{nahnou2} and to adapt the proofs for general $k$ are explained at the end of the section.

\subsection{The randomly walking area}

We are interested in bounding (recall \eqref{paxvolumi})
\begin{equation}\label{deftau}
\tau :=\inf \{ t \ge 0 \ | \ A(t)=0 \}= \inf \{ t \ge 0 \ | \ \xi^1_t=\xi^2_t \}
\end{equation}
With our construction, $A(t)$, the area between the two curves
is a $\bbZ_+$ valued martingale which only makes nearest neighbor jumps (corners flip one at a time).

\medskip

Hence $A(t)$ is just a time changed symmetric nearest neighbor walk on $\bbZ_+$ which is absorbed at zero.
In order to get a bound for the time at which $A$ hits zero, we need to have reasonable control on the jump rate
which depends on 
the particular configuration $(\xi^1_t,\xi^2_t)$ the system sits on. The jump rate is given by 
the number corners of $\xi^1_t$ and $\xi^2_t$ that can flip separately.
More precisely,
set 

\begin{multline}\label{defui}
U_i(t):=\{x\in \bbZ_N \ | \ \xi^i_t  \text{ has  a local extremum at  } x 
\text{ and } \\
\exists y\in \{x-1,x,x+1\}, \xi^2_t(y)> \xi^1_t(y)\}.
\end{multline}
The jump rate of $A(t)$ is given by 
\begin{equation}
u(t):=\#U_1(t)+\#U_2(t).
\end{equation}
For $t\le \int_0^\tau u(t) \dd t$ let us define
\begin{equation} \label{defjt}
J(t):=\inf\left\{s \ | \  \int_0^s u(v)\dd v\ge t\right\}.
\end{equation}
By construction, the process $(X_t)_{t\ge 0}$ defined by 
\begin{equation}\label{defx}
X_t:=A(J(t))
\end{equation} 
is a continuous time random walk on $\bbZ_+$ which jumps up and down with rate $1/2$.
We have from the definition of the $\xi^i_0$
$$X_0=A(0)=2H_0N$$
which is of order $N^{3/2}$ and hence
$X_t$ needs a time of order $N^{3}$ to reach $0$. 
What we used to estimate $A(0)$ is the following bound which can be derived from \eqref{fluquetec} and the definition of $H_0$ \eqref{defh0},

\begin{equation}\label{grimic}
\bbP\left[A(0) \ge 2(s+r)N^{3/2} \right]=\bbP\left[H_0\ge (s+r)N^{1/2} \right] \le  \bbP\left[H(\eta_0)\ge rN^{1/2} \right]  \le 2 e^{-cr^2}.
\end{equation}

\medskip

If $u(t)$ were of order $N$ for all $t$ this would be sufficient to conclude that $A(t)$ reaches zero within time $O(N^2)$.
This is however not the case: the closer $\xi^1_t$ and $\xi^2_t$ get, the smaller $u(t)$ becomes.
A way out of this is to introduce a multi-scale analysis where the bound we require on $u$ depends on how small
$A(t)$ already is.

\subsection{Multi-scale analysis}

We construct a sequence of intermediate stopping time $(\tau_i)_{i\ge 0}$ as follows.
\begin{equation}
\tau_i:=\inf\left\{t\ge 0 \ | \ A(t)\le N^{3/2}2^{-i} \right\}.
\end{equation}
We are interested in $\tau_i$ for $i\in\{0,\dots,K\}$ with
\begin{equation}\label{defK}
K_N:=\left\lceil \frac{1}{2}\log_2 N \right\rceil.
\end{equation}
Note that a number of $\tau_i$ can be equal to zero if $A(0)\le N^{3/2}$.
We set $\tau_{-1}:=0$ for convenience.

 \medskip
 
 To bound the value of $\tau$, our aim is to control each increments
 $\gD \tau_i=\tau_{i}-\tau_{i-1}$ for $i\le K$ as well $\tau-\tau_K$.
A first step is to get estimates for the equivalent of the $\Delta \tau$ for the time rescaled process 
 $X_t$ (recall \ref{defx}).
We set for $i\in\{0,\dots,K\}$

\begin{equation}\label{freddo}\begin{split}
\cT_i&:=\int_{\tau_{i-1}}^{\tau_i} u(t) \dd t,\\
\cT_\infty&:=\int_{\tau_K}^{\tau} u(t) \dd t.
\end{split}\end{equation}
As $X$ is diffusive, $\cT_i$ is typically of order $(N^{3/2} 2^{-i})^2=N^34^{-i}$, and
$\cT_{\infty}$ is of order $N^2$. With this in mind, it is not too hard to believe that

\begin{lemma}\label{cromican}
Given $\gep, s>0$
there exists a constant $C(\gep,s)$ such that 
\begin{equation}
\bbP[ \left \{\exists i\in \{0,\dots, K\}, \cT_i\ge C N^3 3^{-i}\right\} \cup \{\cT_\infty\ge C N^2 \}]\le \gep.
\end{equation}
\end{lemma}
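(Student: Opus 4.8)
\textbf{Proof strategy for Lemma \ref{cromican}.}
The plan is to treat each increment separately via the union bound: it suffices to show that, for a suitable constant $C$, we have $\bbP[\cT_i \ge C N^3 3^{-i}] \le \gep 2^{-i-2}$ for every $i\in\{0,\dots,K\}$ and $\bbP[\cT_\infty \ge CN^2] \le \gep/2$, since then the total probability is bounded by $\gep/2 + \sum_{i\ge 0}\gep 2^{-i-2}\le \gep$. (Note the geometric decay $3^{-i}$ in the statement versus the ``natural'' diffusive scale $4^{-i}$ leaves a factor $(4/3)^i$ of room, which is exactly what will absorb the $2^{-i}$ needed for summability after we pay for a union bound — so the mismatch between $3$ and $4$ in the exponents is deliberate and should not be ``fixed''.) The key point is to reduce everything to the elementary diffusive fact about the time-changed walk $X_t$ of \eqref{defx}: a rate-$1$ symmetric nearest-neighbor walk on $\bbZ_+$ absorbed at $0$, started from height $h$, hits $0$ within time $C h^2$ with probability at least $1-\gep$ for $C=C(\gep)$ large (this is standard — e.g.\ via the optional stopping theorem applied to $X_t^2 - t$, or Kolmogorov's maximal inequality).

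The main technical translation is between the $X$-clock and the real clock, i.e.\ controlling $\cT_i=\int_{\tau_{i-1}}^{\tau_i} u(t)\dd t = |\{t\in[0,\tau): X\text{ spends at heights in }(N^{3/2}2^{-i}, N^{3/2}2^{-(i-1)}]\}|$ (in the $X$-time parametrization). So $\cT_i$ is exactly the time the walk $X$, started from $X_{\tau_{i-1}}\le N^{3/2}2^{-(i-1)}$, needs to descend from its starting height to level $N^{3/2}2^{-i}$. Conditionally on $\cF_{\tau_{i-1}}$, by the strong Markov property of $X$ this is dominated by the hitting time of $0$ for a walk started from height $N^{3/2}2^{-(i-1)}$, which by the elementary estimate above is at most $C'(N^{3/2}2^{-(i-1)})^2 = 4C'N^3 4^{-i}$ except on an event of probability $\gep 2^{-i-2}$ — here I take the failure probability $\gep_i = \gep 2^{-i-2}$ in the hitting-time estimate, which costs only a constant (logarithmic in $\gep_i$) blow-up in $C'$, harmless since $\sum_i \log(1/\gep_i)4^{-i} < \infty$. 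Then $4C'N^34^{-i}\le CN^33^{-i}$ for $C$ large, giving the bound for $\cT_i$. For $\cT_\infty$: at time $\tau_K$ we have $A(\tau_K)\le N^{3/2}2^{-K_N}\le C''N$ by the choice \eqref{defK} of $K_N$, so $X_{\tau_K}=A(\tau_K)\le C''N$, and the same hitting-time estimate gives $\cT_\infty \le C(C''N)^2 = CN^2$ off an event of probability $\gep/2$.

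The one genuine subtlety — and the step I expect to be the real obstacle — is that $\cT_i$ is defined as $\int_{\tau_{i-1}}^{\tau_i} u(t)\dd t$ in \emph{real} time, and the time change $J$ of \eqref{defjt} is well-defined and the identity $X_t = A(J(t))$ holds only on the event $\{t\le \int_0^\tau u(v)\dd v\}$, i.e.\ we need to know a priori that $u(t)>0$ throughout $[0,\tau)$ so that the change of variables is legitimate and $\tau = \int_0^{\int_0^\tau u}$ makes sense. This is where one must argue that as long as $\xi^1_t\ne\xi^2_t$ there is always at least one flippable corner contributing to $U_1(t)\cup U_2(t)$ — concretely, if $A(t)>0$ then $\xi^2_t(x)>\xi^1_t(x)$ for some $x$, and by walking along the circle to the boundary of $\{\xi^2_t>\xi^1_t\}$ one locates an $x$ at which $\xi^1_t$ (or $\xi^2_t$) has a local extremum that can be flipped without violating the ordering; hence $u(t)\ge 1$ and in particular $A(t)$ is a genuine time-change of $X$ that cannot get stuck at a positive value. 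Once this is in place, the identity $\cT_i = $ ($X$-time to go from $X_{\tau_{i-1}}$ down to $N^{3/2}2^{-i}$) is exact, and the diffusive estimate finishes the proof; everything else (the union bound, the $3$-vs-$4$ bookkeeping) is routine.
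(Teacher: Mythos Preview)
Your approach is essentially the paper's: reduce each $\cT_i$ to a diffusive hitting-time estimate for the time-changed walk $X$, and exploit the slack between the target scale $3^{-i}$ and the natural diffusive scale $4^{-i}$ to make the union bound summable. Your observation that $u(t)\ge 1$ whenever $A(t)>0$ (any bubble contains at least one flippable corner of $\xi^1_t$) is correct and fills in a point the paper leaves implicit; the paper simply asserts that $\cT_i$ has the law of $T_a$ with $a=N^{3/2}2^{-i}$.

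There is, however, one genuine gap. Your bound $X_{\tau_{i-1}}\le N^{3/2}2^{-(i-1)}$ is \emph{false} for $i=0$: here $\tau_{-1}=0$ and $X_0=A(0)=2H_0 N$ with $H_0=\lceil H(\eta_0)+s\sqrt{k}\rceil$, so (for $k=N/2$) $A(0)\ge s\sqrt{2}\,N^{3/2}$ deterministically, which already exceeds $2N^{3/2}$ once $s>\sqrt 2$. This is exactly where the dependence of $C(\gep,s)$ on $s$ enters, and your argument never invokes $s$ at all --- a red flag given the statement. The fix (carried out in the paper) is to treat $\cT_0$ separately: use \eqref{grimic} to find $C_2(\gep,s)$ with $\bbP[A(0)\ge C_2 N^{3/2}]\le \gep/4$, and on the complementary event dominate $\cT_0$ by $T_{C_2 N^{3/2}}$, so that the hitting-time tail $\bbP[T_a\ge ua^2]\le C_1 u^{-1/2}$ gives $\bbP[\cT_0\ge u_1 C_2^2 N^3]\le \gep/4$ for $u_1$ large. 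With this correction (and taking $C=\max(u_0,u_1 C_2^2)$) your proof goes through.
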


\begin{proof}

Let $Z_t$ denote a nearest neighbor walk on $\bbZ$ starting from $0$ and $T_a$ the first time $Z$ reaches $a$. It is rather standard that there exists a constant $C_1$ 
such that for every $a\ge 1$ and every $u\ge 0$
\begin{equation}\label{crooop}
\bbP\left[ T_a \ge u a^2\right]\le C_1 u^{-1/2}.
\end{equation}
Note that for $i\ge 1$, ignoring the effect of integer rounding,  $\tau_i$ has the same law as  $T_a$ with $a=N^{3/2}2^{-{i}}$ and thus applying \eqref{crooop} we obtain that
\begin{equation}
\bbP[\cT_i\ge u  N^3 3^{-i}]\le C_1  u^{-1/2} (3/4)^{i/2}.
\end{equation}
In the same manner we have
\begin{equation}
\bbP[\cT_{\infty}\ge u  N^2]\le C_1  u^{-1/2}.
\end{equation}
We can then choose $u_0(\gep)$ large enough in a way that 
\begin{equation}
C_1  u^{-1/2}\left( \sum_{i=1}^K  (3/4)^{i/2}+ 1\right) \le \gep/2.
\end{equation}
Concerning $\cT_0$, from \eqref{grimic}, one can find  $C_2(\gep,s)$ such that 
\begin{equation}
\bbP\left[A(0)\ge C_2 N^{3/2}\right]\le \gep/4.
\end{equation}
Conditionally on the event $A(0)\le C_2 N^{3/2}$, $\cT_0$ is stochastically dominated by $T_a$ with $a=C_2N^{3/2}$ and hence
using \eqref{crooop} and fixing $u_1(\gep,s)$ large enough (depending on $C_1$, $C_2$ and $\gep$) we obtain 

\begin{equation}
\bbP\left[ \cT_0 \ge u_1 (C_2)^2 N^{3}\right]\le \bbP\left[A(0)\ge C_2 N^{3/2}\right]+\bbP\left[ \cT_0 \ge u A(0)^2\right]\ge \gep/2.
\end{equation}
Then we conclude by taking $C(\gep,s):=\max(u_0,u_1 (C_2)^2)$.

\end{proof}

What we have to check then is that the value of $u(t)$ is not too small in the time interval 
$[\tau_{i-1},\tau_i)$ for all $i\in \{0,\dots, K\}$. What we want to use is that for any $t\ge 0$, $\grad \xi^1_t$ is at equilibrium
so that $\xi^1_t$ has to present a ``density of flippable corners''.
We introduce an event $\mathcal A$ which is aimed to materialize this fact.
Given $x$ and $y$ in $\bbZ_N$ we set 

\begin{equation}\label{defj}
j(x,y,\xi):=\#\{ z\in [x,y] \ | \ \xi(z) \text{ is a local extremum } \}.
\end{equation}
We have
\begin{equation}
\mu\left(j(x,y,\xi)\right)=\frac{(N-2)\#[x,y]}{2(N-1)}.
\end{equation}
We define
\begin{equation}\label{defca}
\mathcal A:= \Big\{ \forall t\le N^3,  \forall (x,y) \in \bbZ_N^2,\ \#[x,y]\ge N^{1/4}\Rightarrow j(x,y,\xi^1_t)\le \frac{1}{3}\#[x,y] \Big\},
\end{equation}
the event that a ``large" interval with an anomalously low density of corner does not appear before time $N^3$.

\begin{lemma}\label{smalla}
For all $N$ sufficiently large
\begin{equation}
 \bbP[\mathcal A^c] \le \frac{1}{N}.
\end{equation}
\end{lemma}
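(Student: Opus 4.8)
The statement to establish is $\bbP[\mathcal A^c]\le 1/N$ for $N$ large, where $\mathcal A$ is the event that no long arc ever carries an anomalously small number of extrema of $\xi^1_t$ before time $N^3$; concretely one wants $j(x,y,\xi^1_t)\ge\tfrac13\#[x,y]$ for every $t\le N^3$ and every $[x,y]$ with $\#[x,y]\ge N^{1/4}$ (this is the direction consistent with the description preceding \eqref{defca}). The plan is to combine three ingredients: (a) for each \emph{fixed} $t$ the count $j(x,y,\xi^1_t)$ is governed by the equilibrium measure; (b) a large-deviation bound for that count under $\mu$; and (c) a reduction of the continuum $\{t\le N^3\}$ to a polynomially long deterministic grid, using a crude control on the number of corner flips.

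For (a) and (b): the extremum structure of a height function depends only on its increments, since $z$ is a local extremum of $\xi$ exactly when $\grad\xi(z)\ne\grad\xi(z+1)$; thus $j(x,y,\xi)=\#\{z\in[x,y]:\grad\xi(z)\ne\grad\xi(z+1)\}$ is a function of $\grad\xi$ alone, and as $\grad\xi^1_t$ has law $\mu$ for every $t$ it suffices to bound, for a fixed arc of length $\ell=\#[x,y]\in[N^{1/4},N]$,
\begin{equation*}
\mu\Big(\ \#\{z\in[x,y]:\eta(z)\ne\eta(z+1)\}\le\tfrac{5}{12}\ell\ \Big)
\end{equation*}
(the slightly smaller threshold $\tfrac5{12}$, still below the equilibrium density $\approx\tfrac12$, leaves room for flips accumulated between grid points). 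I would estimate this by comparing $\mu$ with the product Bernoulli$(1/2)$ law $\nu$ on $\{0,1\}^{\bbZ_N}$: since $\mu$ is $\nu$ conditioned on $\{\sum_{z\in\bbZ_N}\eta(z)=N/2\}$ and the $\nu$-probability of this event equals $\binom{N}{N/2}2^{-N}\ge c/\sqrt N$, one has $\mu(E)\le c^{-1}\sqrt N\,\nu(E)$ for every $E$. Under $\nu$, for a proper subarc the indicators $\ind_{\{\eta(z)\ne\eta(z+1)\}}$, $z\in[x,y]$, are i.i.d.\ Bernoulli$(1/2)$ (the map sending $(\eta(x),\dots,\eta(y+1))$ to $\eta(x)$ together with these $\ell$ indicators is a bijection of $\{0,1\}^{\ell+1}$), so Hoeffding's inequality bounds the $\nu$-probability by $e^{-\ell/72}$; the degenerate case $\ell=N$ is treated likewise after cutting the circle into two arcs. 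In all cases the displayed $\mu$-probability is at most $c^{-1}\sqrt N\,e^{-N^{1/4}/72}$.

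For (c): $\xi^1$ jumps only when a clock at a flippable corner rings, so its total jump rate is at most $N$ and the number of its jumps in a time window of length $\delta$ is stochastically dominated by a $\mathrm{Poisson}(N\delta)$ variable; moreover a single corner flip changes each $j(x,y,\cdot)$ by at most $2$ (the flipped site remains an extremum, and only the extremum status of its two neighbours can change). Take the grid $t_i=i/N$, $0\le i\le N^4$, covering $[0,N^3]$, and set $J=\lfloor N^{1/4}/24\rfloor$. If $j(x,y,\xi^1_t)<\tfrac13\ell$ for some $t\le N^3$ and some arc with $\ell=\#[x,y]\ge N^{1/4}$, then, writing $t_i$ for the grid point just below $t$, either $\xi^1$ makes more than $J$ jumps on $[t_i,t_{i+1}]$, or $j(x,y,\xi^1_{t_i})<\tfrac13\ell+2J\le\tfrac5{12}\ell$. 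A union bound over the $O(N^4)$ grid points and the $O(N^2)$ arcs then gives
\begin{equation*}
\bbP[\mathcal A^c]\le O(N^4)\,\bbP\big(\mathrm{Poisson}(1)>J\big)+O(N^6)\,c^{-1}\sqrt N\,e^{-N^{1/4}/72},
\end{equation*}
and both terms are $o(1/N)$ since $J$ grows like $N^{1/4}$ (the Poisson term decaying super-polynomially).

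The main obstacle is precisely this last reduction: the deterministic slack ($\tfrac5{12}$ against $\tfrac13$) must be large enough to absorb the flips occurring inside one grid cell, yet small enough that the fixed-time estimate of (b) still applies, and this balance succeeds only because the scale $N^{1/4}$ built into $\mathcal A$ makes the slack $\tfrac1{12}\ell\ge\tfrac1{12}N^{1/4}$ dominate the typical $O(1)$ number of flips in a cell of length $1/N$ while keeping $\tfrac5{12}<\tfrac12$. A secondary point is that the Bernoulli comparison for the density of sign changes is tailored to $k=N/2$; for general $k$ the constant $\tfrac13$ should be replaced by a value below the equilibrium corner density $\tfrac{2k(N-k)}{N(N-1)}$, which is part of the modifications for general $k$ postponed to the end of the section.
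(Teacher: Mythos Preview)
Your argument is correct and follows the same overall strategy as the paper: reduce to a fixed-time equilibrium estimate via the Bernoulli comparison $\mu(\cdot)\le C\sqrt N\,\tilde\mu(\cdot)$, apply a large-deviation bound for the number of sign changes on an arc under the product measure, and then union-bound over time and over arcs.

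The one place where you diverge from the paper is the time discretisation. The paper simply takes $(T_i)_{i\ge 0}$ to be the transition times of the chain: since $\xi^1$ is piecewise constant, if $\grad\xi^1_t\notin\cE$ for some $t\le N^3$ then already $\grad\xi^1_{T_i}\notin\cE$ for some $i$, and one only has to bound the number of transitions up to time $N^3$ (polynomial in $N$, since the total rate is at most $N$). This removes any need for the auxiliary threshold $5/12$, the Lipschitz estimate on $j(x,y,\cdot)$ under a single flip, and the Poisson tail control within a grid cell. Your deterministic-grid approach is slightly heavier but perfectly valid, and has the mild advantage of being agnostic about whether the embedded jump chain preserves $\mu$; the paper's approach is shorter because it exploits the piecewise-constant trajectory directly. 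Your closing remark about general $k$ is also on point and matches what the paper does in the later Lemma with the modified event $\mathfrak A$.
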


\begin{proof}[Proof of Lemma \ref{smalla}] 
 
Note that for any given time $t$, $\grad \xi^1_t$ is distributed according to 
$\mu$ (because this is the case for $t=0$ and $\mu$ is the equilibrium measure for the $\grad \xi$ dynamics).
Now let us estimate the probability of 
$$\cE:=\left\{\eta \ | \ \forall x,y \in \bbZ_N,\ \#[x,y]\ge N^{1/4}\Rightarrow j(x,y,\eta)\le \frac{1}{3}\#[x,y]\right\},$$
under the measure $\mu$ where $j(x,y,\eta)$ is defined like its counter part for the height function \eqref{defj}
replacing ``$x$ is a local extremum" by "$\eta(x)\ne \eta(x+1)$".

We consider $\tilde \mu$ an alternative measure on $\{0,1\}^{\bbZ_N}$, under which the $\eta(x)$ are i.i.d.\ Bernoulli random variable with parameter $1/2$.
By the local central limit Theorem for the random walk we have

\begin{equation}\label{groom}
\mu(\cE):= \tilde\mu\left(\cE  \ | \ \sum_{x\in \bbZ_N} \eta(x)=N/2\right)\le C_1\sqrt{N} \tilde \mu (\cE).
\end{equation}
Let us now estimate  $\tilde \mu (\cE).$
First we remark that we can replace ``$ \#[x,y]\ge N^{1/4}$" in the definition of $\cE$ by ``$\#[x,y] \in [N^{1/4}, 2N^{1/4}]$".
Indeed,  by dichotomy, if the proportion of local maxima is smaller than $1/3$ on a long interval, it has to be smaller than $1/3$ on a subinterval whose length belong to $[N^{1/4}, 2N^{1/4}]$.

\medskip

Set $x\in \{ \lceil N^{1/4} \rceil, N-1\}$, note that  $\ind_{\{\eta(z)\ne \eta(z+1)\}}$, $z\in \{1,\dots, x\}$ are IID Bernouilli variables of parameter $1/2$, and hence 
by standard large deviation results there exists a constant $C_2>0$ such that
\begin{equation}\label{acroot}
\tilde \mu\left( \sum_{z=1}^x \ind_{\{\eta(z)\ne \eta(z+1)\}} \le N/3 \right)\le e^{-C_2x}
\end{equation}
By translation invariance we can deduce similar bounds for any translation of the interval $[1,x]$. 
Then, summing over all intervals and using \eqref{groom}, we deduce that there exists $C_3$ such that, for all $N$ sufficiently large
 $\mu(\cE)\le e^{-cN^{1/4}}.$

Now, we  set $(T_i)_{i\ge 0}$ to be the times where the chain $\xi^2$ makes a transition. 
 The chain $(\xi^1_{T_i})_{i\ge 0}$ is a discrete time Markov chain with equilibrium probability $\mu$ and hence by union bound

 \begin{equation}
 \bbP\left[\exists t\le T_i \ \grad \xi^1_t(t)\notin \cE \right] \le i e^{-C_3N^{1/4}}.
 \end{equation}
This implies
  \begin{equation}
\bbP[\cA] = \bbP\left[\exists t\le N^3 \grad \xi^1_t(t) \notin \cE \right] \le i e^{-C_3N^{1/4}}+\bbP\left[ T_i\le N^3\right].
 \end{equation}
 
As the transitions occur with a rate which is at most $N$,  the second term is exponentially small e.g. for $i=N^5$ and this concludes the proof.
 \end{proof}
 
Then when $\mathcal A$ holds, we can derive an efficient lower bound on $u(t)$ which just depend on $A(t)$.
Recall \eqref{defh}

\begin{lemma}\label{fromzea}
When $\mathcal A$ holds we have for all $t\le N^3$
\begin{equation}
u(t)\ge \frac 1 3 \min\left( N, \frac{A(t)}{\max(H(\xi^1_t)+H(\xi^2_t), N^{1/2})}\right)
\end{equation}
\end{lemma}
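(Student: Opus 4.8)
The plan is to bound $u(t) = \#U_1(t)+\#U_2(t)$ from below by constructing, out of the $\xi^1_t$-corners guaranteed by the event $\mathcal{A}$, a large supply of corners that also satisfy the separation condition ``$\exists y\in\{x-1,x,x+1\}$ with $\xi^2_t(y)>\xi^1_t(y)$''. First I would fix $t\le N^3$ and write $h(x):=\xi^2_t(x)-\xi^1_t(x)\ge 0$; this is a nonnegative integer-valued function on $\bbZ_N$ whose increments are in $\{-1,0,1\}$ (since both $\xi^i_t$ have gradient increments in $\{-k/N,1-k/N\}$, the difference of gradients is in $\{-1,0,1\}$), and $\sum_x h(x)=A(t)$. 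The set of $x$ \emph{failing} the separation condition is exactly $\{x : h(x-1)=h(x)=h(x+1)=0\}$, i.e.\ the interior of the ``contact zone'' $Z:=\{x:h(x)=0\}$. So a corner of $\xi^1_t$ contributes to $U_1(t)$ unless it lies in the interior of a maximal run of zeros of $h$.

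The key step is a dichotomy on how concentrated the mass $A(t)$ is. Let $L$ be the total length of the complement of the contact zone, i.e.\ the number of $x$ with $h(x)>0$ together with the boundary points of each excursion; since each excursion of $h$ away from $0$ reaches height at most $H(\xi^1_t)+H(\xi^2_t)=:M$ (the excursion height is bounded by $\max h\le M$, as $h\le\max\xi^2-\min\xi^1\le$ something of that order — more carefully one uses $\max_x h(x)\le H(\xi^1_t)+H(\xi^2_t)$ after recentering, which holds because $\xi^2_t-\xi^1_t$ is a translate-invariant comparison), the mass carried by a run of length $\ell$ is at most $\ell M/2$-ish, so $A(t)\le (L/2)\max(M,1)$, giving $L\ge A(t)/\max(M,N^{1/2})$ once we also throw in the crude bound using $N^{1/2}$. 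Thus the union of the ``non-contact'' intervals has total length at least $\min(N, A(t)/\max(M,N^{1/2}))$ (capped at $N$ trivially). Now on this union, every $\xi^1_t$-corner contributes to $U_1(t)$: I would cover the non-contact region by $O(1)$-controlled intervals $[x,y]$, discard those of length $<N^{1/4}$ (they contribute a negligible additive error, absorbed by enlarging constants for $N$ large — and here one may need to be slightly careful, but since $A(t)\ge N^{3/2}2^{-K}\sim N$ on the relevant range this is harmless), and on each interval of length $\ge N^{1/4}$ apply the event $\mathcal{A}$, which says $j(x,y,\xi^1_t)\ge (\text{number of extrema})\ge$ ... wait — $\mathcal{A}$ gives an \emph{upper} bound $j\le \frac13\#[x,y]$; I should instead use that the number of extrema on a long interval is \emph{at least} $\frac13\#[x,y]$, which is the complementary statement one should extract: a run of length $\ell$ with \emph{fewer} than $\ell/3$ sign changes of $\gamma\xi^1$ is the anomalous event, and $\mathcal{A}$ (via $\cE$) excludes exactly the low-density-of-corners event. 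So on each good subinterval $\xi^1_t$ has at least $\frac13$ its length in corners, hence $\#U_1(t)\ge \frac13\cdot(\text{length of non-contact region, minus }N^{1/4}\text{-error})$.

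Putting these together yields $u(t)\ge\#U_1(t)\ge\frac13\min(N,\,A(t)/\max(M,N^{1/2}))$ after absorbing the $N^{1/4}$ rounding losses into the constant (legitimate since on $[\tau_{-1},\tau_K]$ the quantity $A(t)$ is at least of order $N\gg N^{1/4}$, and one can restrict attention to that range). The main obstacle I anticipate is the bookkeeping in the second step: relating the ``total length of excursions of $h$'' to $A(t)/M$ requires care because an excursion of $h$ of small height but long length carries little mass per site, so the bound $A(t)\le\frac12 L\cdot\max(M,1)$ must be argued via $h(x)\le\min(M,\text{dist to boundary of excursion})$, and then one must also handle the degenerate regime $A(t)$ small by the floor $N^{1/2}$; and separately, converting the covering-by-intervals into a clean application of $\mathcal{A}$ without double-counting corners near interval boundaries. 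None of this is deep, but it is where the constant $\frac13$ (rather than something larger) and the $N^{1/2}$ truncation get pinned down.
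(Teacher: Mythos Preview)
Your overall strategy---decompose the support of $h:=\xi^2_t-\xi^1_t$ into excursions (``bubbles''), bound the area of each via the Lipschitz property and the amplitude $M:=H(\xi^1_t)+H(\xi^2_t)$, and count corners of $\xi^1_t$ using $\mathcal A$---is exactly the paper's approach. But your execution has a genuine gap in the treatment of short bubbles, and this is precisely where the $N^{1/2}$ floor in the statement originates.

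You try to argue in two global steps: first $L\ge A(t)/\max(M,N^{1/2})$ for $L$ the total length of the non-contact region, and then $\#U_1(t)\ge L/3$ by applying $\mathcal A$ on each connected component and ``discarding'' those of length $<N^{1/4}$ as negligible error. The second step fails: nothing prevents \emph{all} of $L$ from sitting in short bubbles (for instance $\sim N^{1/2}$ bubbles each of length $\sim N^{1/4}$ and area $\sim N^{1/2}$, so $A(t)\sim N$ and $L\sim N^{3/4}$). Discarding them leaves you with zero corners, yet the target bound is $u(t)\gtrsim A(t)/N^{1/2}\sim N^{1/2}$. Your proposed rescue via $A(t)\gtrsim N$ does not help, since this example has exactly $A(t)\sim N$.

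The paper avoids this by working bubble-by-bubble rather than passing through $L$. For a bubble $[a,b]$ with $\#[a,b]\le N^{1/4}$ one does \emph{not} invoke $\mathcal A$; instead one notes that (i) the bubble contains at least one extremum of $\xi^1_t$ (e.g.\ where $\xi^1_t$ attains its minimum on $[a,b]$), and (ii) since $h$ is $1$-Lipschitz and vanishes at both endpoints of the bubble, $A_{[a,b]}(t)\le(\#[a,b])^2\le N^{1/2}$. Thus $u_{[a,b]}\ge 1\ge A_{[a,b]}/N^{1/2}$, and \emph{this} is the source of the $N^{1/2}$ in $\max(M,N^{1/2})$. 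For long bubbles one uses $\mathcal A$ (with the inequality reading $j(a,b,\xi^1_t)\ge\tfrac13\#[a,b]$; the $\le$ in the definition of $\mathcal A$ is a typo, as you correctly diagnosed) together with $A_{[a,b]}\le\#[a,b]\cdot M$. Note that this last bound, and more generally $\max_x h(x)\le M$, requires the existence of a contact point $x_0$ with $h(x_0)=0$; the no-contact case must therefore be handled separately (there every corner of $\xi^1_t$ lies in $U_1(t)$, and $\mathcal A$ applied to all of $\bbZ_N$ gives $u(t)\ge N/3$ directly). Summing the per-bubble inequalities $u_{[a,b]}\ge A_{[a,b]}/(3\max(M,N^{1/2}))$ then yields the lemma.
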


\begin{proof}
 
If $\xi^1_t$ and $\xi^2_t$ have no contact with each other, then 
$u(t)$ is equal to the total  number of flippable corners in $\xi^2_t$ and $\xi^1_t$.
If $\cA$ holds, this number is larger than $N/3$, which, by definition of $\cA$, is a lower bound for the number of corners on $\xi^1_t$ alone.
When there exists $x$ such that $\xi^1_t(x)=\xi^2_t(x)$, we consider the set of active coordinates 
\begin{equation}\label{defct}
C(t):=\{\exists y\in \{x-1,x,x+1\}, \xi^1_t(y)< \xi^2_t(y)\}.
\end{equation}
Note that when one of the $\xi^i_t$ (or both) have a local maximum at $x\in C(t)$ then when  the corresponding corner flips,f it changes the value of $A(t)$.
Our idea is to find a way to bound from below the number of $x$ in $C(t)$ for which $\xi^1(t)$ has a flippable corner, using the assumption that $\cA$ holds.

\medskip

Let us decompose $C(t)$ into connected components (for the graph $\bbZ_N$) which are intervals as defined in \eqref{theinterval}.
Assume that $[a,b]$ is a connected component of $C(t)$, it corresponds to a 
``bubble'' between $\xi^1_t$ and $\xi^2_t$ (see Figure \ref{bubble}).
For each bubble, we want to have a bound on the number of flippable corners and compare it to the 
area of the bubble.
Set 
\begin{equation}\label{defuab}
u_{[a,b]}(t):=j(a,b,\xi^1_t).
 \end{equation}
 and 
 \begin{equation}\label{bubarea}
A_{[a,b]}(t):=\sum_{x=a}^{b}\xi^2_t(x)-\xi^{1}_t(x).
\end{equation}
 Note that 
 \begin{equation}\label{lesum}\begin{split}
 u(t)&\ge \sum_{\text{ all bubbles}} u_{[a,b]}(t),\\
A(t)&= \sum_{\text{ all bubbles}} A_{[a,b]}(t).
 \end{split}\end{equation}

 \begin{figure}[hlt]
 \epsfxsize =12.5 cm
 \begin{center}
 \psfragscanon
 \psfrag{etw}{$\xi^2_t$}
  \psfrag{etav}{$\xi^1_t$}
    \psfrag{buble}{$[a,b]$}
     \epsfbox{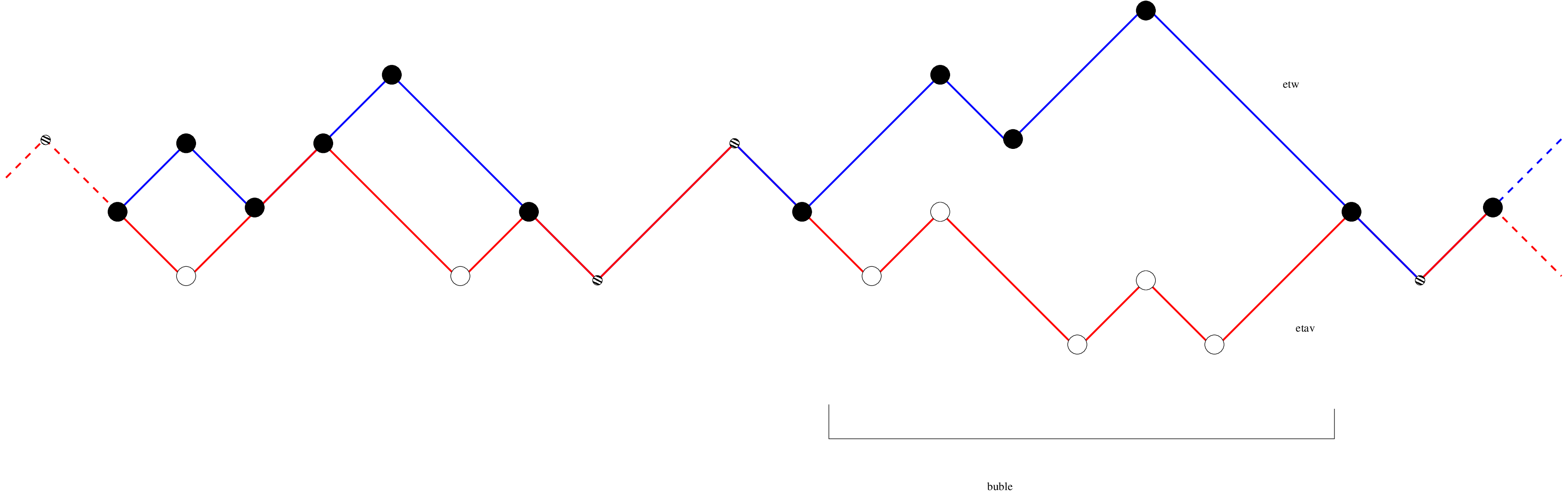}
      \end{center}
 \caption{\label{bubble} The bubble decomposition: The interval $[a,b]$ displayed here corresponds to a bubble.
  The large circle dots corresponds to corner which do not flip simultaneously for $\xi^1_t$ and $\xi^2_t$, the total number of them is $u(t)$.
 Among those, the white circles are the one which are counted in one of the $u_{[a,b]}(t)$ (note that some of the corners on $\xi^1_t$ are not counted). The smaller circles corresponds to corner which flip together for $\xi^1_t$ and $\xi^2_t$ and thus do not contribute to $u(t)$.}
 \end{figure}

For small bubbles (of length smaller than $N^{1/4}$), $\mathcal A$ does not give any information on the number of flippable corners. 
However, we can simply observe that in any bubble, there is at least one flippable corners (e.g. where $\min_{x\in [a,b]}\xi^1_x$ is attained).
If $\#[a,b] \le N^{1/4}$, the area of the bubble satisfies
\begin{equation}
A_{[a,b]}(t)\le N^{1/2}.
\end{equation}
This is because $\xi^2_t-\xi^{1}_t$ is a Lipchitz function and equals zero at both ends. Hence necessarily
\begin{equation}\label{esquiun}
 u_{[a,b]}(t)\ge \frac{A_{[a,b]}(t)}{N^{1/2}}.
\end{equation}
For large bubbles ($\#[a,b]\ge N^{1/4}$) we can use the fact that $\cA$ hold.
First, let us control the area:
as $\xi^1_t$ and $\xi^2_t$ are in contact we have 
\begin{equation}
\max_{x\in \bbZ_N}  \xi^2_t(x)- \xi^1_t(x)\le  H(\xi^1_t)+H(\xi^2_t),
\end{equation}
and hence 
\begin{equation}
A_{[a,b]}(t)\le \#[a,b] (H(\xi^1_t)+H(\xi^2_t)).
\end{equation}
By the definition of $\cA$ there are at least $\#[a,b]/3$ flippable corners on the path $\xi^1_t$ restricted to $[a,b]$.
Thus
\begin{equation}
 u_{[a,b]}(t)\ge \frac{A_{a,b}(t)}{3(
H(\xi^1_t)+H(\xi^2_t))}.
\end{equation}
and we can deduce (recall \eqref{esquiun}) that for any value $\#[a,b]$ we have
\begin{equation}\label{esqui2}
 u_{a,b}(t)\ge \frac{A_{a,b}(t)}{3\max(H(\xi^1_t)+H(\xi^2_t),\sqrt{N)})}.
\end{equation}
We conclude by summing \eqref{esqui2} over all bubbles and using \eqref{lesum}.
\end{proof}

The previous lemma gives us some control over $u(t)$ (if $\cA$ holds)
provided we can control $A(T)$ and 
\begin{equation}\label{defht}
H(t):=H(\xi^1_t)+H(\xi^2_t).
\end{equation}
To control the area, we use our multi-scale construction: for $t\in [\tau_{i-1}, \tau_i)$, we have
$$A(t)\ge N^{3/2} 2^{-i}.$$ 
To obtain a good control on $H(t)$ we use the following concentration result.

\begin{lemma}\label{cromeski}
There exists a constant $c$ such that for any $t\ge 0$ and $r\ge 0$.
\begin{equation}
\bbP\left[ H(t)\ge r \sqrt{N} \right]\le 2\exp(-c r^2). 
\end{equation}
\end{lemma}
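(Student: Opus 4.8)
The quantity $H(t) = H(\xi^1_t) + H(\xi^2_t)$ controls the vertical spread of the two interfaces. Since the two dynamics $\grad \xi^1_t$ and $\grad \xi^2_t$ are each stationary with law $\mu$ at every time $t$, the pointwise height oscillations $H(\xi^i_t) = \max_{x,y}|\xi^i_t(x) - \xi^i_t(y)|$ are, by the correspondence \eqref{projec} between the height function and the particle configuration, exactly equal to $\max_{x,y}\left|\sum_{z=x+1}^y(\eta(z) - \tfrac kN)\right|$ where $\eta = \grad\xi^i_t \sim \mu$. That is, $H(\xi^i_t) = H(\eta)$ in the notation of \eqref{defh}, and $H(\eta)$ has law governed by the equilibrium density fluctuation bound \eqref{fluquetec}. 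So the plan is: first fix a single time $t$, invoke \eqref{fluquetec} to get $\bbP[H(\xi^i_t) \ge r\sqrt N] \le 2\exp(-cr^2)$ for each $i \in \{1,2\}$; then combine the two via a union bound (adjusting the constant $c$ by a factor $4$, absorbing the factor $2$) to obtain $\bbP[H(t) \ge r\sqrt N] \le 2\exp(-cr^2)$ with a possibly smaller $c$.

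More precisely, one writes
\begin{equation}
\bbP\left[H(t) \ge r\sqrt N\right] \le \bbP\left[H(\xi^1_t) \ge \tfrac r2 \sqrt N\right] + \bbP\left[H(\xi^2_t) \ge \tfrac r2 \sqrt N\right] \le 4\exp\left(-\tfrac{c}{4} r^2\right),
\end{equation}
and then, since $4\exp(-\tfrac c4 r^2) \le 2\exp(-c' r^2)$ for a suitable smaller constant $c'>0$ and all $r$ large enough (and the statement is trivial for bounded $r$ by enlarging the constant in front, which is fixed at $2$, so one rather adjusts $c'$ down so the bound holds for all $r\ge 0$), the claimed inequality follows with $c$ replaced by $c'$. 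The key input is simply that each marginal is at equilibrium — this is precisely \eqref{equistart} — so that \eqref{fluquetec} (itself a corollary of Proposition \ref{secondfluctu}, via the remark taking $t=\infty$) applies verbatim to $H(\xi^i_t)$.

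**Main obstacle.** There is essentially no deep obstacle here; the lemma is a bookkeeping consequence of stationarity plus the already-established equilibrium fluctuation estimate. The only point requiring a little care is the translation of the height-function oscillation $H(\xi)$ into a statement about the particle configuration $\grad\xi$: one must check that $\xi(x) - \xi(y) = \sum_{z=y+1}^x(\grad\xi(z) - \tfrac kN)$ (up to sign and orientation of the interval $[y,x]$ in $\bbZ_N$), which is immediate from \eqref{projec}, so that $\max_{x,y}|\xi(x)-\xi(y)|$ coincides with the left-hand side of the event in \eqref{fluquetec}. After that, the union bound and the constant-juggling are routine. Note also that the bound is uniform in $t$ precisely because $\grad\xi^i_t$ has law $\mu$ for every $t$, not merely at $t=0$; this is the whole reason the estimate can later be used inside the multi-scale argument at the stopping times $\tau_i$.
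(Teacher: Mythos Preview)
Your proposal is correct and follows exactly the same approach as the paper: the paper's proof is a single sentence observing that for each $i=1,2$ and any $t\ge 0$ the gradient $\grad\xi^i_t$ has law $\mu$ by stationarity, so the equilibrium fluctuation bound \eqref{fluquetec} applies directly to $H(\xi^i_t)$. Your write-up simply makes explicit the union bound and the constant adjustment (and the harmless $\sqrt{k}$ versus $\sqrt{N}$ discrepancy, which disappears since this section treats $k=N/2$).
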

\begin{proof}
This just comes from the fact that for any $t>0$, $i=1,2$, $\grad \xi^i_t$ are distributed according to $\mu$, and then we use \eqref{fluquetec}.
\end{proof}

We use it to show that most of the time $H(t)$ is of order $\sqrt{N}$.
In fact we need a sightly more twisted statement that fits the multi-scale analysis.
For the remainder of the proof set 
\begin{equation}
\alpha:=\left(\sum_{i\ge 0} (i+1)^2 \right)^{-1}
\end{equation}

\begin{lemma}\label{coniduam}
For any $\gep>0$, there exists a constant $C(\gep)$ such that 
for any $T\ge 0$

\begin{equation}
\bbP\left[ \exists i\in \{0,\dots, K\}, \int^T_0 \ind_{\{ H(t)\ge C (i+1)^2 \sqrt{N}\}}\dd t \ge (\alpha/2) (i+1)^{-2}T \right] \le \gep.
\end{equation}
\end{lemma}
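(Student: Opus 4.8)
The plan is to apply the tail bound of Lemma \ref{cromeski} together with a union bound over $i$ and a Markov-type inequality on the occupation time of each "bad" set $\{H(t)\ge C(i+1)^2\sqrt N\}$. The key observation is that for fixed $i$, the quantity $\int_0^T \ind_{\{H(t)\ge C(i+1)^2\sqrt N\}}\dd t$ has, by Fubini, expectation $\int_0^T \bbP[H(t)\ge C(i+1)^2\sqrt N]\dd t$, and by Lemma \ref{cromeski} each integrand is bounded by $2\exp(-cC^2(i+1)^4)$, which is uniform in $t$. Hence the expected occupation time is at most $2T\exp(-cC^2(i+1)^4)$. Comparing this with the threshold $(\alpha/2)(i+1)^{-2}T$ in the statement, Markov's inequality gives that the $i$-th bad event has probability at most
\begin{equation}
\frac{2T\exp(-cC^2(i+1)^4)}{(\alpha/2)(i+1)^{-2}T}=\frac{4(i+1)^2}{\alpha}\exp\left(-cC^2(i+1)^4\right).
\end{equation}

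**Next I would** sum over $i\in\{0,\dots,K\}$ — in fact over all $i\ge 0$, which only makes the bound larger. The series $\sum_{i\ge 0}(i+1)^2\exp(-cC^2(i+1)^4)$ converges, and its value tends to $0$ as $C\to\infty$ (each term is decreasing in $C$ and the $i=0$ term alone is $e^{-cC^2}\to 0$; dominated convergence handles the tail). Therefore one can choose $C=C(\gep)$ large enough (depending only on $\gep$, $\alpha$ and the absolute constant $c$ from Lemma \ref{cromeski}) so that
\begin{equation}
\frac{4}{\alpha}\sum_{i\ge 0}(i+1)^2\exp\left(-cC^2(i+1)^4\right)\le \gep,
\end{equation}
and the union bound over $i\in\{0,\dots,K\}$ then yields the claim for every $T\ge 0$. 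Note that the bound is genuinely uniform in $T$ because the per-$i$ probability estimate has the factor $T$ in both numerator and denominator, so it cancels; this is exactly why the statement can afford to be stated for arbitrary $T\ge 0$.

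**The only mild subtlety** — and it is not really an obstacle — is measurability of the occupation-time functional $t\mapsto \ind_{\{H(t)\ge r\sqrt N\}}$, which is immediate since $H(\xi^i_t)$ is a càdlàg functional of the càdlàg paths $\xi^i_t$, so Fubini's theorem applies without issue. I expect no real difficulty here: the lemma is a soft consequence of the pointwise-in-time Gaussian tail bound of Lemma \ref{cromeski}, the summability of $\sum (i+1)^2 e^{-cC^2(i+1)^4}$, and the fact that $\alpha$ was defined precisely so that $\sum_i (\alpha/2)(i+1)^{-2}\le 1/2$, leaving room for the union bound. The constant $C(\gep)$ produced this way feeds into the subsequent lower bound on $u(t)$ via Lemma \ref{fromzea}, which is the actual engine of the multi-scale estimate for $\tau$.
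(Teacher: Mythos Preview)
Your proposal is correct and follows essentially the same route as the paper: apply Lemma \ref{cromeski} pointwise in $t$, use Fubini to bound the expected occupation time, then Markov's inequality and a union bound over $i$, and finally choose $C$ large enough so that the resulting series is at most $\gep$. The only cosmetic difference is that you (correctly) obtain $\exp(-cC^2(i+1)^4)$ from the substitution $r\mapsto C(i+1)^2$, whereas the paper writes $\exp(-cr^2(i+1)^2)$; either exponent suffices since the relevant series converges and tends to zero as the constant grows.
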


\begin{proof}
For a fixed $i$ from Lemma \ref{cromeski}, we have

\begin{equation}
\bbE\left[ \int^T_0 \ind_{\{H(t)\ge r(i+1)^2 \sqrt{N}\}} \dd t \right] \le 2Te^{-cr^2(i+1)^2}. 
\end{equation}
Hence from the Markov inequality 
\begin{equation}
\bbE\left[ \int^T_0 \ind_{\{H(t)\ge r(i+1)^2 \sqrt{N}\}}\dd t\le (\alpha/2)  (i+1)^{-2}T \right] \le 4\alpha^{-1}(i+1)^2 \exp(-cr^2(i+1)^2).
\end{equation}
Hence we obtain the result by choosing $C=r_0$ sufficiently large so that 
\begin{equation}
4 \sum_{i\ge 0} \alpha^{-1}(i+1)^2 \exp(-cr^2(i+1)^2)\le \gep.
\end{equation}
\end{proof}
Combining Lemma \ref{cromican}, \ref{smalla}, \ref{fromzea}, and \ref{coniduam} we can now conclude.

\begin{proof}[Proof of \eqref{nahnou1}]

Let $\gep$ be fixed.  
We fix a constant $C_1(\gep)$  such that Lemma \ref{coniduam} holds for $\gep/3$ instead of $\gep$.
$C_2(\gep,s)$ is chosen so that Lemma \ref{cromican} holds for $\gep/3$.
We define the events

\begin{equation}
\begin{split}
\mathcal B&:= \left\{ \forall i\in \{0,\dots, K\}, \int^{T}_0 \ind_{\{ H(t)\ge C_1 (i+1)^2 \sqrt{N}\}}\dd t\le(\alpha/2) (i+1)^{-2}T \right\},\\
\mathcal C&:=\left \{\forall i\in \{0,\dots, K\}, \cT_i\le C_2 N^3 3^{-i}\right\} \cap \{\cT_\infty\le  C_2 N^2 \},
\end{split}
\end{equation}
where 
\begin{equation}\begin{split}
T&:=6 N^2 C_1 C_2 \alpha^{-1} \max_{i\ge 0}\left[ (i+1)^4(2/3)^i\right],\\
T'&:=T+C_2N^2.
\end{split}\end{equation}

We assume also that $N$ is large enough so that $\cA$ holds with probability larger than $1-\gep/3$ (cf. Lemma \ref{fromzea}).
Hence we have 
$$\bbP[\cA\cap\cB\cap \cC] \ge 1-\gep$$
Now what remains to prove is that 

\begin{equation}\label{letrucs} 
\{\cA\cap\cB\cap \cC\}\subset \{\tau \le T'\}.
\end{equation}
This implies \eqref{nahnou1},
with 
\begin{equation}
C(\gep,s)=T'/N^2=C_2 \left( 6C_1\alpha^{-1} \max_{i\ge 0}\left[ (i+1)^4(2/3)^i\right]+1\right).
\end{equation}
We split the proof of \eqref{letrucs} in two statements.
We want to show first  that on the event $\cA\cap \cB\cap \cC$,
\begin{equation}\label{ige1}
\tau-\tau_K\le C_2 N^2.
\end{equation}
and then that 
\begin{equation}\label{ige2}
\forall i\in \{0,\dots, K\},  \quad  (\tau_{i}-\tau_{i-1})\le C_3(i+1)^{-2}N^2 ,
\end{equation}
where 
$$C_3:=6 C_2C_1\max_{i\ge 0}\left[ (i+1)^4(2/3)^i\right].$$
Combined these inequalities, we have
\begin{equation}
\tau\le C_2N^2+\sum_{i=0}^K C_3(i+1)^{-2}N^2\le T'.
\end{equation}
Note that the \eqref{ige1} is an immediate consequence of $\cC$ as 
\begin{equation}
\cT_\infty=\int_{\tau_K}^\tau u(t)\dd t\ge \tau-\tau_K.
\end{equation}
Let us turn to \eqref{ige2}.
Let us assume that the statement is false and let $i_0$ 
denote the smallest $i$ such that
$$(\tau_{i}-\tau_{i-1})> C_3 (i+1)^{-2} N^2.$$
The definition of $i_0$ (the fact that it is the smallest) implies that 

\begin{equation}\label{bet}
\tau_{i_0-1}+ C_3(i_0+1)^{-2} N^2\le T.
\end{equation}
From $\cB$ we have (using \eqref{bet} inequality for the second inequality)

\begin{multline}\label{cardrive}
 \int_{\tau_{i_0-1}}^{\tau_{i_0}} \ind_{\{H(t)\le C_1 (1+i_0)^2 \sqrt{N}\}}\dd t\\
 \ge \int_{\tau_{i_0-1}}^{\tau_{{i_0}}+ C_3 (i_0+1)^{-2} N^2} \ind_{\{ H(t)\le C_1 (1+i_0)^2 \sqrt{N}\}}\dd t \\
 \ge C_3 (i_0+1)^{-2} N^2-\int_0^T  \ind_{\{ H(t)\ge C_1 (1+i_0)^2 \sqrt{N} \}}\dd t\\
 \ge C_3 (i_0+1)^{-2} N^2-(\alpha/2) T(i_0+1)^{-2}\ge  \frac{1}{2} C_3(i_0+1)^{-2} N^2.
 \end{multline}
For all $t\le \tau_{i_0}$, we have $A(t)\ge N^{3/2} 2^{-i_0}$ by definition and thus
from Lemma \ref{fromzea} and the assumption that $\cA$ holds, we have
\begin{equation}
u(t)\ge \frac 1 3 \min\left(N, \frac{A(t)}{\max(H(t),N^{1/2})}\right)\ge\frac{N^{3/2} 2^{-i_0}}{3C_1 (i_0+1)^2 \sqrt{N}}\ind_{\{H(t)\le C_1 (i_0+1)^2 \sqrt{N}\}}.
\end{equation}
Hence from \eqref{cardrive}
\begin{multline}
\cT_{i_0}\ge  \frac{N^{3/2} 2^{-i_0}}{C_1  (i_0+1)^2 \sqrt{N}} \int_{\tau_{i_0-1}}^{\tau_{i_0}} \ind_{\{H(t)\le C_1 (i_0+1)^2 \sqrt{N}\}}\dd t\\
\ge \frac{1}{6 C_1} C_3 N^3 2^{i_0}(i_0+1)^{-4}\ge 3^{i_0}C_2 N^2,
\end{multline}
where the last inequality comes from the definition of $C_3$.
This brings a contradiction to the fact that $\cC$ holds and ends the proof of \eqref{ige1}.

\end{proof}

\subsection{Proof of \eqref{nahnou2}}

We want to prove now that starting from $\chi\in \mathcal G_s$,
we get significantly closer to equilibrium after a time $uN^2$.
By mononicity in $u$ and $s$ we can restrict to the case where $u=2s^{-1}$ (to avoid using too many letters) and assume that $s$ is sufficiently large.
The elements of the proof are essentially the same that for \eqref{nahnou1} but we have to be more careful.
Instead of Lemma \ref{cromican} we have to prove the following statement
\begin{lemma}\label{cromican3}
There exists a constant $C$ such that for all $s$ sufficiently large 
\begin{equation}
\bbP\left[ \left \{\forall  i\in \{0,\dots, K\}, \cT_i\le s^{-7} N^3 3^{-i}\right\} \cap \{\cT_\infty\le  s^{-1}N^2 \}\right]\ge e^{-Cs^{9}}
\end{equation}
\end{lemma}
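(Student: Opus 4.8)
The plan is to run the argument of Lemma~\ref{cromican} in reverse: rather than bounding the probability that the rescaled first-passage times $\cT_i,\cT_\infty$ are \emph{not too large}, we bound from below the probability that they are \emph{all} anomalously small. Recall from \eqref{defjt}--\eqref{defx} that after the time change $t\mapsto J(t)$ the area $A$ becomes a continuous-time symmetric nearest-neighbour walk $X$ on $\bbZ_+$ absorbed at $0$, that $\cT_0$ is the time $X$ needs to descend from $A(0)$ to $\lfloor N^{3/2}\rfloor$, that $\cT_i$ (for $1\le i\le K$) is the time to descend from $\lfloor N^{3/2}2^{-(i-1)}\rfloor$ to $\lfloor N^{3/2}2^{-i}\rfloor$, and that $\cT_\infty$ is the time to descend from $\lfloor N^{3/2}2^{-K}\rfloor$ to $0$. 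Since $X$ is nearest-neighbour it visits these levels in succession, so by the strong Markov property the increments chain together: conditionally on $X$ reaching a given level, the next descent time is a fresh first-passage time whose law depends only on the distance to the next level. Consequently the probability in the lemma factorises, and it suffices to bound each factor from below.

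The analytic input is the small-deviation counterpart of \eqref{crooop}: there are constants $c_0\in(0,1)$, $c_1>0$ such that, writing $T_a$ for the first time a symmetric nearest-neighbour walk started at $0$ reaches $a$,
\begin{equation}\label{smalltime}
\bbP\left[T_a\le \delta a^2\right]\ge c_0\, e^{-c_1/\delta}\qquad\text{for all integers }a\ge1\text{ and all }\delta>0\text{ with }\delta a^2\ge1,
\end{equation}
a standard estimate (e.g.\ via Donsker's theorem together with the small-time asymptotics of the first-passage time of level $1$ by Brownian motion); for $\delta$ bounded away from $0$ one has instead $\bbP[T_a\le\delta a^2]\ge 1-C_1\delta^{-1/2}$ directly from \eqref{crooop}. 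The expensive factor is $\cT_0$. By construction \eqref{defh0} the initial area $A(0)=2H_0N$ is already of order $sN^{3/2}$; by \eqref{grimic} (with $r=s$) we have $\bbP[A(0)\le 4sN^{3/2}]\ge 1-2e^{-cs^2}\ge\tfrac12$ for $s$ large, and on that event $\cT_0$ is stochastically dominated by $T_{\lceil 4sN^{3/2}\rceil}$. Since $s^{-7}N^3=\delta\,\big(\lceil 4sN^{3/2}\rceil\big)^2$ for some $\delta$ of order $s^{-9}$, \eqref{smalltime} gives
\begin{equation}\label{T0bound}
\bbP\left[\cT_0\le s^{-7}N^3 \;\big|\; A(0)\le 4sN^{3/2}\right]\ge c_0\, e^{-C s^{9}} .
\end{equation}
This is exactly where the power $s^9$ enters: forcing $\cT_0$ below $s^{-7}N^3$ when $A(0)$ is already of order $sN^{3/2}$ requires the walk to descend about $s^9$ times faster than diffusively, at cost $e^{-\Theta(s^9)}$; the remaining scales turn out to be cheaper.

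For $i\ge1$, writing $a_i:=N^{3/2}2^{-i}$ one has $s^{-7}N^33^{-i}=s^{-7}(4/3)^i a_i^2$, so (up to the harmless integer rounding already present in Lemma~\ref{cromican}) the factor coming from $\cT_i$ is bounded below, via \eqref{smalltime}, by $c_0\,e^{-c_1 s^{7}(3/4)^i}$. Fix the threshold $i^{\ast}:=\lceil\log_{4/3}(4C_1^2 s^{7})\rceil$, which is $O(\log s)$. For $1\le i\le i^{\ast}$ we use this exponential bound, and the product of the corresponding factors is at least $c_0^{\,i^{\ast}}\exp\!\big(-c_1 s^{7}\sum_{i\ge1}(3/4)^i\big)\ge c_0^{\,i^{\ast}}e^{-3c_1 s^{7}}\ge e^{-C' s^{7}}$ for $s$ large, since $c_0^{\,i^{\ast}}=e^{-O(\log s)}$ is negligible against $e^{-3c_1s^7}$ (if $K\le i^{\ast}$ all scales fall in this range and the same bound applies, because $c_0^{K}\ge c_0^{\,i^{\ast}}$). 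For $i^{\ast}<i\le K$ one checks $\delta_i:=s^{-7}(4/3)^i\ge 4C_1^2(4/3)^{i-i^{\ast}}$, so the crude bound $\bbP[\cT_i\le s^{-7}N^33^{-i}\mid\cdot]\ge 1-C_1\delta_i^{-1/2}\ge 1-\tfrac12(3/4)^{(i-i^{\ast})/2}$ applies and the product over these scales is at least $\prod_{j\ge0}\big(1-\tfrac12(3/4)^{j/2}\big)=:\rho_{\ast}>0$. Finally $\cT_\infty$ is the first-passage time from level $\lfloor N^{3/2}2^{-K}\rfloor\le N$ to $0$, hence stochastically dominated by $T_N$, so \eqref{smalltime} with $\delta=s^{-1}$ gives $\bbP[\cT_\infty\le s^{-1}N^2\mid\cdot]\ge c_0 e^{-c_1 s}$. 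Multiplying the factor $\tfrac12$ from conditioning on $\{A(0)\le 4sN^{3/2}\}$, the bound \eqref{T0bound}, the product over $i\ge1$, and the $\cT_\infty$-factor, and absorbing $Cs^9+C's^7+c_1 s\le C'' s^9$ for $s$ large, yields the asserted lower bound $e^{-C'' s^9}$.

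The main obstacle is the bookkeeping that prevents the product of the roughly $\tfrac12\log_2 N$ factors from producing a spurious $N$-dependent loss: only the $O(\log s)$ coarsest-to-intermediate scales may be allowed an exponentially-small-in-$s$ factor, while the remaining $\Theta(\log N)$ scales must be handled with the crude bound $1-C_1\delta^{-1/2}$ so that their combined contribution stays bounded below by the fixed constant $\rho_{\ast}$; identifying the right threshold $i^{\ast}$ and the matching small-deviation estimate \eqref{smalltime} is the crux. The remaining points---the strong-Markov factorisation, the tail bound \eqref{grimic} on $A(0)$, and (afterwards) the modifications for general $k$ and the deduction of \eqref{nahnou2} along the lines of the proof of \eqref{nahnou1}---are routine.
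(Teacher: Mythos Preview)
Your proof is correct and takes essentially the same approach as the paper: factorise via the strong Markov property for $X$ into independent first-passage times, use small-deviation estimates for each factor, and condition on $\{A(0)\le 4sN^{3/2}\}$ to handle $\cT_0$, which is indeed where the cost $e^{-\Theta(s^9)}$ arises (the remaining scales contribute only $e^{-O(s^7)}$ and $\cT_\infty$ only $e^{-O(s)}$). The only difference is cosmetic: the paper packages the small-deviation input as a single bound $\bbP[T_a\le a^2/v]\ge e^{-C_1\max(v^{1/2},v)}$ valid for all $1\le v\le a$, which merges your two regimes and lets one sum the exponents over all $i\ge1$ directly to $\sum_{i\ge1}\max\bigl((3/4)^{i/2}s^{7/2},(3/4)^{i}s^{7}\bigr)=O(s^7)$ without introducing the explicit threshold $i^\ast$; your two-regime split achieves the same end and is perhaps more transparent about why no $N$-dependent loss appears.
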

\begin{proof}
A first observation is that, by the Markov property for the random walk $X_t=A(J(t))$ (recall \eqref{defx}), the $\cT_i$ are independent. 
To evaluate $\bbP\left[ \cT_i\le s^{-6} N^3 3^{-i}\right]$,
we are going to use a classical estimate of first hitting time of a given level for a simple symmetric random walk:  
the there exists a constant $C_1$ such that for all $a\ge 1$, for all $v\le a$ one has (using the notation of \eqref{cromican})
\begin{equation}\label{crooops}
\bbP\left[ T_{a} \le a^{2}/v \right]\ge e^{-C_1 \max(v^{1/2},v)},
\end{equation}
(it is sufficient to check the estimate when $v$ is large as for $v$ close to zero it is just equivalent to \eqref{crooop}).
The time $\cT_\infty$ is stochastically dominated by $T_N$ and thus
\begin{equation}\label{triton}
\bbP[\cT_\infty\le  s^{-1}N^2]\ge e^{-C_1 s}.
\end{equation}
Neglecting the effect of integer rounding, for $i\ge 0$, $\cT_i$ is equal in law to $T_{2^{-i}N\sqrt{k}}$.
Hence from \eqref{crooop} we have 
\begin{equation}\label{gronac}
\bbP[ \cT_i\ge s^{-7} N^3 3^{-i} ]\ge e^{-C_1 \max((4/3)^{-i/2}s^{7/2},(4/3)^{-i}s^{7}))}.
\end{equation}
Then note that there exists a constant $C_2$ such that for all  $s\ge 1$
\begin{equation}\label{okafor}
C_1\sum_{i=1}^K \max\left((4/3)^{-i/2}s^{7/2},(4/3)^{-i}s^{7}\right)\ge C_2s^7.
\end{equation} 
For $i=0$, $\cT_0$ depends on initial value of the area.
Now let us note that from \eqref{grimic} we have for $s$ sufficiently large
\begin{equation}\label{grima}
\bbP\left[A(0)\le 4s N \right]\ge1- 2e^{-cs^2}\ge 1/2.
\end{equation}
Then conditioned on the event $\left\{ A(0)\le 4sN \right\}$, $\cT_0$ is dominated by 
$T_{4sN}$ and hence

\begin{equation}
\bbP\left[\cT_0\ge s^{-7} N^3 \ | \ A(0)\le 4s N   \right]\ge \exp\left(-16 C_1 s^{9}\right).
\end{equation}
Combined with  \eqref{grima}, this gives us
\begin{equation}\label{gronic}
\bbP\left[\cT_0\ge s^{-4} N^3 \right]\ge e^{-16C_1 s^{9}}/2.
\end{equation}
Using the independence and multiplying the inequalities \eqref{gronic}, \eqref{gronac} and \eqref{triton}  (and using \eqref{okafor}) we obtain the result for some appropriate $C$.
\end{proof}

We also need an estimate on the probability that $H(t)$ (recall \eqref{defht}) is too large which slightly differs from  Lemma \ref{coniduam}
\begin{lemma}\label{coniduam2}
Recall $\alpha:=(\sum_{i\ge 0}(i+1)^{-2})^{-1}$. There exists a constant $C$ such that for all $s$ sufficiently large and all $T$ 

\begin{equation}
\bbP\left[ \exists i\in \{0,\dots, K\}, \int^{T}_0 \ind_{\{ H(t)\ge s^{5} (i+1)^{2} \sqrt{N}\}}\dd t \ge  (\alpha/2)(i+1)^{-2}T \right] \le e^{-C s^{10}}.
\end{equation}
\end{lemma}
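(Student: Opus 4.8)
The plan is to follow line by line the proof of Lemma \ref{coniduam}, changing only the truncation level, which we now take proportional to $s^5$ rather than to a fixed constant, and keeping track of the resulting decay in $s$ instead of absorbing it into the multiplicative constant. The sole analytic ingredient is the pointwise concentration estimate of Lemma \ref{cromeski}: it applies at every fixed time because, for any $t\ge 0$ and $i=1,2$, $\grad\xi^i_t$ has law $\mu$ (recall \eqref{equistart}), so that $H(t)=H(\xi^1_t)+H(\xi^2_t)$ (recall \eqref{defht}) is governed by \eqref{fluquetec}.

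First I would fix $i\in\{0,\dots,K\}$ and apply Lemma \ref{cromeski} at the level $r=s^5(i+1)^2$; using $(i+1)^4\ge(i+1)^2$ this gives, for every $t\ge 0$,
\[
\bbP\left[H(t)\ge s^5(i+1)^2\sqrt N\right]\le 2\exp\!\left(-c\,s^{10}(i+1)^2\right),
\]
hence, integrating in $t$ over $[0,T]$ (Tonelli),
\[
\bbE\left[\int_0^T\ind_{\{H(t)\ge s^5(i+1)^2\sqrt N\}}\dd t\right]\le 2T\exp\!\left(-c\,s^{10}(i+1)^2\right).
\]
Markov's inequality at the threshold $(\alpha/2)(i+1)^{-2}T$ then yields, uniformly in $T\ge0$ and in $N$,
\[
\bbP\left[\int_0^T\ind_{\{H(t)\ge s^5(i+1)^2\sqrt N\}}\dd t\ge\frac{\alpha}{2}(i+1)^{-2}T\right]\le \frac{4}{\alpha}(i+1)^2\exp\!\left(-c\,s^{10}(i+1)^2\right).
\]

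Next I would take a union bound over $i$, enlarging the finite sum over $\{0,\dots,K\}$ to the whole series over $i\ge0$ (which only weakens the estimate); this bounds the probability in the statement by $(4/\alpha)\sum_{i\ge0}(i+1)^2\exp(-c\,s^{10}(i+1)^2)$. For $s\ge1$ one factors $\exp(-c\,s^{10})$ out of the $i=0$ term and bounds the remainder by the convergent series $C_0:=\sum_{i\ge0}(i+1)^2e^{-c((i+1)^2-1)}$, so the whole expression is at most $(4C_0/\alpha)\exp(-c\,s^{10})$; choosing $s$ large enough that $4C_0/\alpha\le\exp((c/2)s^{10})$ gives the bound $\exp(-(c/2)s^{10})$, i.e.\ the claim with $C=c/2$.

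I do not expect a genuine obstacle here: the argument is a routine variation of Lemma \ref{coniduam}, and the only point requiring a word of care is the convergence of the sum over scales $i$, which holds because the quadratic prefactor $(i+1)^2$ produced by Markov's inequality is crushed by the Gaussian-type factor $\exp(-c\,s^{10}(i+1)^2)$ as soon as $s\ge1$. Note that the exponent $s^{10}$ is exactly $(s^5)^2$, matching the $r^2$ in Lemma \ref{cromeski}, and that the estimate obtained is uniform in both $N$ and $T$, as the statement demands.
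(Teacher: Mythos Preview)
Your proof is correct and follows exactly the approach the paper indicates: as in Lemma \ref{coniduam}, apply Lemma \ref{cromeski} pointwise, integrate, use Markov's inequality at each scale, and sum over $i$. The only novelty is tracking the dependence on $s$ (via $r=s^5(i+1)^2$, so $r^2=s^{10}(i+1)^4\ge s^{10}(i+1)^2$), which you handle correctly.
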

\begin{proof}
As for Lemma \ref{coniduam} we just use Lemma \ref{cromeski} and the Markov inequality.
\end{proof}

\begin{proof}[Proof of \eqref{nahnou2}]

Set $T:=s^{-1}N^2$ and
\begin{equation}
\begin{split}
\mathcal B'&:= \left\{ \forall i\in \{0,\dots, K\}, \int^{T}_0 \ind_{\{ H(t)\ge  s^{5} (i+1)^{2} \sqrt{N}\}}\dd t\le (\alpha/2) (i+1)^{-2}T \right\},\\
\mathcal C'&:=\left \{\forall i\in \{0,\dots, K\}, \cT_i\le s^{-7} N^3 3^{-i}\right\} \cap \{\cT_\infty\le N^2s^{-1} \},
\end{split}
\end{equation}

Then from Lemma \ref{cromican}, \ref{smalla}, and \ref{coniduam} we have for $s$ sufficiently large, for $N$ large enough (depending on $s$)
\begin{equation}
\bbP\left[\cA\cap \cB'\cap \cC'  \right]\ge \exp(-C_1 s^9)/2.
\end{equation}
What remains to prove is that $\tau \le 2N^2$ on the event $\cA\cap \cB'\cap \cC'$.
First, we notice that from the definition of $\cT_\infty$ \eqref{freddo}, $\cC'$ readily implies that 
$$\tau-\tau_K\le \cT_\infty\le  N^2 s^{-1}$$
Hence to conclude we need to show that
\begin{equation}
\forall i\in \{0,\dots,K\}, \quad   \tau_i-\tau_{i-1}\le \alpha (i+0)^{-2} T.
\end{equation}
Assume the statement is false  and let $i_0$ be the smallest index such that it is not satisfied.
Using Lemma \ref{fromzea} we have
\begin{multline}
 \cT_{i_0}=\int_{\tau_{i_0-1}}^{\tau_{i_0}} u(t)\dd t
 \ge  \int_{\tau_{i_0-1}}^{\tau_{i_0}} \frac 1 3  \min\left( N , \frac{A(t)}{\max(H(t),N^{1/2})}\dd t\right)\\
\ge  \frac{2^{-i_0} N}{3s^{5} (i+1)^{2}} \int_{\tau_{i_0-1}}^{\tau_{i_0}} \ind_{\{H(t)\le s^{5} (i+1)^{2} \sqrt{N}\}}\dd t.
\end{multline}
Then one has from the definition of $i_0$
$$\tau_{i_0-1}+  \alpha (i+0)^{-2}\le N^2.$$
Hence from $\cC'$
\begin{multline}
 \int_{\tau_{i_0-1}}^{\tau_{i_0}} \ind_{\{H(t)\le s^{5} (i+1)^{2} \sqrt{N}\}}\dd t\\
 \ge \int_{\tau_{i_0-1}}^{\tau_{i_0-1}+\alpha (i_0+1)^{-2} T} \ind_{\{ H(t)\le s^5 (1+i_0)^2 \sqrt{N}\}}\dd t \\
 \ge \alpha (i_0+1)^{-2} N^2-\int_0^{T}  \ind_{\{ H(t)\ge s^5 (1+i_0)^2 \sqrt{N} \}}\dd t\\
 \ge (\alpha/2)(i_0+1)^{-2} T,
 \end{multline}
 and thus (recall $T=N^2s^{-1}$)
 \begin{equation}
 \cT_{i_0}\ge (\alpha/2)2^{-i_0}(i_0+1)^{2} N^3s^{-6}\ge  C_2 3^{-i_0} N^3 s^{-6}.
 \end{equation}
for an explicit $C_2$. This brings a contradiction to $\cB'$ if  $s$ is chosen sufficiently large.

\end{proof}

 \subsection{Proof of Proposition \ref{nahnou} for arbitrary $k$}
 
 The overall strategy is roughly the same, except that we start with an area which is of order $k^{1/2}N$. Hence most 
 of the modifications in the proof can be performed just writing $\sqrt{k}$ instead of $N^{1/2}$. However 
 Lemma \ref{smalla} does not hold for small values of $k$ and one 
 needs a deeper change there. We define the $(\tau_i)_{i\ge 0}$ as follows ($\tau_{-1}:=0$)
\begin{equation}
\tau_i:=\inf\{t\ge 0 \ | \ A(t)\le  k^{1/2}N2^{-i} \}.
\end{equation}
and we set
\begin{equation}\label{defK2}
K_N:=\left\lceil \frac{1}{2}\log_2 k \right\rceil.
\end{equation}
Note that a number of $\tau_i$ can be equal to zero if $A(0)\le N^{3/2}$.

 \medskip

The time changed version $\cT_i$, $i\in \{0,\dots,K\}\cup\{\infty\}$ of $\gD\tau_i$ are defined as in \eqref{freddo}.
We first write down how Lemma \ref{cromican}-\ref{cromican3} and \ref{coniduam}-\ref{coniduam2} can be reformulated in the context of $k$ particles
(the proofs are exactly the same and thus are not included).

\begin{lemma}\label{cromican2}
Given $\gep$
there exists a constant $C_1(\gep,s)$ such that 
\begin{equation}\label{cranchk}
\bbP\left[ \left \{\exists i\in \{0,\dots, K\}, \cT_i\ge C_1 N^2 k 3^{-i}\right\} \cup \{\cT_\infty> C_1 N^2 \}\right]\le \gep.
\end{equation}
and a constant $C_2$ independent of the parameters
\begin{equation}\label{cranchk2}
\bbP\left[ \left \{\exists i\in \{0,\dots, K\}, \cT_i\ge N^2 k s^{-6}\right\} \cup \{\cT_\infty> N^2 \}\right]\ge  \exp\left(-C_2s^{8}\right).
\end{equation}
\end{lemma}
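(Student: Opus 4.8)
The plan is to prove \eqref{cranchk2} by noticing that the event displayed there contains the sub-event $\{\cT_\infty>N^2\}$, and that $\bbP[\cT_\infty>N^2]$ is bounded below by a strictly positive constant $c_0$ not depending on $k$, on $N$ (for $N$ large) or on $s$; since $e^{-C_2s^8}\to0$ as $s\to\infty$, this suffices for all $s$ large, and one may in fact take $C_2=1$. As the paper indicates, the companion bound \eqref{cranchk} is obtained by repeating the proof of Lemma \ref{cromican} verbatim, replacing $N^{3/2}$ (resp.\ $N^3$) by $k^{1/2}N$ (resp.\ $kN^2$) throughout and taking $K$ as in \eqref{defK2}: applying \eqref{crooop} to each increment $\cT_i$ (which, up to integer rounding, has the law of the hitting time $T_{k^{1/2}N2^{-i}}$) and to $\cT_\infty$ (which is stochastically dominated by $T_N$), together with the bound $A(0)=2H_0N\le2(s+r)k^{1/2}N$ holding with probability $\ge1-2e^{-cr^2}$ by \eqref{fluquetec}, gives \eqref{cranchk} after a union bound over $i$ exactly as before.

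It thus remains to argue the lower bound. First I would recall that $X_t=A(J(t))$ is a continuous-time simple symmetric random walk on $\bbZ_+$ absorbed at $0$ (each of the two jumps at rate $1/2$), so that by the strong Markov property at the time-changed instant corresponding to $\tau_K$, the random variable $\cT_\infty=\int_{\tau_K}^\tau u(t)\dd t$ has, conditionally on $A(\tau_K)$, the law of the first hitting time of $0$ by such a walk started from $A(\tau_K)$. Then I would check that $A(\tau_K)$ is of order $N$: for $s\ge1$ one has deterministically $A(0)=2H_0N\ge2s\sqrt k\,N\ge2N$ (recall \eqref{defh0}, \eqref{fryieed}), while the choice $K=\lceil\tfrac12\log_2 k\rceil$ forces $\sqrt k\le2^K<2\sqrt k$, so the threshold $k^{1/2}N2^{-K}$ lies in $(N/2,N]$; in particular $A(0)$ exceeds it, whence $\tau_K>0$, and since $A$ has nearest-neighbour increments and crosses this threshold downwards, $A(\tau_K)$ equals the deterministic integer $\lfloor k^{1/2}N2^{-K}\rfloor$, so that $N/3\le A(\tau_K)\le N$ for $N$ large.

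To conclude, I would invoke the stochastic monotonicity of $a\mapsto T_a$ together with the invariance principle for the simple random walk (the rescaled process $(X_{N^2t}/N)_{t\ge0}$ converges to a standard Brownian motion $B$):
\begin{equation*}
\begin{split}
\bbP\big[\cT_\infty>N^2\big]&\ \ge\ \bbP\big[T_{\lfloor N/2\rfloor}>N^2\big],\\
\lim_{N\to\infty}\bbP\big[T_{\lfloor N/2\rfloor}>N^2\big]&=\bbP\Big[\min_{0\le t\le1}B_t>-\tfrac12\Big]=2\Phi(\tfrac12)-1>0.
\end{split}
\end{equation*}
Hence there is an absolute constant $c_0>0$ with $\bbP[\cT_\infty>N^2]\ge c_0$ for all $N$ large, uniformly in $k$ and in $s\ge1$, so that \eqref{cranchk2} holds with $C_2=1$ for every $s\ge(-\log c_0)^{1/8}$. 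There is no genuine obstacle in this argument — the target bound being very generous — the only points demanding a little care being the two-sided control $N/3\le A(\tau_K)\le N$, which is precisely where the definition \eqref{defK2} of $K$ is used, and the fact that the Brownian limit above is uniform in $k$.
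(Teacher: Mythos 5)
Your handling of \eqref{cranchk} is fine and matches the paper's intention: Lemma \ref{cromican2} is stated without proof precisely because the argument of Lemma \ref{cromican} carries over verbatim once $N^{3/2}$ and $N^3$ are replaced by $k^{1/2}N$ and $kN^2$ and $K$ is taken as in \eqref{defK2}, and your summary of that adaptation is correct. The problem is with \eqref{cranchk2}. You have read the display literally and observed that the union it describes contains $\{\cT_\infty>N^2\}$, whose probability is bounded below by an absolute constant by the invariance principle. That observation is true, but it proves a statement that is vacuous for the role the lemma plays. The display \eqref{cranchk2} is a misprint: compare it with its $k=N/2$ counterpart, Lemma \ref{cromican3}, which asserts
\begin{equation*}
\bbP\left[ \left \{\forall  i\in \{0,\dots, K\},\ \cT_i\le s^{-7} N^3 3^{-i}\right\} \cap \{\cT_\infty\le  s^{-1}N^2 \}\right]\ge e^{-Cs^{9}},
\end{equation*}
and with the way \eqref{cranchk2} is invoked at the end of Section \ref{multis}, namely to produce $\bbP[\cA'\cap\cB'\cap\cC']\ge e^{-Cs^{\cdot}}$ where $\cC'=\{\forall i,\ \cT_i\le s^{-7}N^2k3^{-i}\}\cap\{\cT_\infty<s^{-1}N^2/2\}$. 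What is needed is a lower bound of order $e^{-Cs^{8}}$ on the probability that \emph{all} the $\cT_i$ are simultaneously small \emph{and} $\cT_\infty$ is small, i.e.\ on the intersection of the complementary events. A lower bound on the probability of the union of the bad events gives no information whatsoever about $\bbP[\cC']$, so your argument cannot feed into the proof of \eqref{nahnou2} for general $k$.

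The intended proof is the one of Lemma \ref{cromican3}, transposed: the increments $\cT_i$ are independent by the strong Markov property of the time-changed walk $X_t=A(J(t))$; up to rounding, $\cT_i$ has the law of the hitting time $T_{2^{-i}k^{1/2}N}$ and $\cT_\infty$ is dominated by $T_{N}$; the lower bound \eqref{crooops}, $\bbP[T_a\le a^2/v]\ge e^{-C_1\max(v^{1/2},v)}$, is applied to each factor, the dominant contribution $e^{-Cs^{8}}$ coming from $\cT_0$ after conditioning on $\{A(0)\le 4s\sqrt{k}N\}$ via \eqref{grimic}, and the individual bounds are then multiplied using independence (the sum over $i\ge1$ of the exponents being controlled as in \eqref{okafor}). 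None of this appears in your proposal, so the substantive half of the lemma --- the only half that is actually used in the sequel --- is not established. Your side computations (the value of $A(\tau_K)$, the Brownian limit for $\bbP[\cT_\infty>N^2]$) are correct but do not address this.
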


\begin{lemma}\label{coniduam3}
Recall $\alpha:=(\sum_{i\ge 0} (i_0+1)^{-2})^{-1}.$
For any $\gep>0$, exists a constant $C_1(\gep)$ such that 
for any $T\ge 0$

\begin{equation}\label{crimouk}
\bbP\left[ \exists i\in \{0,\dots, K\}, \int^T_0 \ind_{\{ H(t)\ge C_1 (4/3)^i \sqrt{k}\}}\dd t \ge (\alpha/4) (i+1)^{-2}T \right] \le \gep.
\end{equation}
Moreover there exist a constant $C_2$ such that for any $T\ge 0$,
\begin{equation}\label{crimouk2}
\bbP\left[ \exists i\in \{0,\dots, K\}, \int^{T}_0 \ind_{\{ H(t)\ge s^{5}(4/3)^i \sqrt{k}\}}\dd t \ge (\alpha/4) (i+1)^{-2}N^2 \right]\dd t \le e^{-C_2 s^{10}}.
\end{equation}

\end{lemma}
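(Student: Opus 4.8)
The plan is to follow the proof of Lemma~\ref{coniduam} (resp. Lemma~\ref{coniduam2}) essentially verbatim, the only new input being the $k$-particle analogue of the concentration estimate of Lemma~\ref{cromeski}. So the first step I would carry out is to record that there is a constant $c$ with $\bbP[H(t)\ge r\sqrt k]\le 2\exp(-cr^2)$ for all $t,r\ge0$. This is immediate: by construction $\grad\xi^1_t$ and $\grad\xi^2_t$ are distributed according to $\mu$ for every $t$, and the oscillation of a height function is exactly the density fluctuation of the underlying configuration --- from $\grad\xi(z)=\xi(z)-\xi(z-1)+k/N$ one gets $\xi(x)-\xi(y)=\sum_{z=y+1}^x(\grad\xi(z)-k/N)$, so $H(\xi^i_t)=\max_{x,y}\bigl|\sum_{z=x+1}^y(\grad\xi^i_t(z)-k/N)\bigr|$, whose law under $\mu$ is controlled by \eqref{fluquetec}. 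Since $H(t)=H(\xi^1_t)+H(\xi^2_t)$, splitting $\{H(t)\ge r\sqrt k\}\subset\{H(\xi^1_t)\ge (r/2)\sqrt k\}\cup\{H(\xi^2_t)\ge (r/2)\sqrt k\}$ and applying \eqref{fluquetec} twice gives the claim after renaming $c$.

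The second step is a first-moment bound, carried out for each fixed $i$. Writing $\lambda_i:=C_1(4/3)^i$ for \eqref{crimouk} (resp. $\lambda_i:=s^5(4/3)^i$ for \eqref{crimouk2}), Fubini's theorem together with the estimate above gives
\begin{equation}
\bbE\left[\int_0^T\ind_{\{H(t)\ge\lambda_i\sqrt k\}}\dd t\right]=\int_0^T\bbP\left[H(t)\ge\lambda_i\sqrt k\right]\dd t\le 2T\exp\left(-c\lambda_i^2\right),
\end{equation}
and since the occupation time is nonnegative, Markov's inequality at the level $(\alpha/4)(i+1)^{-2}T$ yields
\begin{equation}
\bbP\left[\int_0^T\ind_{\{H(t)\ge\lambda_i\sqrt k\}}\dd t\ge(\alpha/4)(i+1)^{-2}T\right]\le\frac{8}{\alpha}(i+1)^2\exp\left(-c\lambda_i^2\right).
\end{equation}

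The third step is a union bound over $i\in\{0,\dots,K\}$; since all terms are positive I would extend the sum to $i\ge0$, which makes the resulting bound uniform in $N$ (hence in $K$). Here $\lambda_i^2$ grows geometrically, $\lambda_i^2=C_1^2(16/9)^i$ (resp. $s^{10}(16/9)^i$), so the series $\sum_{i\ge0}(i+1)^2\exp(-c\lambda_i^2)$ converges and, up to a universal factor, is of the size of its $i=0$ term. For \eqref{crimouk} that term is $\exp(-cC_1^2)$, which is made $\le\alpha\gep/8$ by choosing $C_1=C_1(\gep)$ large; for \eqref{crimouk2} the $i=0$ term equals $\exp(-cs^{10})$, and for $s$ large enough the whole sum is $\le e^{-C_2s^{10}}$. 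I expect the only delicate point to be exactly this trade-off: the polynomial prefactor $(i+1)^2$ produced by the Markov step competes with a decay that is stretched-exponential in $i$, but the geometric growth of $\lambda_i^2$ dominates the polynomial, so the tail is negligible and the estimate is governed entirely by the $i=0$ term, uniformly in $T$ and $N$. (Here the right-hand side of \eqref{crimouk2} is read with the occupation-time level $(\alpha/4)(i+1)^{-2}T$, in parallel with Lemma~\ref{coniduam2}.)
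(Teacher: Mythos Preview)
Your proposal is correct and follows exactly the approach the paper intends: the paper states that the proofs of Lemma~\ref{cromican2} and Lemma~\ref{coniduam3} ``are exactly the same'' as those of Lemma~\ref{cromican} and Lemma~\ref{coniduam} (resp.\ Lemma~\ref{coniduam2}) ``and thus are not included'', and you have faithfully reconstructed that argument, correctly replacing $\sqrt N$ by $\sqrt k$ via \eqref{fluquetec} and noting that the geometric growth $\lambda_i^2=C_1^2(16/9)^i$ now dominates the polynomial prefactor $(i+1)^2$ in the union bound. Your parenthetical remark about reading the level in \eqref{crimouk2} as $(\alpha/4)(i+1)^{-2}T$ rather than $(\alpha/4)(i+1)^{-2}N^2$ is also the right call, matching Lemma~\ref{coniduam2} and the way the estimate is actually used downstream.
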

A significant modification is however needed for Lemma \ref{smalla}, as for small $k$, 
we cannot define an event similar to \eqref{defca} which holds with high probability.
Set (recall \eqref{defj})
\begin{multline}
\mathfrak{A}:=\left\{ \xi \in \gO_{k,N} \ | \ \#[x,y]\ge N/k (\log k)^2 \Rightarrow j(x,y,\xi)\ge \frac{k}{10N}\#[x,y]\right\}\\
\cup \left\{ \xi \ | \ \#[x,y]\le N/k (\log k)^2  \Rightarrow |\xi(x)-\xi(y)|\le (\log k)^4\right\}=:\mathfrak{A}_1\cup \mathfrak{A}_2.
\end{multline}
Note that $\kA$ is invariant by vertical translation of $\xi$ and thus only depends on $\grad \xi$.
Hence we can (improperly) consider it as a subset of $\gO_{N,k}$.
 \begin{lemma}\label{smalla2}
We have
\begin{equation}
\mu(\mathfrak{A})\le \frac{1}{k^2},
\end{equation}
as a consequence one has for every $T\ge 0$
\begin{equation}
\bbP\left[ \int^T_0 \ind_{\{ \xi_1(t)\in \mathfrak A \}}\dd t \ge (T/k) \right]\le 1/k
\end{equation}
\end{lemma}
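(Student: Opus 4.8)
The time-integral assertion is the easy half and I would dispose of it first: since $\mu$ is stationary for the $\grad\xi$–dynamics and (as already observed) $\mathfrak{A}$ depends only on $\grad\xi$, the variable $\grad\xi^1_t$ has law $\mu$ for every $t\ge 0$, so $\bbE\big[\int_0^T\ind_{\{\grad\xi^1_t\in\mathfrak{A}\}}\dd t\big]=T\mu(\mathfrak{A})\le T/k^2$, and Markov's inequality gives $\bbP\big[\int_0^T\ind_{\{\grad\xi^1_t\in\mathfrak{A}\}}\dd t\ge T/k\big]\le 1/k$. Thus everything reduces to the bound $\mu(\mathfrak{A})\le 1/k^2$, where $\mathfrak{A}=\mathfrak{A}_1\cup\mathfrak{A}_2$ is the exceptional event that some long interval is starved of flippable corners ($\mathfrak{A}_1$) or some short interval carries an anomalously large height increment ($\mathfrak{A}_2$). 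As in \eqref{groom} I would first replace $\mu$ by the product measure $\tilde\mu$ on $\{0,1\}^{\bbZ_N}$ making the $\eta(x)$ i.i.d.\ Bernoulli$(k/N)$: the local central limit theorem for $\sum_x\eta(x)$, whose $\tilde\mu$–variance is $k(1-k/N)\ge k/2\to\infty$, yields $\mu(E)=\tilde\mu\big(E\mid\sum_x\eta(x)=k\big)\le C\sqrt k\,\tilde\mu(E)$ for every event $E$. It therefore suffices to prove $\tilde\mu(\mathfrak{A}_1)+\tilde\mu(\mathfrak{A}_2)\le c\,k^{-5/2}$.

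\textbf{The event $\mathfrak{A}_1$.} The crucial idea is to shrink the union bound from all $O(N\ell_0)$ subintervals to polynomially-in-$k$ many reference blocks. Set $\ell_0:=\lceil (N/k)(\log k)^2\rceil$ and fix a partition of $\bbZ_N$ into $\lceil N/\ell_0\rceil=O\!\big(k/(\log k)^2\big)$ consecutive blocks of length of order $\ell_0$. A one-line averaging argument shows that if some interval of length $\ge 2\ell_0$ has fewer than $\tfrac{k}{10N}$ times its length corners, then one of these fixed blocks has fewer than, say, $\tfrac{k}{5N}\ell_0=\tfrac15(\log k)^2(1+o(1))$ corners (the residual intervals of length in $[\ell_0,2\ell_0)$ being absorbed into the constants, or handled via the trivial bound of at least one corner per bubble together with $\mathfrak{A}_2$). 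On a fixed block the corner indicators $\ind_{\{\eta(z)\ne\eta(z+1)\}}$ form a stationary $1$–dependent sequence of Bernoulli$\big(2\tfrac kN(1-\tfrac kN)\big)$ variables with success probability at least $k/N$; splitting into even and odd indices to recover genuine independence and applying a Chernoff bound for a lower deviation down to a fixed fraction of the mean, which is of order $(\log k)^2$, gives a probability at most $e^{-c(\log k)^2}$ per block. A union bound over the $O(k)$ blocks then yields $\tilde\mu(\mathfrak{A}_1)\le k\,e^{-c(\log k)^2}\le e^{-c'(\log k)^2}$ for $k$ large.

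\textbf{The event $\mathfrak{A}_2$.} Here $|\xi(x)-\xi(y)|$ is exactly the number of particles in $[x,y]$ minus its mean $(k/N)\#[x,y]\le(\log k)^2$; since this mean is far below $(\log k)^4$, an interval of length $\le\ell_0$ with $|\xi(x)-\xi(y)|>(\log k)^4$ must contain more than $(\log k)^4$ particles, hence one of the fixed blocks above contains more than $\tfrac12(\log k)^4$ particles. The number of particles in a block of length $\ell_0$ is Binomial$(\ell_0,k/N)$ with mean of order $(\log k)^2$, so the elementary upper tail $\binom{\ell_0}{m}(k/N)^m\le \big(e\ell_0 k/(Nm)\big)^m$ with $m=\tfrac12(\log k)^4$ gives a probability at most $\big(2e(\log k)^{-2}\big)^{(\log k)^4/2}\le e^{-(\log k)^4/2}$ for $k$ large; a union bound over the $O(k)$ blocks gives $\tilde\mu(\mathfrak{A}_2)\le k\,e^{-(\log k)^4/2}$. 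Combining, $\mu(\mathfrak{A})\le C\sqrt k\big(e^{-c'(\log k)^2}+k\,e^{-(\log k)^4/2}\big)\le k^{-2}$ for all sufficiently large $k$ (hence $N$), which is the claim.

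\textbf{Main obstacle.} The substantive point is the calibration of the scales against the union bound. The length threshold $(N/k)(\log k)^2$ is chosen precisely so that the partition into blocks of that size has only $O\!\big(k/(\log k)^2\big)$ blocks — polynomial in $k$ — while each block still carries of order $(\log k)^2$ expected corners, so that the $e^{-\Theta((\log k)^2)}$ deviation estimate comfortably beats the number of blocks; a noticeably larger threshold would make the reference intervals too coarse to feed the lower bound on $u(t)$ downstream, while a smaller one would make the union bound fail for $k$ small relative to $N$. The two other steps requiring care, both routine once set up, are the reduction of ``some long interval is bad'' to ``some fixed reference block is bad'' (in particular the treatment of intervals of length between $\ell_0$ and $2\ell_0$), and the $1$–dependence of the corner indicators, handled by the even/odd splitting or a standard Chernoff bound for $1$–dependent sequences; the equivalence-of-ensembles passage from $\mu$ to $\tilde\mu$ is exactly the estimate already used in \eqref{groom}.
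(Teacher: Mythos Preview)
Your proposal is correct and follows essentially the same route as the paper: deduce the time-integral bound from stationarity plus Markov's inequality, pass from $\mu$ to the product Bernoulli$(k/N)$ measure $\tilde\mu$ via the local CLT at the cost of a factor $C\sqrt{k}$, then control $\tilde\mu(\mathfrak A_1)$ and $\tilde\mu(\mathfrak A_2)$ separately by partitioning $\bbZ_N$ into $O(k/(\log k)^2)$ blocks of length $\sim (N/k)(\log k)^2$, applying Chernoff bounds (with the even/odd split to make the corner indicators independent), and taking a union bound over the blocks. The one cosmetic difference is that the paper, to handle cleanly the edge case you flag for intervals of length in $[\ell_0,2\ell_0)$, works with half-size blocks $m'=\lceil m/2\rceil$ for $\mathfrak A_1$, so that any interval of length $\ge m$ is guaranteed to contain a full reference block; this is exactly the ``absorbed into the constants'' fix you allude to.
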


 \begin{proof}
In this proof it is somehow easier to work with the particle system, hence we let $\mu$ be the uniform measure on $\gO_{N,k}$.
We consider $\tilde \mu$ an alternative measure on $\{0,1\}^{\bbZ_N}$, under which the $\eta(x)$ are i.i.d.\ Bernoulli random variable with parameter $k/N$.
From the Local Central Limit Theorem (which in this simple case can be proved  using the Stirling Formula), there exists a constant $C_1$ such that for all choices of $k$ and $N$

\begin{equation}
\tilde \mu\left(\sum_{x\in \bbZ_N}\eta(x)=k \right)\ge \frac{1}{C_1\sqrt k}.
\end{equation}
Hence for any event $A\subset \{0,1\}^{\bbZ_N}$, we have

\begin{equation}
\mu(A)=\tilde \mu\left(A \ | \ \sum_{x\in \bbZ_N}\eta(x)=k \right)\le C_1\sqrt k \tilde \mu(A).
\end{equation}
Hence to prove the result, we just have to prove a slightly stronger upper-bound for the probability $\tilde \mu(\kA)$.
We start proving that 
\begin{equation}
\tilde \mu(\kA_2)\le \frac{1}{k^{3}}.
\end{equation}
In terms of particle,  $\kA_2$ holds
any interval of length smaller than $N/k (\log k)^2$ contains at most $(\log k)^4$ particles.
Set 
$$m_{k,N}=m:= \lceil N/k (\log k)^2 \rceil.$$
It is a standard large deviation computation (computing the Laplace transform and using the Markov inequality) to show that there exists a constant $c$ such that

\begin{equation}\label{gramnic}
\tilde \mu\left( \sum_{x=1}^m \eta(x)\ge \frac{(\log k)^4}{2}\right)\le \exp(- c (\log k)^6).
\end{equation}

Hence by translation invariance, the probability that there exists an interval of the form  $[(i-1)m+1,im]$, $i\in \{1,\dots \lceil N/m \rceil+1\}$
which contains at most $(\log k)^4/2$ particles is smaller than 
$$2 k (\log k)^2 \exp(- c (\log k)^6)\le k^{-2}.$$
As every interval of length smaller than $m$ is included in in the union of at most two intervals of the type $[(i-1)m+1,im]$ we have

$$\tilde \mu(\kA_2) \le \tilde \mu\left( \exists i \in \{1,\dots \lceil N/m \rceil+1\}, \sum_{x=(m-1)i+1}^{mi} \eta(x) \ge \frac{(\log k)^4}{2}\right)\le k^{-2}.$$
We now prove 

\begin{equation}
\tilde \mu(\kA_1)\le \frac{1}{k^{3}}.
\end{equation}

In terms of particle, having a local extremum at $x$ just corresponds to $\eta(x)\ne \eta(x+1)$.
Note that if $\cA$ occurs then, by dichotomy, there exists necessarily an interval of length comprised between $m$ and $2m$ in which the density of extrema is smaller than $k/10N$ and hence the total number of extrema in it is smaller than $\frac k {5N} m$.
Setting $m'=\lceil m/2 \rceil$ this interval must include an interval of the type
$[(i-1)m'+1,im']$, with $i\in \{1,\dots \lceil N/m' \rceil\}$, in which there are at most $k/5N m$ extrema.
Hence, noting that  
$$ \frac k {5N} m\le \frac{(\log k)^2}{5} \quad \text{ and } \lceil N/m'  \rceil\le 3 k(\log k)^2,$$
we have 
\begin{multline}\label{gromido}
\tilde \mu(\kA_1)\le \tilde \mu\left( \exists i\in \{1,\dots \lceil N/m' \rceil\} \sum_{x=(i-1)m'+1}^{im'} \ind_{\{\eta(x)\ne \eta(x+1)\}}\ge \frac{(\log k)^2}{5}\right)
\\ \le 3 k(\log k)^2\tilde \mu\left( \sum_{i=1}^{m'} \ind_{\{\eta(x)\ne \eta(x+1)\}}\ge  \frac{(\log k)^2}{5}\right).
\end{multline}
Now we remark that 
$$\tilde\mu\left(\eta(x)\ne \eta(x+1)\right)=\frac{3k(N-k)}{N^2}\ge \frac{k} N.$$
As the variables $\ind_{\{\eta(2x-1)\ne \eta(2x)\}}$ are i.i.d for $x\in \{1, \lceil m'/2 \rceil\}$ we can use standard large deviation techniques 
for sums of i.i.d. variables and obtain that there exists a constant $c$ for which 

\begin{equation}
\tilde \mu\left( \sum_{i=1}^{m'} \ind_{\{\eta(x)\ne \eta(x+1)\}}\ge  \frac{(\log k)^2}{5} \right)\le \tilde \mu\left( \sum_{x=1}^{\lceil m'/2 \rceil} \ind_{\{\eta(2x-1)\ne \eta(2x)\}}\ge  \frac{(\log k)^2}{5}\right) \le 
e^{-c(\log k)^2}.
\end{equation}
This combined with \eqref{gromido} allows us to conclude.

\end{proof}
\begin{lemma}\label{fromzea2}
When $\xi_1(t)\in \kA$ we have 

\begin{equation}
u(t)\ge \frac{1}{10}\min\left(k, \frac{A(t)k}{N \max(H(t), (\log k)^6)}\right)
\end{equation}
\end{lemma}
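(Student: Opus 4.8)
The plan is to follow the proof of Lemma~\ref{fromzea} almost verbatim, with the microscopic scale $m:=\lceil(N/k)(\log k)^2\rceil$ playing the role that $N^{1/4}$ plays there and $(\log k)^6$ the role of $\sqrt N$. The hypothesis is used only through two properties, which I take to hold for both $i=1$ and $i=2$ (for $i=1$ by assumption, for $i=2$ because $\grad\xi^2_t$ is also distributed according to $\mu$, so Lemma~\ref{smalla2} and its proof apply verbatim): (a) every interval of length $\ge m$ contains at least $\tfrac{k}{10N}\#[x,y]$ local extrema of $\xi^i_t$; (b) every interval of length $\le m$ has oscillation at most $(\log k)^4$ for $\xi^i_t$. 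First I dispose of the case where $\xi^1_t$ and $\xi^2_t$ have no contact point: then $U_1(t)$ consists of all local extrema of $\xi^1_t$, so (a) applied to the whole circle (of length $N\ge m$ for $k$ large) gives $u(t)\ge\#U_1(t)\ge\tfrac{k}{10N}\cdot N=\tfrac k{10}$, which trivially dominates $\tfrac1{10}\min(k,\,\cdot\,)$, the right-hand side of the lemma.

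When a contact point exists I run the bubble decomposition of Lemma~\ref{fromzea}: split $C(t)$ (see~\eqref{defct}) into its connected components $[a,b]$ and, for each such bubble, use $u_{[a,b]}(t)=j(a,b,\xi^1_t)$ and $A_{[a,b]}(t)$ as in~\eqref{defuab} and~\eqref{bubarea}. By~\eqref{lesum} it suffices to prove, for each bubble,
\[
u_{[a,b]}(t)\ \ge\ \frac{A_{[a,b]}(t)\,k}{10\,N\,\max\!\big(H(t),(\log k)^6\big)},
\]
and sum. Because $\xi^1_t\le\xi^2_t$ everywhere and a global contact point exists, $\xi^2_t(x)-\xi^1_t(x)\le H(\xi^1_t)+H(\xi^2_t)=H(t)$ for every $x$ (recall~\eqref{defh} and~\eqref{defht}), hence $A_{[a,b]}(t)\le\#[a,b]\,H(t)$. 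For a long bubble, $\#[a,b]\ge m$, property (a) gives $u_{[a,b]}(t)\ge\tfrac{k}{10N}\#[a,b]\ge\tfrac{k}{10N}A_{[a,b]}(t)/H(t)$, which is the required inequality. For a short bubble, $\#[a,b]<m$, the endpoints $a$ and $b$ are contact points (since $a-1,b+1\notin C(t)$ forces $\xi^1_t=\xi^2_t$ there), so for $x\in[a,b]$,
\[
0\ \le\ \xi^2_t(x)-\xi^1_t(x)\ =\ \big(\xi^2_t(x)-\xi^2_t(a)\big)-\big(\xi^1_t(x)-\xi^1_t(a)\big)\ \le\ 2(\log k)^4,
\]
where the last inequality applies property (b) on the short interval $[a,x]$ to $\xi^2_t$ and to $\xi^1_t$. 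Thus $A_{[a,b]}(t)\le 2(\log k)^4\,\#[a,b]<2m(\log k)^4\le 4(N/k)(\log k)^6$, so the right-hand side of the displayed bound is $<\tfrac4{10}<1$; on the other hand every bubble contains at least one flippable corner of $\xi^1_t$ — the one at the site where $\min_{x\in[a,b]}\xi^1_t(x)$ is attained, which is an interior local minimum by the same elementary argument as in Lemma~\ref{fromzea} — so $u_{[a,b]}(t)\ge1$ and the inequality holds. Summing over bubbles and combining with the no-contact case gives the lemma.

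The point that needs genuine care is the short-bubble estimate: the crude Lipschitz bound $A_{[a,b]}(t)\le\#[a,b]^2/2$ used in Lemma~\ref{fromzea} is far too weak here (it would only suffice when $N\lesssim k(\log k)^2$), so one must really use the oscillation bound (b), and — the one place where the statement glosses over something — one needs it for \emph{both} $\xi^1_t$ and $\xi^2_t$, which is legitimate precisely because $\grad\xi^2_t\sim\mu$ as well and the event of Lemma~\ref{smalla2} is a function of the gradient only. The remaining ingredients are routine: the contact-boundary identities $\xi^1_t(a)=\xi^2_t(a)$ and $\xi^1_t(b)=\xi^2_t(b)$, the ``at least one flippable corner per bubble'' fact (verified exactly as in Lemma~\ref{fromzea}), and the tracking of the constant $\tfrac1{10}$, which is inherited unchanged from the $\tfrac k{10N}$ in the definition of $\mathfrak A_1$.
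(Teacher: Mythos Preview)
Your argument has a genuine gap in the short-bubble case. The lemma is a \emph{deterministic} statement with the sole hypothesis $\xi^1_t\in\mathfrak A$; you cannot invoke ``$\grad\xi^2_t\sim\mu$, so Lemma~\ref{smalla2} applies'' to conclude that $\xi^2_t\in\mathfrak A$ as well. That is a probabilistic fact about typical configurations, not something true on the event $\{\xi^1_t\in\mathfrak A\}$. Your short-bubble bound $\xi^2_t(x)-\xi^1_t(x)\le 2(\log k)^4$ therefore uses an assumption that is simply not available, and you yourself flag this (``the one place where the statement glosses over something''). But the statement does not gloss over anything: the lemma is correct as written, with the hypothesis on $\xi^1_t$ only.

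The paper avoids your extra assumption by exploiting the \emph{asymmetry} of the increments of any height function: since $\xi(x)-\xi(x-1)\in\{-k/N,\,1-k/N\}$, every $\xi\in\Omega'_{N,k}$ satisfies $\xi(x)\le\xi(b)+\frac{k}{N}\#[x+1,b]$ and $\xi(x)\ge\xi(a)-\frac{k}{N}\#[a+1,x]$. Applying these to $\xi^2_t$ and $\xi^1_t$ respectively, and using only the contact identities $\xi^2_t(a)=\xi^1_t(a)$, $\xi^2_t(b)=\xi^1_t(b)$, one gets
\[
\xi^2_t(x)-\xi^1_t(x)\ \le\ \big(\xi^1_t(b)-\xi^1_t(a)\big)+\frac{k}{N}\#[a,b]\ \le\ (\log k)^4+(\log k)^2\ \le\ 2(\log k)^4,
\]
where the bound on $\xi^1_t(b)-\xi^1_t(a)$ uses $\mathfrak A_2$ for $\xi^1_t$ only. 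This is exactly the missing idea: the crude two-sided Lipschitz bound (with constant $1-k/N$) is too weak, but the one-sided bound (with the small constant $k/N$) is enough. With this fix your proof goes through; the long-bubble and no-contact cases are fine as written.
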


\begin{proof}
If $\xi^1_t\in \kA$ and $\xi^2_t(x)>\xi^1_t(x)$, for all $x\in \bbZ_N$, then all corners of $\xi^1_t$ give a contribution to $u(t)$ and from the assumption 
$\xi_1(t)\in \kA$ there are at least $k/10$ of them.

\medskip

If there are some contacts between $\xi^1_t$ and $\xi^2_t$, the idea of the proof is to control the contribution to the area and to $u(t)$ of each bubble.
Recall \eqref{defct} and \eqref{bubarea}.
Assume that the interval $[a,b]$ with $\#[a,b]\le N/k (\log k)^2$ is a bubble. It has at least one flippable corner.
For any $x\in [a,b]$, $i=1,2$ we have 

\begin{equation}
 \xi^1_t(a)-\frac{ k\#[a+1,x]}{N}  \le   \xi^i_t(x)\le  \xi^1_t(b)+\frac{ k\#[x+1,b]}{N},
\end{equation}
and hence
\begin{equation}
\max_{x\in [a,b]} (\xi^2_t-\xi^1_t)(x)\le (\xi^1_t(b)- \xi^1_t(a))+\frac{ k\#[a,b]}{N}
\end{equation}
From the definition of $\kA$, the right-hand side is smaller than $2(\log k)^4$ and hence 
\begin{equation}
A_{[a,b]}(t)\le  2(\log k)^4\#[a,b]\le  (N/k) (\log k)^6. 
\end{equation}
And hence (recall \eqref{defuab}, \eqref{bubarea})
\begin{equation}\label{smabub}
u_{[a,b]}(t)\ge 1\ge   \frac{A_{[a,b]}(t) k}{N (\log k)^6}. 
\end{equation}
For large bubbles with $\#[a,b]\ge \frac{2N} k (\log k)^2$
\begin{equation}
A_{[a,b]}(t)\le  H(t) \#[a,b]
\end{equation}
and thus from the definition of $\kA$
\begin{equation}\label{bigbub}
u_{[a,b]}(t) \ge \frac{k}{10N} \#[a,b]\ge \frac{A_{[a,b]}(t)k}{10N H(t)}. 
\end{equation}
The lemma is the proved by summing \eqref{smabub} and \eqref{bigbub} over all bubbles. 
\end{proof}

We are now ready to combine the ingredients for the the of Proposition \ref{nahnou} for arbitrary $k$.
We prove only \eqref{nahnou1}, as \eqref{nahnou2} can also be obtained in the same manner by adapting the technique used in the case $k=N/2$. 

\begin{proof}[Proof of \eqref{nahnou1} for arbitrary $k$]

We fix a constant $C_1(\gep)$  such that \eqref{cranchk} holds for $\gep/3$ instead of $\gep$.
$C_2(\gep,s)$ is chosen so that \eqref{crimouk} holds for $\gep/3$.
\begin{equation}
\begin{split}
\mathcal A&:=\left\{\int^T_0 \ind_{\{ \xi_1(t)\notin \mathfrak A \}}\dd t \le (T/k)\right\},\\
\mathcal B&:= \left\{ \forall i\in \{0,\dots, K\}, \int^{T}_0 \ind_{\{ H(t)\ge C_1 (i+1)^2 \sqrt{k}\}}\le (\alpha/4) (i+1)^{-2}T \right\},\\
\mathcal C&:=\left \{\forall i\in \{0,\dots, K\}, \cT_i\le C_2 N^2k 3^{-i}\right\} \cap \{\cT_\infty< C_2 N^2 \},
\end{split}
\end{equation}
 where 
\begin{equation}\begin{split}
T&:=40 N^2 C_1 C_2 \alpha^{-1} \max_{i\ge 0}\left[ (i+1)^4(2/3)^i\right],\\
T'&:=T+C_2N^2.
\end{split}\end{equation}
 From our definitions one has 
 \begin{equation}
 \bbP[\cA\cap \cB\cap\cC] \ge 1- \gep.
 \end{equation} 
 Then we conclude by doing the same reasoning as in the case $k=N/2$ that
 $$\cA\cap \cB\cap\cC \subset \{\tau \le T'\} $$
 using that for $k$ sufficiently large, on the event $\cA\cap \cB$ we have
 \begin{equation}
 \forall i\in \{0,\dots, K\}, \int^{T}_0 \ind_{\{ H(t)\ge C_1 (i+1)^2 \sqrt{k} \text{ or } \xi_1(t)\notin \mathfrak A \}}\le (\alpha/2) (i+1)^{-2}T 
 \end{equation}
 Concerning \eqref{nahnou2} we set $T=s^{-1}N^2$ 
 \begin{equation}
 \begin{split}
 \mathcal A'&:=\left\{\int^{T}_0 \ind_{\{ \xi_1(t)\notin \mathfrak A \}}\dd t \le (T/k)\right\},\\
 \mathcal B'&:= \left\{ \forall i\in \{0,\dots, K\}, \int^{N^2}_0 \ind_{\{ H(t)\ge s^{6}(i+1)^2 \sqrt{k}\}}\dd t\le (\alpha/4) (i+1)^{-2}N^2 \right\},\\
\mathcal C'&:=\left \{\forall i\in \{0,\dots, K\}, \cT_i\le s^{-7} N^2k 3^{-i}\right\} \cap \{\cT_\infty< s^{-1}N^2/2 \},
\end{split}
\end{equation}
 and use \eqref{cranchk2} , \eqref{crimouk2} and \eqref{fromzea2} to have for all sufficiently large $s$,
 \begin{equation}
  \bbP[\cA'\cap \cB'\cap\cC'] \ge e^{-Cs^{-6}} 
 \end{equation}
 and conclude as in the proof for $k=N/2$ that 
  $$ \{\tau \le 2N^2\}\subset \cA'\cap \cB'\cap\cC' ,$$
 using that if $N$ (and thus $k$) is sufficiently large we have on the event $\cA'\cap\cB'$
  \begin{equation}
 \forall i\in \{0,\dots, K\}, \int^{N^2}_0 \ind_{\{ H(t)\ge s^{6} (i+1)^2 \sqrt{k} \text{ or }  \xi_1(t)\notin \mathfrak A \}}\dd t\le (\alpha/4) (i+1)^{-2}N^2.
 \end{equation}
 \end{proof}

 {\bf Acknowledgement: } The author is grateful to Fran\c{c}ois Huyveneers and Simenhaus  for enlightening discussions and in particular for bringing to his knowledge the existence of the comparison inequalities for the exclusion process  of Lemma \ref{ligett}.  
 
 \appendix

\section{Estimate for the discrete heat equation: proof of Lemma \ref{bound}}

Let 
$p_t(x,y)$ denote the heat kernel of on the discrete circle ( $p_t(x,\cdot)$ corresponds to the probability distribution of 
a simple random walk starting from $x$ at time $t$).
For fixed $x\in \bbZ_N$, the function 
$$u(y,t):=p_t(x,y)$$ 
is the solution of the discrete heat equation on $\bbZ_N$
\begin{equation}\label{disheat}
\begin{split}
\partial_t u(y,t)&=u(y+1,t)+u(y-1,t)-2u(y,t),\\
u(y,0)&:=\ind_{\{ y=x\} }.
\end{split}
\end{equation}
Without loss of generality one restrict to the case $x=0$.
The solution of the above equation can be found by a decomposition on a base of eigenvalues the discrete Laplacian.
We set for all $i\in\{1,\dots, \lfloor N/2\rfloor\}$ 
$$\gl_i:=2\left(1-\cos\big(\frac{2\pi i}{N}\big)\right),$$
to be the eigenvalue associated to the normalized eigenfunction,
$$y\mapsto \sqrt{2/N}\cos(2i\pi y/N)$$,
the factor in front being $\sqrt{1/N}$ instead of $\sqrt{2/N}$ for $i=N/2$.
We have to add also sine eigenfunctions to have a base but the projection of $\ind_{\{ y=0\}}$ on these eigenfunction is equal to zero.
By Fourier decomposition, we have
\begin{multline}
\left| u(y,t)-\frac{1}{N}\right|=\left| \frac{1}{N}\sum_{i=1}^{\lfloor N/2\rfloor}(2-\ind_{\{i=N/2\}}) e^{-\gl_i t}\cos(2i\pi y/N)\right|\\
\le \frac{2}{N} \sum_{i=1}^{\lfloor N/2\rfloor}e^{-\gl_i t}
\le \frac{2}{N}\frac{1}{e^{\gl_1 t}-1}.
\end{multline}
where in the last inequality we just used $e^{-\gl_i t}\le e^{-i\gl_1 t}$.
When $t\ge N^2$, we have $e^{\gl_1 t}\ge 3$ and hence we have

\begin{equation}\begin{split}
u(y,t)&\le \frac{2}{N},\\
\left| u(y,t)-\frac{1}{N}\right|&\le \frac{4}{N}e^{-\gl_1 t}.
\end{split}
\end{equation}
Both statements in Lemma \ref{bound} are derived from these inequalities. For the first one we need to use that 
$\bbE[\eta_t(x)]$ is a solution of the discrete heat equation. 

\qed

\end{document}